\documentclass{article}

\usepackage[english]{babel}
\usepackage{amsthm}
\usepackage[T1]{fontenc}
\usepackage[utf8]{inputenc}
\usepackage[english]{babel}
\usepackage{graphicx, color}
\usepackage{amssymb}
\usepackage{amsmath, xfrac}
\usepackage{latexsym}
\usepackage{caption, subcaption}
\usepackage{lineno}
\usepackage{comment}
\usepackage{mathtools}  
\usepackage{enumitem} 
\usepackage{hyperref}
\usepackage{cite}
\usepackage{float}
\usepackage{dsfont}
\usepackage{yhmath}
\usepackage{comment}

\newcounter{contSect} \numberwithin{contSect}{section}
 \numberwithin{contSub}{subsection}

\newtheorem{theorem}[contSect]{Theorem}
\newtheorem{corollary}[contSect]{Corollary}
\newtheorem{lemma}[contSect]{Lemma}
\newtheorem{proposition}[contSect]{Proposition}
\newtheorem{conjecture}[contSect]{Conjecture}
\newtheorem{claim}[contSect]{Claim}
\newtheorem{definition}[contSect]{Definition}
\newtheorem{observation}[contSect]{Observation}
\newtheorem{problem}{Problem}

\theoremstyle{remark}
\newtheorem*{remark}{Remark}
\DeclareMathOperator{\Cr}{cr}
\DeclareMathOperator{\Cp}{cp}

\usepackage[letterpaper,top=2cm,bottom=2cm,left=2.5cm,right=2.5cm,marginparwidth=1.75cm]{geometry}

\title{On the crossing profile of rectilinear drawings of $K_n$}
\author{Isaac Chen\footnote{Princeton Day School. \textit{Email:} isaac.chen.23@gmail.com} \text{ and} Oriol Solé-Pi\footnote{Department of Mathematics, Massachusetts Institute of Technology. \textit{Email}: oriolsp@mit.edu}}
\date{\today}

\begin{document}
\small\maketitle

\begin{abstract}
     We introduce the \textit{crossing profile} of a drawing of a graph. This is a sequence of integers whose $(k+1)^{\text{th}}$ entry counts the number of edges in the drawing which are involved in exactly $k$ crossings. The first and second entries of this sequence (which count uncrossed edges and edges with one crossing, respectively) have been studied by multiple authors. However, to the best of our knowledge, we are the first to consider the entire sequence. Most of our results concern crossing profiles of rectilinear drawings of the complete graph $K_n$. We show that for any $k\leq (n-2)^2/4$ there is such a drawing for which the $k^{\text{th}}$ entry of the crossing profile is of magnitude $\Omega(n)$. On the other hand, we prove that for any $k \geq 1$ and any sufficiently large $n$, the $k^{\text{th}}$ entry can also be made to be $0$. As our main result, we essentially characterize the asymptotic behavior of both the maximum and minimum values that the sum of the first $k$ entries of the crossing profile might achieve. Our proofs are elementary and rely mostly on geometric constructions and classical results from discrete geometry and geometric graph theory.
\end{abstract}

\section{Introduction}\label{sec:intro}

Graph drawings and their properties have been studied extensively, both from a theoretical and from a practical standpoint. Early research in this area was centered around \textit{planar graphs}, that is, graphs that can be drawn on the plane without incurring in any crossings between the edges; there many interesting questions regarding such graphs that are yet to be answered. Landmark results in the study of planar graphs include Euler's formula (which relates the number of nodes, edges and faces of a planar map), Kuratowski's theorem~\cite{kuratowski1930probleme,frink1930irreducible} (which provides a characterization of planar graphs in terms of forbidden minors), and the planar separator theorem of Lipton and Tarjan~\cite{lipton1979separator} (stating that every planar graph of order $n$ can be split into two large subgraphs by removing $O(\sqrt{n})$ nodes).

Going beyond planar graphs, one of the most natural questions one can ask is: what is the least number of crossings between edges that a drawing of a graph $G$ can have? This quantity is known as the \textit{crossing number of} $G$, and is denoted by $\Cr(G)$. Turán was the first to write about this when he posed a question (which is usually referred to as the brick factory problem) that can be stated as: What is the crossing number of the complete bipartite graph $K_{n,m}$~\cite{turan1977note}? This question has not yet been answered, and determining crossing numbers of graphs has in general proven to be a very difficult task. The literature surrounding crossing numbers is extensive, and we recommend the book by Schaefer~\cite{schaefersurvey} for an in depth treatment of the subject.  

The study of graph drawings and crossing numbers has had implications in the areas of VLSI (Very large scale integration)~\cite{VLSI,bhattleighton} and graph visualization~\cite{steele1998graph,tamassia2013handbook}. Large graphs are subjacent to many interesting phenomena that show up in all kinds of disciplines, and being able to produce visual representations of graphs which are easy to understand and convey relevant information has become a very important endeavor.

The purpose of this work is to study not the total number of crossings in a drawing, but how these crossings are distributed across the edges of the graph. More precisely, we are interested in bounding the number of edges which are involved in at most $k$ crossings and the number of edges involved in exactly $k$ crossings. As far as we know, before this paper, such questions had only really been considered in some particular cases and for very small values of $k$ (see sections~\ref{sec:e0} and~\ref{sec:crossing_sums}). We shall focus on the setting where we have a straight-line drawing of the complete graph $K_n$ with $n$ nodes (in a straight-line drawing, all edges are represented by segments that connect their endpoints), as it seems that questions of this kind are particularly rich in this case. As our main results, we present: \begin{itemize}

    \item A construction showcasing that for any $n$ and any $k\leq \frac{(n-2)^2}{4}$, the number of edges with exactly $k$ crossings might be of size linear in $n$ (Theorem~\ref{teo:lin e_k})\footnote{For $k > \left(\frac{n-2}{2}\right)^2$, it is impossible to have an edge with $k$ crossings in a straight-line drawing of $K_n$. Thus, this theorem applies to all $k$ for which an edge with $k$ crossings might exist.}.
    \item A construction showcasing that for any $k\geq 1$ and and sufficiently large $n\geq 1$, the number of edges with exactly $k$ crossings might be $0$ (Theorem~\ref{teo:0}). 
    \item Asymptotically tight upper and lower bounds for both the minimum and the maximum possible number of edges in such a drawing which are involved in at most $k$ crossings (theorems~\ref{teo:minS} and~\ref{teo:maxS}, respectively).
\end{itemize} Some related minor results are also discussed. We remark that any statement about rectilinear drawings of complete graphs can be interpreted as a statement about sets of points in general position on the plane. This motivates part of the discussion carried out in Section \ref{sec:final}.

All graphs considered in this paper are simple and finite. 

\subsection{Acknowledgments}
Most of the content of this paper was developed during the 2024 Research Science Institute (RSI) high school research program, where the first and second authors participated as student and mentor, respectively. We are grateful to Tanya Khovanova, Mihika Dusad, Victor Kolev, and an anonymous reviewer for their insightful feedback on the earliest versions of this write-up. We also thank David Jerison, Jonathan Bloom, and Tanya Khovanova for helping oversee the RSI research projects within the MIT Mathematics Department. The first author is grateful to Jenny Sendova for providing illuminating comments and endless encouragement. Finally, the authors thank RSI, the Center for Excellence in Education (CEE), and MIT for organizing and funding this research opportunity.

\section{Preliminaries and outline of the paper}\label{sec:prelim}
\subsection{Drawings and crossings}

We start off by defining drawings and rectilinear drawings of graphs.

\begin{definition}
Let $G$ be a graph. A \textit{drawing} $\mathcal D$ of $G$ is a representation of $G$ on the plane such that the vertices are mapped to distinct points and the edges are mapped to simple continuous curves that connect their respective endpoints. We further assume that no edge goes through a vertex other than at its endpoints, any two edges share at most one point\footnote[1]{Drawings where two edges are allowed to intersect multiple times are also interesting, and have been studied by multiple authors (see~\cite{fox_pach_2010,schaefersurvey} and the references therein). Removing the restriction that no two edges cross more than once would not have a significant impact on the results we present.} and are never tangent, and no three edges share an interior point.
\end{definition}

\begin{definition}
Given a graph $G$, a drawing of $G$ is said to be rectilinear if every edge is represented by a line segment that connects its endpoints.
\end{definition}

Next, we consider points other than vertices where two edges meet.

\begin{definition}
Let $\mathcal D$ be a drawing of a graph. A \textit{crossing} of $\mathcal D$ is a shared interior point of two distinct edges in $\mathcal D$.
\end{definition}

\begin{definition}
Let $G$ be a graph. The \textit{crossing number} $\Cr(G)$ is the minimum number of total crossings over all possible drawings of $G$. Similarly, the \textit{rectilinear crossing number} $\overline{\Cr}(G)$ is the minimum number of total crossings across all possible rectilinear drawings of $G$. 
\end{definition}

Note that $\overline{\Cr}(G)\geq \Cr(G).$ This inequality is strict for a wide variety of graphs (see \cite{cr4largecr} for an explicit example).

The following notable result, which is usually referred to as the crossing lemma, yields a lower bound on the crossing number in terms of the number of edges and nodes of the graph.

\begin{theorem}[Crossing lemma, Ajtai \textit{et al.} and Leighton,~\cite{crossinglemma, crossinglemma2}]
Let $G = (V, E)$ be a simple $n$-vertex graph. If $|E| \ge 4n$, then 
\[\Cr(G) \ge \frac{|E|^3}{64n^2}.\]
\end{theorem}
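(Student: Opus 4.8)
The plan is to prove a weak bound that is linear in $|E|$ and then amplify it to the stated cubic bound by a random sampling argument (the ``deletion method''). \emph{Step 1: a linear bound.} Euler's formula implies that a simple planar graph on $m \ge 3$ vertices has at most $3m-6$ edges. Hence, starting from a drawing of $G$ with exactly $\Cr(G)$ crossings and deleting one edge from each crossing, we obtain a planar graph, so $|E| - \Cr(G) \le 3n$ and therefore
\[
\Cr(G) \ge |E| - 3n .
\]
(This inequality in fact holds for every simple graph $G$, since the right-hand side is negative once $n \le 2$.) On its own this is far too weak, but it will serve as the engine for the amplification.

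\emph{Step 2: amplification.} Fix a drawing $\mathcal{D}$ of $G$ realizing $\Cr(G)$ crossings and a parameter $p \in (0,1]$ to be chosen later. Let $H$ be the random vertex-induced subgraph of $G$ obtained by keeping each vertex independently with probability $p$, drawn as in $\mathcal{D}$. By linearity of expectation $\mathbb{E}[|V(H)|] = pn$ and $\mathbb{E}[|E(H)|] = p^2|E|$, while a crossing of $\mathcal{D}$ survives in $H$ exactly when all four of its endpoints are kept, so the inherited drawing of $H$ has in expectation at most $p^4\Cr(G)$ crossings; in particular $\mathbb{E}[\Cr(H)] \le p^4 \Cr(G)$. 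Applying Step 1 to $H$ and taking expectations gives
\[
p^4 \Cr(G) \;\ge\; \mathbb{E}[\Cr(H)] \;\ge\; \mathbb{E}[|E(H)|] - 3\,\mathbb{E}[|V(H)|] \;=\; p^2|E| - 3pn ,
\]
that is, $\Cr(G) \ge |E|\,p^{-2} - 3n\,p^{-3}$ for every admissible $p$.

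\emph{Step 3: optimizing $p$.} The hypothesis $|E| \ge 4n$ is precisely what makes $p := 4n/|E|$ a legal probability ($p \le 1$). Substituting this value yields $|E|\,p^{-2} = |E|^3/(16n^2)$ and $3n\,p^{-3} = 3|E|^3/(64n^2)$, whence
\[
\Cr(G) \;\ge\; \frac{|E|^3}{16n^2} - \frac{3|E|^3}{64n^2} \;=\; \frac{|E|^3}{64n^2},
\]
as claimed. I expect the only delicate point to be this last step: one must pick $p$ that is simultaneously feasible ($p\le 1$, which is exactly where $|E|\ge 4n$ is used) and close enough to the unconstrained minimizer of $|E|\,p^{-2}-3n\,p^{-3}$ for the constant to come out as the advertised $1/64$ rather than something weaker. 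Beyond that bookkeeping, the argument uses nothing but linearity of expectation and the crude Euler-formula bound, so there is no serious obstacle.
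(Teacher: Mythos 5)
Your argument is correct and complete: the crude Euler-formula bound $\Cr(G)\ge |E|-3n$, the random vertex sample with the pointwise application of that bound to $H$, and the choice $p=4n/|E|$ (legal exactly because $|E|\ge 4n$) together give $\Cr(G)\ge |E|^3/(16n^2)-3|E|^3/(64n^2)=|E|^3/(64n^2)$, with all expectations computed correctly. One small point worth making explicit: the claim that a crossing survives precisely when all four endpoints are kept presumes that crossing edges are non-adjacent; this is automatic here because the paper's definition of a drawing requires any two edges to share at most one point (alternatively, one can pass to an optimal drawing, in which adjacent edges never cross). Note that the paper itself does not prove this theorem --- it only cites Ajtai \textit{et al.} and Leighton --- so there is no in-paper proof to compare against; your proof is the standard probabilistic ``deletion method'' amplification (usually attributed to Chazelle, Sharir, and Welzl), which is more elementary than the original counting/inductive arguments in the cited works and yields exactly the constant $1/64$ in the statement.
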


It follows immediately from this inequality that dense $n$-vertex graphs, i.e., graphs with $|E| = \Omega(n^2)$, have crossing number $\Omega(n^4)$. It is also easy to check that every $n$-vertex graph has crossing number at most $O(n^4)$. However, even the asymptotic behavior of ${\Cr}(K_n)$ and $\overline{\Cr}(K_n)$ as $n$ goes to infinity is not fully understood. The crossing and the rectilinear crossing number are possibly the two most widely studied parameters in geometric graph theory. The monograph by Schaefer~\cite{schaefersurvey} provides a comprehensive overview of the existing literature on crossing numbers of graphs.

\subsection{The crossing profile}

We wish to study how the crossings in a drawing are distributed amongst the edges of the gaph. Towards this end, we introduce the \textit{k-crossing index} and the \textit{crossing profile} of a drawing.

\begin{definition}
Let $\mathcal D$ be a drawing of a graph. For $k \ge 0$, define the \textit{$k$-crossing set} $E_k(D)$ as the set of edges in of $G$ that are part of exactly $k$ crossings in $\mathcal D$. Additionally, the \textit{$k$-crossing index} $e_k(\mathcal D) = |E_k(\mathcal D)|$ is the number of edges in $\mathcal D$ that are crossed $k$ times.
\end{definition}

\begin{definition}
Let $\mathcal D$ be a drawing of a graph. The \textit{crossing profile} of $\mathcal D$ is the sequence 
\begin{align}
\Cp(\mathcal D) = (e_0(\mathcal D), e_1(\mathcal D), \ldots). \nonumber
\end{align} 
\end{definition}

To the best of our knowledge, the crossing profile has not been studied in such generality before. However, it is clear that some classical problems in geometric graph theory can be phrased in terms of crossing profiles. Observe, for example, that a graph is $k$-planar\footnote{A graph is $k$\textit{-planar} if and only if it admits a drawing where each edge is involved in no more than $k$ crossings.} if and only if it admits a drawing $\mathcal D$ such that all non-zero terms of $\Cp(\mathcal D)$ are among its first $k+1$ entries.

Instead of analyzing the individual entries of the crossing profile, we can also consider the set of edges that are involved in at most $k$ crossings, which is equivalent to looking at the leftmost partial sums of this sequence.

\begin{definition}
Let $\mathcal D$ be a drawing of a graph. The \textit{$k$-crossing sum} $S_k(\mathcal D)$ is the number of edges in $\mathcal D$ that are crossed at most $k$ times. Equivalently, $S_k(\mathcal D) = \sum\limits_{i = 0}^{k} e_k(\mathcal D)$.
\end{definition}

For fixed $k$, we denote by $\max{e_k}(G)$ and $\min{e_k}(G)$ the maximum and minimum values, respectively, attained by $e_k(\mathcal D)$ across all drawings of $G$. Similarly, we denote by $\overline{\max}\ {e_k}(G)$ and $\overline{\min}\ {e_k}(G)$ the maximum and minimum values attained by $e_k({\mathcal{D}}(G))$ over all rectilinear drawings of $G$. The parameters $\max{S_k}(G)$, $\min{S_k}(G)$, $\overline{\max}\ {S_k}(G)$, and $\overline{\min}\ {S_k}(G)$ are defined analogously.

\subsection{Previous results regarding the \texorpdfstring{$0$}{TEXT}-crossing index}\label{sec:e0}

Previous explorations of $i$-crossing indices focused on the number of edges with $0$ crossings within both general and rectilinear drawings of complete graphs. In 1963, Ringel discovered a tight upper bound on $\max e_0(K_n)$.

\begin{theorem}[Ringel,~\cite{ringel1964extremal}]\label{Ringel}
For all $n \ge 4$, we have that $\max e_0(K_n) = 2n-2$.
\end{theorem}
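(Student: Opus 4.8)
The plan is to prove the tight bound $\max e_0(K_n) = 2n-2$ by establishing the upper bound and exhibiting a matching construction. For the construction, I would place the $n$ points so that many edges lie on the convex hull or are ``extremal'' in a way that forces them to be uncrossed: for instance, take a convex polygon on most of the points — giving $n$ hull edges, each uncrossed — and then argue one can add more uncrossed edges (the ``short diagonals''), or better, use a recursive/nested configuration. Actually the cleanest construction achieving $2n-2$ is likely a ``double circle'' or a convex polygon together with a carefully chosen interior structure; one should verify that exactly $2n-2$ edges end up crossing-free. I would present the construction first, count its uncrossed edges, and confirm the count is $2n-2$ for all $n \ge 4$.

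For the upper bound, the key observation is that the subgraph $H$ of $K_n$ consisting of all uncrossed edges, drawn with the inherited rectilinear drawing, is itself plane (no two of its edges cross), hence planar. A planar graph on $n$ vertices has at most $3n-6$ edges, which is weaker than what we want, so I need a sharper structural fact: the uncrossed edges of a rectilinear drawing of $K_n$ cannot contain a triangle that has a point of the configuration in its interior, and more importantly they form a planar graph in which every bounded face, if it is a triangle, must be an ``empty'' triangle — and one can show such a graph is sparse. The standard approach (this is Ringel's / Harborth's style argument) is: if $uv$ is an uncrossed edge, then all other $n-2$ points lie strictly on one side of the line $uv$, or the edge $uv$ together with the positions of the remaining points is very constrained; carefully, an uncrossed edge must be an edge of the convex hull of some subset, and a counting argument over the ``levels'' of the point set shows at most $2n-2$ uncrossed edges. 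Concretely, I would root the argument at a convex hull vertex $p$: the edges from $p$ partition into angular order, and at most two of the edges incident to $p$ can be uncrossed unless... — this needs care, so I would instead use the global argument that the plane graph of uncrossed edges has all its interior faces of size $\ge 3$ with no interior vertices inside triangular faces, and apply Euler's formula with the refinement that triangular faces are empty to get $|E| \le 2n - 2$.

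The main obstacle will be the upper bound, specifically ruling out that the plane graph of uncrossed edges gets close to $3n-6$ edges. The resolution is the empty-triangle restriction: in a rectilinear drawing of $K_n$, if three uncrossed edges bound a triangle $T$ containing another vertex $w$ in its interior, then $w$ is joined to each of $T$'s three vertices by segments lying inside $T$, and one checks these segments cannot all avoid crossings with something — more precisely, $T$ cannot be a face of the uncrossed-edge graph while containing $w$, since the edges incident to $w$ must exit $T$ and cross its sides, so $T$'s sides would be crossed, contradiction. Hence every bounded triangular face of the uncrossed graph is empty of vertices; combined with the fact that empty triangles are limited (or directly: contract each empty triangular face / use that a maximal plane graph with all bounded faces empty triangles on $n$ points in general position has exactly $2n-2$... — here one can invoke that such a triangulation would be a triangulation of the point set, which has $2n - 2 - h$ triangles and $3n - 3 - h$ edges where $h$ is the hull size, still too many), I realize the truly correct argument must instead show directly that at each vertex at most... — so the honest plan is: follow Ringel's original counting, or reduce to showing the uncrossed-edge graph is \emph{quasi-planar in a restricted sense} and extremal graphs are exactly the ``maximal'' configurations, verifying the constant $2n-2$ by the face-counting with the empty-triangle condition applied to the \emph{outer} face as well (the outer face is a convex polygon on $\ge 3$ vertices, not a triangle in general), which is where the improvement from $3n-6$ down to $2n-2$ comes from. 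I expect to spend most of the effort making this last counting rigorous and handling small cases $n = 4, 5$ by hand.
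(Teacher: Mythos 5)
You should first note that the paper does not prove this statement at all: it is quoted from Ringel's 1964 paper (with the rectilinear analogue attributed to K\'arolyi--Welzl and illustrated by the configuration in Figure~\ref{fig1}), so there is no in-paper proof to match, and your proposal has to stand on its own. As written, it has genuine gaps on both sides of the equality. On the construction side you never commit to a configuration or verify a count. The concrete candidate you name does not work: for $n$ points in convex position the only uncrossed edges of the complete graph are the $n$ hull edges, since a ``short diagonal'' $\overline{P_iP_{i+2}}$ is crossed by each of the $n-3$ edges joining $P_{i+1}$ to the remaining vertices; so ``convex polygon plus short diagonals'' yields $n$ uncrossed edges, not $2n-2$. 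Reaching $2n-2$ requires a genuinely different point configuration (such as the one in Figure~\ref{fig1}, extended to general $n$), and checking that exactly $2n-2$ edges are uncrossed there is a real step that your sketch leaves undone.

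On the upper bound, your empty-triangle observation is correct, and it even survives in the topological setting (for $n\ge 5$, if three uncrossed edges bounded a region containing a vertex $w$, any edge from $w$ to a vertex outside that region must cross one of the three, by a Jordan-curve argument). But, as you partly concede, it does not close the argument: a plane graph on $n$ points in which every bounded face is an empty triangle can have as many as $3n-6$ edges (a triangulation of the point set with triangular hull satisfies both of your structural conditions), so Euler's formula refined by ``triangular faces are empty'' cannot by itself give $2n-2$. What is missing is an argument that uses the full hypothesis that \emph{every} edge of $K_n$, not only the uncrossed ones, avoids crossing each edge you wish to count; your text ends by deferring to ``Ringel's original counting'' without supplying it, which is precisely the content of the theorem. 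A further caution: the statement concerns $\max e_0(K_n)$ over arbitrary drawings, so any step phrased in terms of segments, convex hulls of subsets, or levels proves at best the rectilinear bound $\overline{\max}\ e_0(K_n)\le 2n-2$, which is weaker than what is claimed. In its current form the proposal is a plan with the two decisive steps --- a verified extremal construction and a complete counting argument valid for topological drawings --- still open.
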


As noted by Károlyi and Welzl~\cite{karolyi2001crossing}, $\overline{\max}\ e_0(K_n)$ is also $2n-2$. Figure~\ref{fig1} depicts a rectilinear drawing of $K_n$ attaining this value for $n = 7$. This construction can readily be extended to all $n \geq 4$.
\begin{figure}[H]
    \centering
    \includegraphics[scale=0.4]{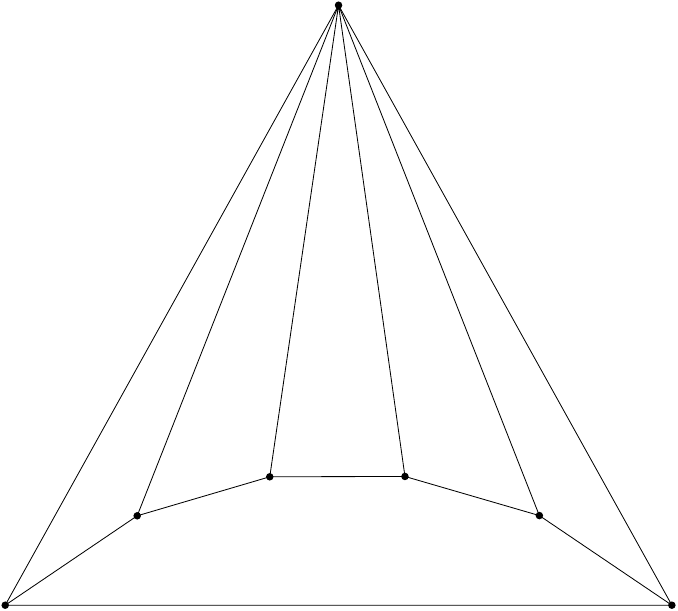}
    \caption{The positions of the vertices in a rectilinear drawing of $K_7$ with $12$ edges that are involved in no crossings.}
    \label{fig1}
\end{figure}

Looking at the other direction, Harborth and Mengersen found the exact values of $\min e_0(K_n)$ for all $n$.

\begin{theorem}[Harborth and Mengersen,~\cite{harborth1974edges}] 
The values of $\min e_0(K_n)$ for $n \ge 2$ are the ones displayed in Table~\ref{tbl:min(e_0)values}.
\end{theorem}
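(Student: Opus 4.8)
The plan is to establish the table one value of $n$ at a time, by proving a matching upper and lower bound on $\min e_0(K_n)$; the two directions have a rather different flavor. For the upper bounds the workhorse is the convex drawing of $K_n$, whose only crossing-free edges are the $n$ sides of the convex hull, so that $\min e_0(K_n)\le n$; for the values of $n$ where the entry in the table is smaller than $n$, one produces drawings by hand, typically by taking a near-optimal drawing and then rerouting or repositioning a bounded number of vertices so as to force a crossing onto each edge that was previously crossing-free, while checking that no new crossing-free edges are created. Certifying that an explicit drawing has the claimed crossing profile is a finite verification, so this side of the argument is a matter of finding the right pictures.

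The lower bounds carry the weight of the theorem. The object to study is the \emph{crossing-free subgraph} $H$ of a given good drawing $\mathcal D$ of $K_n$, consisting of those edges involved in no crossing; since these edges are pairwise noncrossing, $H$ comes with a planar embedding, so a priori $e_0(\mathcal D)=|E(H)|\le 3n-6$. An adversary wanting $e_0$ small must make $H$ sparse, and this is tightly constrained: whenever $uv$ is a crossed edge, it is crossed by some edge $f$ that shares no endpoint with $uv$ (by the good-drawing axioms), and $f$ is then itself a crossed edge; moreover a good drawing of $K_4$ has at most one crossing, so among any four vertices at most one of the three pairs of opposite edges can cross. Combining these facts lets one exclude, for each small $n$, every candidate sparse graph on $n$ vertices from being the crossing-free subgraph of a good drawing. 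In practice the analysis is organized around structural lemmas about good drawings of $K_n$ --- for instance that $H$ is connected and spanning, that every vertex meets at least two crossing-free edges, or that certain small subdrawings force extra crossing-free edges --- together with an exhaustive search over the finitely many combinatorially distinct good drawings of $K_n$ for the smallest values of $n$.

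The hard part is exactly this lower-bound bookkeeping. Unlike the rectilinear case, where the convex hull instantly supplies three crossing-free edges and geometric convexity arguments give much more, a general good drawing has no geometry to exploit, so everything must be controlled combinatorially through the rotation system and the crossing pattern, and the space of good drawings of $K_n$ is large and flexible. The delicate step is to show that $H$ cannot be made too sparse without producing a \emph{conflict} --- a pair of independent crossed edges each of which is the only possible crossing partner of the other, which the $K_4$ fact forbids --- and then to upgrade such unavoidable local conflicts into a lower bound valid uniformly over all drawings of $K_n$. It is presumably because this obstruction behaves somewhat irregularly for small $n$ that the answer is recorded as an explicit table rather than as a single closed-form expression.
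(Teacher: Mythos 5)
First, a point of calibration: the paper does not prove this statement at all --- it is quoted verbatim from Harborth and Mengersen~\cite{harborth1974edges} as background, so there is no internal proof to compare your attempt against. Judged on its own, your text is a strategy outline rather than a proof, and the gap is exactly where the content of the theorem lives. On the upper-bound side you say that for each $n$ one ``produces drawings by hand,'' but you never produce them; the whole point of the table is the specific values, e.g.\ exhibiting a good drawing of $K_6$ with only $3$ uncrossed edges, of $K_7$ with only $2$, and of $K_8$ in which \emph{every} edge is crossed (which is what forces the entry $0$ for all $n\ge 8$, together with a padding argument for larger $n$). None of these constructions is routine enough to be waved through as ``a finite verification,'' and your convex-position bound $\min e_0(K_n)\le n$ is far from the truth for every $n\ge 6$.

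On the lower-bound side the sketch contains statements that cannot all be right. You propose as candidate structural lemmas that the crossing-free subgraph $H$ is connected and spanning and that every vertex meets at least two crossing-free edges; but the theorem itself asserts $\min e_0(K_n)=0$ for $n\ge 8$, so $H$ can be empty and any such lemma must fail in general (and whatever small-$n$ version survives needs its own proof). Likewise, the ``conflict'' you describe --- two independent crossed edges each being the only possible crossing partner of the other --- is not forbidden by the fact that a good drawing of $K_4$ has at most one crossing: two independent edges crossing each other exactly once is precisely the allowed configuration, so this obstruction, as stated, does not exist, and the mechanism by which sparsity of $H$ is supposed to be excluded is never actually identified. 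What remains is an appeal to ``exhaustive search over the finitely many combinatorially distinct good drawings of $K_n$,'' which concedes that the lower bounds for $n\le 7$ are not derived here. In short, the proposal correctly identifies the shape of the problem (explicit extremal drawings plus case analysis over good drawings), but it neither supplies the drawings nor a sound combinatorial obstruction, so it does not establish any entry of Table~\ref{tbl:min(e_0)values}.
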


\setlength{\tabcolsep}{20pt}
\renewcommand{\arraystretch}{1.5}
\begin{table}[H]
\begin{center}
\begin{tabular}{|c|c|c|c|c|c|c|c|}\hline
n & $2$ & $3$ & $4$ & $5$ & $6$ & $7$ & $\mathbb{Z}_{\ge 8}$ \\ \hline
$\min e_0(K_n)$ & 1 & 3 & 4 & 4 & 3 & 2 & 0\\ \hline
\end{tabular}
\end{center}
\caption{Least possible number of edges with $0$ crossings in drawings of $K_n$.}
\label{tbl:min(e_0)values}
\end{table}

Moreover, they showed that as $\mathcal D$ ranges over all drawings of $K_n$,  $e_0(\mathcal{D})$ can attain every value between $\min e_0(K_n)$ and $\max e_0(K_n)$, except for the value $5$ when $n=4$ and the value $7$ when $n=5$.

In a separate paper, the value of $\overline{\min}\ e_0(K_n)$ was determined for $n \ge 3$.

\begin{theorem}[Harborth and Th{\"u}rmann, \cite{harborth1996numbers}]\label{Thurmann}
We have $\overline{\min}\ e_0(K_n) = 5$ for $n \ge 8$ and the values of $\min e_0(\overline{\mathcal{D}}_n)$ for $n \in [3, 7]$ are displayed in Table \ref{tbl:2nd}.
\end{theorem}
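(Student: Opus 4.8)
The plan is to handle $n \geq 8$ in two halves (the lower and the upper bound) and to dispose of $3 \leq n \leq 7$ by a direct finite analysis matching Table~\ref{tbl:2nd}. Throughout I use that a rectilinear drawing of $K_n$ is the same data as a set $P$ of $n$ points in general position with edges drawn as segments, and that the $h$ edges of the convex hull of $P$ are always uncrossed (no edge can cross a hull edge, since all of $P$ lies on one side of that edge's supporting line).

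For the lower bound $\overline{\min}\ e_0(K_n) \geq 5$ I would case on $h$, the number of vertices of the convex hull of $P$. If $h \geq 5$, the $h$ hull edges already suffice. If $h = 4$, with hull the convex quadrilateral $ABCD$, I must produce one further uncrossed edge; since $n \geq 5$ there is an interior point, and the idea is to pick a ``deepest'' interior point in a suitable sense---e.g.\ the interior point nearest the hull boundary, or a vertex of the convex hull of the interior points that is extreme towards some hull vertex---and show that it is joined by an uncrossed edge to an adjacent hull vertex, checking that every candidate crossing edge $rs$ leaves the two endpoints on the same side of the line $rs$. If $h = 3$, with hull triangle $ABC$, I need two extra uncrossed edges; the cleanest formulation is a lemma to the effect that every rectilinear drawing of $K_m$ with $m \geq 5$ and a triangular hull has at least two uncrossed edges with exactly one endpoint on the hull. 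I would prove this by a peeling argument: take the interior point $x$ closest to side $AB$ (so triangle $ABx$ is empty), choose carefully between $Ax$ and $Bx$ to get the first uncrossed edge, then repeat the argument inside an appropriate sub-triangle (or using side $BC$) to get a second one, verifying the two are distinct.

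For the upper bound $\overline{\min}\ e_0(K_n) \leq 5$ I would exhibit an explicit configuration with exactly five uncrossed edges. The natural template is a large triangle $ABC$ together with the remaining $n - 3$ points placed in a tiny, carefully positioned and oriented cluster near the centroid, engineered so that (i) among the edges internal to the cluster, all but two are crossed---using that interleaving chords of points on a small convex arc cross one another, and that the remaining ``spanning'' chords can be made to be crossed by edges running from a triangle vertex through the cluster; (ii) every edge from a triangle vertex to a cluster point is crossed; and (iii) the convex hull is still $ABC$. A perturbation/limiting argument then locks the count at $3 + 2 = 5$. (A convex pentagon with a suitable interior cluster, so that nothing beyond the five hull edges survives, is a plausible alternative template. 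As a further fallback, one can fix a configuration with exactly five uncrossed edges for the smallest relevant $n$---by hand or by computer---and then insert new vertices one at a time, each placed infinitesimally close to an already fully-crossed vertex $v$ and essentially along one of the heavily crossed edges at $v$, arguing this preserves the property of having exactly five uncrossed edges.)

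The main obstacle is the lower bound in the triangular-hull case: it is a statement about all point configurations, so one needs a bulletproof argument that two non-hull edges survive uncrossed, ruling out crossings by all $\binom{n-2}{2}$ other edges simultaneously---and pinning down exactly which interior point and which incident edge to use, so that the verification is uniform over all configurations, is where almost all of the work goes (the quadrilateral case serves as a warm-up). On the construction side, the delicate point is tuning the cluster: flattening it kills more internal edges but threatens to leave some triangle-to-cluster edge uncrossed, so the parameters must be chosen so that exactly two extra uncrossed edges remain---no more and no fewer.
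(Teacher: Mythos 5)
You should first be aware that the paper does not prove this statement: it is quoted as a known result of Harborth and Th\"urmann, so there is no internal proof to compare against, and your proposal is in effect an attempt to reconstruct their paper from scratch.

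As it stands, the reconstruction has a genuine gap exactly where the content of the theorem lies, namely the lower bound when the convex hull is small. Your key step for a triangular hull $ABC$ is to take the interior point $x$ closest to the side $AB$ (so that the triangle $ABx$ is empty) and to ``choose carefully between $Ax$ and $Bx$''; but emptiness of $ABx$ does not protect either candidate, since an edge emanating from $B$ (resp.\ $A$) may pass through the empty triangle and cross $\overline{Ax}$ (resp.\ $\overline{Bx}$). Concretely, take $A=(0,0)$, $B=(1,0)$, $C=(0.5,1)$, $x=(0.5,0.1)$, $r=(0.2,0.12)$, $r'=(0.8,0.12)$, together with further points near $C$ to reach $n\ge 8$. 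Here the hull is $ABC$, the point $x$ is the unique interior point closest to $AB$, and $ABx$ is empty, yet a direct computation (the configuration is symmetric under $u\mapsto 1-u$) shows that $\overline{Br}$ crosses $\overline{Ax}$ and $\overline{Ar'}$ crosses $\overline{Bx}$, so both of your candidate edges are crossed simultaneously; other uncrossed non-hull edges do exist in this example, but your selection rule does not find them. The same objection applies to the quadrilateral-hull case, where ``deepest interior point'' is not specified in any way that guarantees an uncrossed incident edge, and the peeling/repetition step needed to extract a \emph{second} uncrossed edge in the triangular case inherits the same problem. Since these two lemmas are the entire difficulty of the bound $\overline{\min}\ e_0(K_n)\ge 5$, the proposal does not yet prove it; a correct argument needs a different selection principle or a global counting argument. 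The upper-bound cluster construction and the finite analysis for $3\le n\le 7$ (note that $n=7$ requires establishing the larger value $6$ over all configurations of seven points, a nontrivial check in itself) are plausible in outline but are only asserted, not carried out.
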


\setlength{\tabcolsep}{20pt}
\renewcommand{\arraystretch}{1.5}
\begin{table}[H]
\begin{center}
\begin{tabular}{|c|c|c|c|c|c|c|c|}\hline
$n$ & $3$ & $4$ & $5$ & $6$ & $7$ \\ \hline
$\overline{\min}\ e_0(K_n)$ & $3$ & $4$ & $5$ & $5$ & $6$ \\ \hline
\end{tabular}
\end{center}
\caption{Least possible number of edges with $0$ crossings in rectilinear drawings of $K_n$ for $n \in [3, 7]$.}
\label{tbl:2nd}
\end{table}

\subsection{On \texorpdfstring{$k$}{TEXT}-crossing sums}\label{sec:crossing_sums}

Here, we discuss some previous work related to $S_k(K_n)$. 

In addition to characterizing the possible values that $e_0(\mathcal D)$ can take for drawings of the complete graph, Harborth and Mengersen were able to determine the value of $\max S_1(K_n)$ up to an additive constant.

\begin{theorem}[Harborth and Mengersen,~\cite{harborth1990edges}]\label{teo:S1} 
For $n \ge 8$, we have 
\begin{align}
2n + \left\lfloor \frac{n-1}{2} \right\rfloor - 2 \le \max S_1(K_n) \le 2n + \left\lfloor \frac{n-1}{2} \right\rfloor + 7, \nonumber
\end{align}
and the exact values of $\max S_1(K_n)$ for $n \in [9]$ are displayed in Table~\ref{tbl:max(S_1)values}.
\end{theorem}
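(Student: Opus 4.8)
The statement splits into a lower bound, which I would establish by an explicit construction, and an upper bound, which I would obtain by a planar, Euler‑type counting argument applied to the lightly crossed part of the drawing. Fix a drawing $\mathcal{D}$ of $K_n$, let $G_0\subseteq K_n$ be the subgraph of edges that are uncrossed in $\mathcal{D}$, and let $G_1$ be the subgraph of edges crossed exactly once, so $S_1(\mathcal{D})=|E(G_0)|+|E(G_1)|$. Two elementary observations drive everything. First, no edge of $\mathcal{D}$ can cross an edge of $G_0$, so every edge of $K_n$ either lies in $G_0$ or is drawn entirely inside the closure of a single face of the planar drawing of $G_0$, joining two vertices on that face's boundary; call such an edge a \emph{chord} of that face. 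Second, if $e\in E(G_1)$ is a chord of a face $F$ and its unique crossing is with a chord $f$ of $F$, then the endpoints of $e$ and $f$ must alternate around the boundary cycle of $F$ (two arcs in a disk with alternating endpoints cross an odd number of times), and since $e$ has no further crossing, its endpoints alternate with those of no other chord drawn in $F$. In other words, inside each face $F$ the edges of $G_1$ are exactly the degree‑one vertices of the \emph{alternation graph} of $F$ — the graph whose vertices are the chords drawn in $F$, two being adjacent when their endpoints alternate.

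\emph{Lower bound.} I would place $n$ points in convex position and let $G_0$ be the convex hull cycle together with a non‑crossing family of diagonals subdividing the polygon's interior into exactly $q:=\lfloor (n-2)/2\rfloor$ quadrilateral cells and at most one triangular cell; a short computation shows this uses $n-3-q$ diagonals and is always realizable, so $|E(G_0)|=2n-3-q$. In each quadrilateral cell draw its two diagonals so that they cross each other once and nothing else (the cell interior is otherwise empty); these $2q$ edges then belong to $G_1$. Finally draw every remaining edge of $K_n$ in the unbounded face, whose boundary contains all $n$ vertices, and do so generically so that each such edge acquires at least two crossings. This yields $S_1(\mathcal{D})=(2n-3-q)+2q=2n-3+\lfloor (n-2)/2\rfloor$, which already matches the claimed value $2n+\lfloor (n-1)/2\rfloor-2$ up to a constant; closing the last gap should only require keeping one extra uncrossed edge in $G_0$ and tracking a bounded number of additional singly‑crossed short chords among the edges placed in the unbounded face.

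\emph{Upper bound.} Index the faces of $G_0$ by $i$, with $t_i$ the number of vertices on the boundary of face $i$, so $\sum_i t_i=2|E(G_0)|$ and Euler's formula gives $\sum_i(t_i-3)=3n-6-|E(G_0)|$. The first observation makes every non‑edge of $G_0$ a chord of some face, which yields the \emph{covering inequality} $\sum_i\binom{t_i}{2}\ge\binom{n}{2}+|E(G_0)|$. The second observation gives $|E(G_1)|=\sum_i c_i$, where $c_i$ is the number of degree‑one vertices in the alternation graph of face $i$; directly, $c_i=0$ if $t_i=3$, $c_i\le 2$ if $t_i=4$, and $c_i=O(t_i)$ in general. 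Now combine these with Ringel's bound $|E(G_0)|\le 2n-2$ (which already forces $\sum_i(t_i-3)\ge n-4$): since $\sum_i t_i=O(n)$, convexity of $\binom{\cdot}{2}$ together with the covering inequality forces at least one face to have $\Omega(n)$ boundary vertices, and the key claim — the technical heart of the proof — is that any such large face is \emph{crossing‑saturated}, i.e., it must host $\Omega(n^2)$ chords, spread so that essentially every chord drawn in it is crossed at least twice, so that it contributes only $O(1)$ to $|E(G_1)|$. Charging the planarity ``budget'' $\sum_i(t_i-3)$ consumed by these saturated large faces against the $G_1$‑edges they fail to produce, and handling the remaining (small) faces via the crude bounds above, one arrives at $S_1(\mathcal{D})\le\tfrac{5}{2}n+O(1)$; a finite check of the few exceptional small‑face patterns pins the additive term down to $+7$ and yields the explicit small‑$n$ values.

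\emph{Main obstacle.} The difficulty is entirely in the upper bound, and specifically in the interplay between the \emph{global} covering constraint and the \emph{local}, per‑face structure of single crossings. Bounding $|E(G_1)|$ crudely (each face of size $t$ contributing $O(t)$) only gives $S_1=O(n)$ with the wrong constant, and Ringel's bound on $|E(G_0)|$ alone is equally far from tight; the point is that these two extremes are mutually exclusive, so one must quantify exactly how much planarity budget is unavoidably wasted by the large faces the covering inequality demands. Obtaining the leading term $\tfrac{5}{2}n$ this way looks robust, but extracting the precise additive constant — the reason the theorem states its bound only up to a bounded error and the reason the proof must descend to a case analysis — is the delicate step, mirrored on the construction side by the need to squeeze out the last few uncrossed and singly‑crossed edges to reach the exact value.
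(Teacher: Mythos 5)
First, a point of comparison: the paper does not prove this theorem at all --- it is imported directly from Harborth and Mengersen~\cite{harborth1990edges} --- so your proposal has to be judged as a self-contained proof of a nontrivial published result, and it falls short exactly where you yourself locate the heart of the matter. The lower-bound construction is fine in spirit but only reaches $2n-3+\lfloor (n-2)/2\rfloor$, one or two below the stated bound, and says nothing about the exact values in Table~\ref{tbl:max(S_1)values}; that is a bookkeeping deficit. The serious gap is the upper bound. Your key claim --- that any face forced to be large by the covering inequality must host $\Omega(n^2)$ chords and is then ``crossing-saturated'', contributing only $O(1)$ to $|E(G_1)|$ --- is neither proved nor correct as stated. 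A large face need not host many chords (pairs of vertices on its boundary may be drawable in other faces they share), and largeness alone buys nothing: a face whose boundary cycle carries $t$ vertices can host the star of chords $v_1v_3, v_1v_4, \ldots, v_1v_{t-1}$ together with the single chord $v_2v_t$, and then $t-3$ of its chords are crossed exactly once, so a single large face can contribute $\Theta(t)$ to $|E(G_1)|$. What actually limits such configurations is the global requirement that the remaining $\binom{t}{2}-O(t)$ pairs on that boundary be drawn in other faces, which costs uncrossed edges elsewhere; quantifying that trade-off is precisely the content of the theorem, and your ``charging the planarity budget'' step asserts the conclusion rather than arguing it. Nothing in the outline actually produces the constant $\tfrac{5}{2}$, let alone the additive $+7$ or the small-$n$ table.

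Second, the face/alternation framework itself requires hypotheses you do not have. The statement concerns arbitrary topological drawings, where $e_0(\mathcal D)$ can even be $0$ for $n\ge 8$ (Table~\ref{tbl:min(e_0)values}), and in general $G_0$ need not be connected, spanning, or $2$-connected. Then faces of $G_0$ are not bounded by simple cycles, the identities $\sum_i t_i = 2|E(G_0)|$ and $\sum_i(t_i-3)=3n-6-|E(G_0)|$ fail, and --- more fundamentally --- two chords attached to different boundary components of a face (or lying in a face with no boundary cycle at all, e.g.\ when $G_0$ is empty) are not forced to cross by any alternation/parity argument, so the identification of the $E_1$-edges inside a face with degree-one vertices of an alternation graph breaks down exactly in the drawings with few uncrossed edges, which an upper-bound proof must also handle. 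In short, this is a reasonable plan of attack, but the decisive step is missing and the structural assumptions underlying the counting are unjustified for general drawings, so it does not constitute a proof.
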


\setlength{\tabcolsep}{20pt}
\renewcommand{\arraystretch}{1.5}

\begin{table}[H]
\begin{center}
\begin{tabular}{|c|c|c|c|c|}\hline
$n$ & $1, 2, \ldots, 6$ & $7$ & $8$ & $9$ \\ \hline
$\max S_1(K_n)$ & $\binom{n}{2}$ & $18$ & $20$ & $22$ \\ \hline
\end{tabular}
\end{center}
\caption{Largest possible number of edges with $1$ crossing in drawings of $K_n$ for $n \in [9]$.}
\label{tbl:max(S_1)values}
\end{table}
 
Recall that a graph is $k$-planar if and only if it admits a drawing where each edge is involved in no more than $k$ crossings. The following result, due to Pach and Tóth, provides an upper bound on the number of edges in a $k$-planar graph.

\begin{theorem}[Pach and Tóth,~\cite{pach1997graphs}]\label{teo:k-planar}
Let $G = (V, E)$ be an $n$-vertex $k$-planar graph. For any $k \ge 1$, we have that
\begin{align}
|E| \le n \sqrt{16.875k}. \nonumber
\end{align}
\end{theorem}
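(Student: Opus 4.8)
The plan is to control the \emph{total} number of crossings in a $k$-planar drawing and then play this off against a crossing-lemma lower bound. Fix a drawing $\mathcal{D}$ of $G$ in which every edge is involved in at most $k$ crossings. Since each crossing is a point shared by exactly two edges, double counting the incidences between edges and crossings on them shows that $\mathcal{D}$ has at most $\tfrac{k}{2}|E|$ crossings, hence $\Cr(G)\le \tfrac{k}{2}|E|$. On the other hand, any inequality of the form $\Cr(G)\ge c\,|E|^3/n^2$ valid once $|E|$ is large compared to $n$ forces $c\,|E|^3/n^2\le \tfrac{k}{2}|E|$, i.e. $|E|\le n\sqrt{k/(2c)}$. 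So the theorem reduces to producing a crossing-lemma constant $c$ with $1/(2c)\le 16.875$, together with a separate treatment of the range where the crossing lemma does not apply.

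With the crossing lemma exactly as quoted above ($c=1/64$, valid for $|E|\ge 4n$) this already gives $|E|\le n\sqrt{32k}$ when $|E|\ge 4n$; and when $|E|<4n$ we trivially have $|E|<4n\le n\sqrt{16.875k}$ for every $k\ge 1$. Thus a proof of the theorem with $16.875$ replaced by $32$ is immediate, and the remaining content is sharpening the crossing-lemma constant to (essentially) $1/33.75$, after which the same computation yields the stated bound; the small cases $k\in\{1,2,3\}$, for which the threshold guaranteeing the improved crossing lemma exceeds $n\sqrt{16.875k}$, must then be handled directly.

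To get the improved crossing lemma I would bootstrap from the few-crossings-per-edge regime. First, using Euler's formula on the planarization of a drawing (turning each crossing into a new degree-$4$ vertex) together with a careful local analysis around crossings and uncrossed edges, prove sharp linear edge bounds for drawings with at most $1$, $2$, $3$ crossings per edge, of the shape $|E|\le 4n$, $5n$, $5.5n$ respectively. From such a bound one extracts a linear inequality valid for \emph{every} graph: take an optimal drawing of $G$ and repeatedly delete an edge carrying the most crossings until no edge has more than $3$; each deletion kills at least $4$ distinct crossings, so the number $t$ of deleted edges satisfies $4t\le \Cr(G)$, while the surviving $3$-planar drawing has $|E|-t\le 5.5n$ edges, and combining these gives $\Cr(G)\ge 4|E|-22n$. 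Then apply the standard probabilistic sparsification: retain each vertex independently with probability $p$, apply the linear inequality to the random induced subgraph, pass to expectations (expected vertex, edge, and crossing counts scale as $p$, $p^2$, $p^4$), and optimize over $p$; this upgrades a bound $\Cr(G)\ge a|E|-bn$ to $\Cr(G)\ge \tfrac{4a^3}{27b^2}\cdot|E|^3/n^2$ for $|E|\ge \tfrac{3b}{2a}n$. The crude choice of constants above already gives a bound of the correct form, and squeezing out the exact constant $16.875$ requires feeding in the strongest available small-$k$ edge bounds.

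Accordingly, the main obstacle is the combinatorial input: establishing tight enough edge bounds for $1$-, $2$-, and $3$-planar drawings. That step involves case analysis of the local picture at a crossing, redrawing moves that do not increase the maximum number of crossings on any edge (so that one may assume the drawing is crossing-minimal within its class), and a discharging-style count over the faces of the planarization; everything downstream --- the double count, the greedy deletion, and the probabilistic optimization --- is routine bookkeeping.
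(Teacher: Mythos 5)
This statement is quoted from Pach and Tóth and is not proved in the paper, so there is no in-paper argument to compare against; what you have written is, in outline, exactly the route of the cited original: bound the total number of crossings by $\tfrac{k}{2}|E|$ via double counting, play this off against a crossing-lemma bound $\Cr(G)\ge c|E|^3/n^2$, and obtain $|E|\le n\sqrt{k/(2c)}$. That reduction is correct, and your fallback computation genuinely proves the weaker bound $|E|\le n\sqrt{32k}$ (with the trivial case $|E|<4n$ handled as you say). The issue is that the theorem as stated, with the constant $16.875=\tfrac{135}{8}$, is not established by your argument: everything hinges on the improved crossing-lemma constant $c\ge \tfrac{4}{135}$, which in turn rests on the sharp linear edge bounds for drawings with few crossings per edge, and these you only describe (``careful local analysis,'' ``discharging-style count'') rather than prove. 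Those bounds are precisely the technical heart of the Pach--Tóth paper, so deferring them means the claimed constant is assumed, not derived.

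Two further points of bookkeeping. First, the specific numbers you float do not reach the stated constant even if granted: from $|E|\le 5.5n$ for $3$-planar drawings and the greedy deletion you get $\Cr(G)\ge 4|E|-22n$, and the sparsification constant $\tfrac{4a^3}{27b^2}$ with $a=4$, $b=22$ gives roughly $c\approx \tfrac{1}{51}$, i.e.\ only about $|E|\le n\sqrt{25.5k}$ after re-optimizing; to land exactly on $\sqrt{16.875k}$ one needs the bounds $|E|\le (k+3)(n-2)$ for all $k\le 4$, which yield $\Cr(G)\ge 5|E|-25(n-2)$, hence $c=\tfrac{4\cdot 5^3}{27\cdot 25^2}=\tfrac{4}{135}$ with validity threshold $|E|\ge 7.5n$. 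Second, your remark that $k\in\{1,2,3\}$ ``must be handled directly'' is resolved by those same linear bounds, since $(k+3)(n-2)\le n\sqrt{16.875k}$ for $k=1,2,3$; this should be said explicitly rather than left as a separate open case. So: right strategy (indeed the original one), complete for constant $32$, but with the decisive combinatorial lemmas missing for the constant actually claimed.
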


This result immediately implies an upper bound for $S_k(\mathcal D)$.

\begin{proposition}\label{teo:nsqrtk}
Let $G$ be a simple graph on $n$ vertices. For any drawing $\mathcal D$ of $G$ and $k \ge 1$, we have 
\begin{align}
S_k(\mathcal D) = O(n \sqrt{k}). \nonumber
\end{align}
\end{proposition}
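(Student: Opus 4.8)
The plan is to reduce the statement directly to Theorem~\ref{teo:k-planar} by passing to the subgraph of ``lightly crossed'' edges. Given a drawing $\mathcal D$ of $G$, let $H$ be the spanning subgraph of $G$ whose edge set consists precisely of those edges that are involved in at most $k$ crossings in $\mathcal D$; by definition, $|E(H)| = S_k(\mathcal D)$. Restricting $\mathcal D$ to the vertices of $G$ and the edges of $H$ yields a drawing $\mathcal D'$ of $H$ on $n$ vertices.

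The key observation is that $\mathcal D'$ witnesses that $H$ is $k$-planar: deleting edges from a drawing cannot create new crossings, so every edge of $H$ participates in at most as many crossings in $\mathcal D'$ as it did in $\mathcal D$, namely at most $k$. Hence $H$ is an $n$-vertex $k$-planar graph, and since $k \ge 1$ we may apply Theorem~\ref{teo:k-planar} to conclude $|E(H)| \le n\sqrt{16.875k}$.

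Putting the two steps together gives $S_k(\mathcal D) = |E(H)| \le n\sqrt{16.875k} = O(n\sqrt{k})$, as desired. I do not anticipate any real obstacle here; the only point that warrants a sentence of care is the monotonicity remark that removing edges from a drawing does not increase the number of crossings on any surviving edge, which is what legitimizes calling $H$ a $k$-planar graph. (One could alternatively phrase the whole argument as: $S_k(\mathcal D)$ never exceeds $\overline{\max}$ or $\max$ of the edge count of an $n$-vertex $k$-planar graph, but the subgraph formulation above is the cleanest.)
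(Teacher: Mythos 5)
Your argument is correct and is essentially identical to the paper's own proof: both pass to the subgraph $H$ of edges with at most $k$ crossings, observe that the induced drawing certifies $H$ is $k$-planar (since deleting edges cannot increase crossings on surviving edges), and then apply Theorem~\ref{teo:k-planar} to bound $|E(H)| = S_k(\mathcal D) \le n\sqrt{16.875k}$. No gaps to report.
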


\begin{proof}
Consider the subgraph $H$ of $G$ that consists of all the edges involved in at most $k$ crossings of $\mathcal D$. We look at the drawing $\mathcal{D}(H)$ of $H$ induced by $\mathcal D$. Each edge in $\mathcal D(H)$ is crossed no more than $k$ times, so it follows that $H$ is a $k$-planar graph. Now Theorem~\ref{teo:k-planar} directly implies that $\mathcal{D}(H)$ has at most $n \sqrt{16.875k}$ edges or, equivalently, that $S_k(\mathcal D) \le n \sqrt{16.875k}$.
\end{proof}

As we shall see in Section~\ref{sec:maxS}, this result is tight up to a constant factor. We remark that the same sort of bound, but with a worse constant, can be obtained directly from the crossing lemma (without having to resort to the results in~\cite{pach1997graphs}).

\subsection{The cutting lemma}

The following result from computational and discrete geometry, which is commonly known as the cutting lemma, will be the main ingredient in our proof of the lower bound for $\overline{\min}\ S_k(K_n)$ that is to be presented in Section~\ref{sec:minS}.

\begin{theorem}[Cutting lemma, Matou{\v{s}}ek,~\cite{matouvsek1990construction,matousekcuttings1,matousekcuttings2}]\label{teo:cutting}
Let $S$ be a set of $n$ lines in $\mathbb{R}^2$ and $t \in (1, n)$ be a parameter. Then, $\mathbb{R}^2$ can be subdivided\footnote[1]{The regions are pairwise interior disjoint and cover all of $\mathbb{R}^2$.} into $r \le Ct^2$ generalized triangles (regions that are the intersection of three half-planes), where $C$ is an absolute constant, such that the interior of each generalized triangle is intersected by at most $\frac{n}{t}$ lines of $S$.
\end{theorem}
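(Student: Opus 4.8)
The plan is to establish the cutting lemma probabilistically, first obtaining a weaker ``one-shot'' version with an extra polylogarithmic factor in the number of regions, and then bootstrapping it into the optimal bound $r = O(t^2)$ via a hierarchical (iterated) construction. Throughout, call a region \emph{light} if it is crossed by at most $n/t$ lines of $S$; the guiding principle is the Clarkson--Shor philosophy that a random subsample of the lines behaves, with respect to the quantities we care about, almost as well as the full set.

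For the one-shot version, draw a uniformly random subset $R \subseteq S$ with $|R| = m$, where $m = \Theta(t \log t)$. Form the arrangement $\mathcal{A}(R)$ and triangulate each of its faces via a vertical decomposition, splitting bounded faces into ordinary triangles and unbounded faces into generalized triangles (intersections of three half-planes); this yields $O(m^2)$ regions, each ``defined'' by at most four lines of $R$. I claim that with positive probability every such region is light. Indeed, if some region were crossed by more than $n/t$ lines of $S$, then $R$ would fail to be a $\tfrac1t$-net for the range space whose ranges are sets of lines crossing a fixed generalized triangle spanned by $S$; since there are only $O(n^4)$ such candidate triangles, a union bound together with a Chernoff estimate (equivalently, the $\varepsilon$-net theorem) shows this failure has probability less than $1$ once $m = \Theta(t\log t)$. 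Hence a good sample exists, producing a $\tfrac1t$-cutting of size $O(t^2 \log^2 t)$. For the application in Section~\ref{sec:minS} this weaker bound would in fact already suffice, changing only constants.

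To remove the logarithmic factors, build the cutting in $\ell = O(\log t)$ stages. Start with $\mathbb{R}^2$ as a single region, and at each stage refine every current region $\Delta$ independently: letting $S_\Delta$ be the lines of $S$ meeting the interior of $\Delta$, sample a constant number $\rho$ of the lines of $S_\Delta$, intersect $\Delta$ with the resulting sub-arrangement, and triangulate the pieces. After $\ell$ stages the effective sampling rate has been amplified to $\rho^{\ell} \approx t$, so every final region is light with positive probability. The delicate point is bounding the total number of regions, and this is where the \emph{exponential decay lemma} of Chazelle--Friedman is used: in a random sample of size $s$, the expected number of cells of the triangulated arrangement crossed by more than $j\cdot(n/s)$ lines of $S$ is $2^{-\Omega(j)} s^2$. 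Feeding this estimate into the refinement tree, the per-stage contributions telescope and the grand total of regions is $O(t^2)$; the same computation bounds the expected number of light regions by $O(t^2)$, so some outcome realizes it deterministically.

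The main obstacle is precisely this last step: proving the exponential decay estimate and carrying out the bookkeeping that converts the per-stage bounds into a single $O(t^2)$ bound without accumulating $\log$ factors. The crux is that a region which is badly overcrowded after stage $i$ is unlikely to have survived the earlier samplings in the shape it did, so overcrowded regions are rare enough that refining them further --- even wastefully --- costs only $O(t^2)$ overall. A secondary, more routine nuisance is the careful handling of unbounded faces and of the vertical decomposition, so that every region is genuinely an intersection of three half-planes and the bounds $O(m^2)$ on arrangement complexity and on triangulation size are valid as stated.
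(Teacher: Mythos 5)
The paper does not prove this statement at all: the cutting lemma is imported as a black box from Matou\v{s}ek's papers, so there is no internal argument to compare yours against. Your outline does follow the route taken in the cited literature --- a one-shot random sample of size $\Theta(t\log t)$, justified by an $\varepsilon$-net/union-bound argument over candidate triangles, gives a $\tfrac1t$-cutting of size $O(t^2\log^2 t)$, and the optimal $O(t^2)$ is then obtained by constant-rate hierarchical refinement controlled by the Chazelle--Friedman exponential decay lemma. As a roadmap this is accurate, but as a proof it has a genuine gap that you yourself flag: the exponential decay lemma and the refinement-tree bookkeeping that keeps logarithmic losses from accumulating are precisely the mathematical core of the theorem, and you assert them rather than establish them. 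The same goes, more mildly, for converting the trapezoids and unbounded cells of the vertical decomposition into genuine intersections of three half-planes while preserving the $O(m^2)$ complexity bound. Until those steps are carried out, what you have is a correct summary of the known proof, not a proof.

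One concrete side-claim is also wrong: the weaker $O(t^2\log^2 t)$ cutting would not suffice for the application ``changing only constants.'' In Lemma~\ref{teo:block} the number of regions $r$ enters both the requirement $r\le \frac{m}{2}$ and the final count $\frac{m^2}{4r}$; with an extra $\log^2 t$ factor the admissible block size $m$ and the per-block edge count each degrade by polylogarithmic factors, so the lower bound of Theorem~\ref{thm:lower S_k} would come out polylogarithmically below $\Omega\left(\left(\frac{k}{n}\right)^2\log\left(\frac{k}{n}\right)\right)$ and would no longer match the construction of Theorem~\ref{nestedtriangles}. The full strength $r=O(t^2)$ is genuinely needed for the tight characterization in Theorem~\ref{teo:minS}.
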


Higher dimensional analogs and various applications of this result can be found in~\cite{berg1995cuttings, chazelle2018cuttings}.

\subsection{Some additional notation and assumptions}

Throughout this paper, $n$ and $k$ denote non-negative integers. As is customary, we write $[n] = \{1, 2, \ldots, n\}$ for all $n \in \mathbb{Z}^+$. We always assume that the vertices of a drawing $\mathcal D$ are in general position, as sufficiently small perturbations can always be made to ensure this condition. Given two distinct points $A,B$ on the plane, $\overleftrightarrow{AB}$ will denote the line that goes through $A$ and $B$, while $\overline{AB}$ will denote the segment with endpoints $A$ and $B$. Following standard notation, if $f$ and $g$ are two functions of $t$ variables, we write $f(x_1,\ldots,x_t)\leq O(g(x_1,\ldots,x_t))$ (or, interchangeably, $f(x_1,\ldots,x_t)= O(g(x_1,\ldots,x_t))$) if there exists a constant $C>0$ such that $f(x_1,\ldots,x_t)\leq Cg(x_1,\ldots,x_t)$, and write $g(x_1,\ldots,x_t) \geq\Omega(f(x_1,\ldots,x_t))$ to indicate the same thing. We write $f(x_1,\ldots,x_t)=\Theta(g(x_1,\ldots,x_t))$ if $f(x_1,\ldots,x_t)=O(g(x_1,\ldots,x_t))$ and $f(x_1,\ldots,x_t)=\Omega(g(x_1,\ldots,x_t))$.

\subsection{Results and outline of the paper}
The goal of this paper is to study the behavior of $\overline\max\ e_k(K_n),\overline\min\ e_k(K_n),\overline\max\ S_k(K_n)$ and $\overline\min\ S_k(K_n)$. In Section~\ref{sec:maxe}, we showcase an intricate construction which implies that, for $0\leq k\leq \left\lfloor\frac{(n-2)^2}{4}\right\rfloor$, $\overline\max\ e_k(K_n)$ grows at least linearly in $n$. For $k=1$, we obtain a more precise bound of $\overline\max\ e_1(K_n)\geq \frac{3n}{2}-O(1).$ The purpose of Section~\ref{sec:mine} is to show that for all sufficiently large $n$, $\overline\min\ e_k(K_n)=0$ for all $k\geq1$. In Section~\ref{sec:maxS} we provide a construction which matches the $O(n\sqrt{k})$ upper bound for $\overline\max\ S_k(K_n)$ given by Proposition~\ref{teo:nsqrtk}, thus arriving at $\overline\max\ S_k(K_n) = \Theta(n \sqrt{k})$. In Section~\ref{sec:minS}, we prove what we consider to be our most interesting result. Namely, we determine the asymptotic behavior of $\overline\min\ S_k(K_n)$ up to (absolute) multiplicative constants for any $n$ and any $k$ between $1$ and $\left\lfloor \left( \frac{n-2}{2} \right)^2 \right\rfloor$, inclusive. Finally, we discuss some future research avenues and a related problem in Section~\ref{sec:final}.

\section{Regarding \texorpdfstring{$\overline{\max}\ e_k(K_n)$}{TEXT}} \label{sec:maxe}

In this section, we derive a lower bound for $\overline\max\ {e_k(K_n)}$ via a delicate construction and observe how an upper bound of $O(n\sqrt{k})$ for this same quantity follows directly from Proposition~\ref{teo:nsqrtk}. Then, in an attempt to obtain an analogue of Theorem~\ref{Ringel} for $k=1$ instead of $k=0$, we provide a better construction in this particular case. 

\subsection{A construction attaining \texorpdfstring{$\overline\max\ {e_k(K_n)} = \Omega(n)$}{TEXT}}

\begin{theorem}\label{teo:lin e_k}
For any $n$ and any $k$ between $0$ and $\left\lfloor \frac{(n-2)^2}{4} \right\rfloor$, inclusive, $\overline\max\ {e_k(K_n)} \geq \Omega(n)$.
\end{theorem}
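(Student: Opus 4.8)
It suffices to produce, for each fixed $k$ in the stated range, a rectilinear drawing of $K_n$ in which at least $\Omega(n)$ edges are crossed exactly $k$ times. The starting point is the elementary fact behind the footnote: if $e=\overline{uv}$ is an edge and the line $\overleftrightarrow{uv}$ leaves $a$ of the remaining $n-2$ vertices strictly on one side and $b=n-2-a$ strictly on the other, then $e$ can only be crossed by the $ab$ segments joining a vertex of the first group to a vertex of the second, and it is crossed by exactly those among them whose intersection with $\overleftrightarrow{uv}$ lies inside the segment $\overline{uv}$ rather than beyond $u$ or beyond $v$. Thus the crossing number of $e$ equals $ab$ minus the number of ``overshooting'' transversal segments. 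Points in convex position realise the extreme case where nothing overshoots: for each $a$ they give $n$ rotationally equivalent edges crossed exactly $a(n-2-a)$ times. Since consecutive values $a(n-2-a)$ differ by less than $n$, every admissible $k$ has the form $a(n-2-a)-r$ with $0\le a\le \tfrac{n-2}{2}$ and $0\le r<n$; the construction must therefore arrange, simultaneously for a linear family of edges, a split of size roughly $(a,\,n-2-a)$ together with exactly $r$ overshooting transversals per edge.

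The configuration I would build consists of a bounded number of tiny clusters of points, each placed in convex position, positioned so that the segments running between two clusters form a nearly parallel ``bundle'' whose combinatorial crossing pattern is transparent. The target family of $\Omega(n)$ edges will be a suitable ``antidiagonal'' sub-bundle between two principal clusters $U$ and $V$ of near-linear size placed far apart, chosen so that in the base configuration (just $U$ and $V$) all of these $\Theta(n)$ edges have the same crossing number, a value of the form $a(n-2-a)$. One then inserts one or two auxiliary clusters: a cluster placed \emph{beyond} the endpoint $u$ or $v$ of the bundle contributes transversal segments that overshoot every bundle edge, lowering each crossing count by a prescribed amount, while a cluster placed off to one side in a generic direction contributes the \emph{same} number of genuine crossings to every bundle edge (the index-dependence cancels between the two principal clusters once the auxiliary cluster is split symmetrically above and below the bundle). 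Tuning the sizes and positions of these auxiliary clusters as functions of $k$ and of the residue $r=a(n-2-a)-k$ then pins the crossing number of every one of the $\Theta(n)$ bundle edges at exactly $k$.

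Concretely the steps are: (1) set up coordinates for a bundle and verify, by a short case analysis over the five kinds of edge pair (within one cluster, within one bundle, or between two distinct bundles), the exact formula for how often a given bundle edge is crossed, in terms of cluster sizes, antidiagonal indices, and which clusters lie to the left of $u$, to the right of $v$, above, or below the bundle; (2) analyse the two-cluster base configuration and isolate the $\Theta(n)$-size family of edges sharing the common value $a(n-2-a)$; (3) prove an ``overshoot lemma'': a cluster of suitable size placed just past an endpoint of the bundle makes a controlled, index-independent number of transversals miss every bundle edge, so its net effect is to subtract a prescribed integer; (4) given $k$, choose $a$, $r$, and the auxiliary-cluster parameters so that the bundle edges all end up with exactly $k$ crossings, checking feasibility within the budget of $n$ points for every $k\le\lfloor(n-2)^2/4\rfloor$; (5) dispose of the residual cases (very small $n$, or $k$ within a constant of $0$) by hand. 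The sharper bound $\overline{\max}\,e_1(K_n)\ge \tfrac{3n}{2}-O(1)$ at $k=1$ would come from a separate, more economical configuration tailored to that value.

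The step I expect to be the real obstacle is (3)--(4): one must ensure that the correction produced by the auxiliary clusters is \emph{exactly} the same integer for \emph{each} of the $\Theta(n)$ bundle edges (not merely up to $O(1)$, and not merely for a sub-family), that the several gadgets do not corrupt one another's bookkeeping, and that as $k$ sweeps the whole interval $[0,\lfloor(n-2)^2/4\rfloor]$ the required split and the required number of overshoots are always jointly attainable. Making all of this hold at once is precisely what makes the construction delicate.
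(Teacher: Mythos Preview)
Your starting point is sound: in convex position the $n$ edges cutting off exactly $a$ points share the value $a(n-2-a)$, and writing $k=a(n-2-a)-r$ with $0\le r<n$ identifies the deficit to engineer. The gap is exactly where you flag it, at steps (3)--(4), and I do not think it can be closed along the lines you sketch. Your target family consists of $\Theta(n)$ edges that are combinatorially equivalent but geometrically distinct: they point in $\Theta(n)$ different directions (in the convex picture) or, in the two-tiny-cluster picture, they have $\Theta(n)$ different index pairs $(i,j)$ along the antidiagonal, and in that picture the base crossing count $(i-1)(q-j)+(p-i)(j-1)$ is already non-constant along the antidiagonal. Either way, a single auxiliary cluster placed ``beyond $u$'' lies on different sides of the various lines $\overleftrightarrow{u_iv_j}$, or at different combinatorial depths relative to them, so the number of overshooting transversals it produces varies with the bundle edge. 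Getting an \emph{identical} integer correction on $\Theta(n)$ non-parallel edges from a bounded number of clusters is not a bookkeeping nuisance; it is the whole difficulty, and nothing in the plan explains how to achieve it.

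The paper bypasses this entirely by decoupling the two scales. It first builds a \emph{small} gadget on $2m$ (or $2m-1$) points with $m=\lceil\sqrt{k}\rceil+1$: place $P_1,\dots,P_m$ on a short arc and a second copy $Q_1,\dots,Q_m$ with each $Q_i$ an infinitesimal perturbation of $P_i$. The $j=\lfloor m/2\rfloor$ ``interior diameters'' $\overline{Q_iQ_{i+j}}$ automatically pick up the bulk term $(m-2)^2$ from edges between the two sides, and the remainder $r\in[1,2m-3]$ is dialed in one edge at a time by choosing, for each $i$ separately, between which two of the rays $\overrightarrow{P_iP_{i+s}},\overrightarrow{P_iQ_{i+s}}$ the perturbed point $Q_i$ sits. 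Because each $Q_i$'s placement only affects its own diameter, there is no uniform-correction problem at all. This yields $\Omega(m)=\Omega(\sqrt{k})$ edges with exactly $k$ crossings inside a block of size $O(\sqrt{k})$. The $\Omega(n)$ then comes from \emph{replication}: put $t=\lfloor n/2m\rfloor$ copies of the gadget on very flat arcs facing the center of a large circle, so that no inter-copy edge can reach any interior diameter, and drop the leftover points near the center. The missing idea in your plan is precisely this replication step; once you have it, you only need a gadget of size $\Theta(\sqrt{k})$ with $\Theta(\sqrt{k})$ good edges, and the fine-tuning inside a single gadget becomes a finite perturbation argument rather than a uniform correction across $\Theta(n)$ edges.
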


\begin{remark}
If $P_1$ and $P_2$ are two vertices of a rectilinear drawing $\mathcal D$ of $K_n$ such that there are $m$ vertices on one side of the line $\overleftrightarrow{P_1P_2}$, then the number of edges crossing $\overline{P_1P_2}$ is at most $m((n-2) - m) \le \left( \frac{n-2}{2} \right)^2$. Thus, this theorem applies to all $k$ for which $e_k(\mathcal D) > 0$ is attainable.
\end{remark}

\begin{proof}
For $k=0$, the result follows from Theorem~\ref{Ringel}. Assume that $k>0$ and write $k = (m-2)^2 + r$ with $r \in [1, 2m-3]$, so that $m = \left\lceil \sqrt{k} \right\rceil + 1$. At a high level, our strategy consists of producing a configuration of either $2m-1$ or $2m$ points (similar to the one depicted in Figure~\ref{fig2}) which induces $\Omega(m)$ edges with $k$ crossings, and then arranging together many copies of it so that they do not interfere with each other to obtain a rectilinear drawing $\mathcal D$ of $K_n$ with $e_k(\mathcal D)=\Omega(n)$. The construction of these building blocks of size either $2m-1$ or $2m$ requires a significant amount of casework, most of which is deferred to the appendix. Here, we only include the proof for the case where both $n$ and $m$ are even, as it illustrates the important ideas while not being too long.

\begin{figure}[H]
\begin{center}
\includegraphics[scale = 0.7]{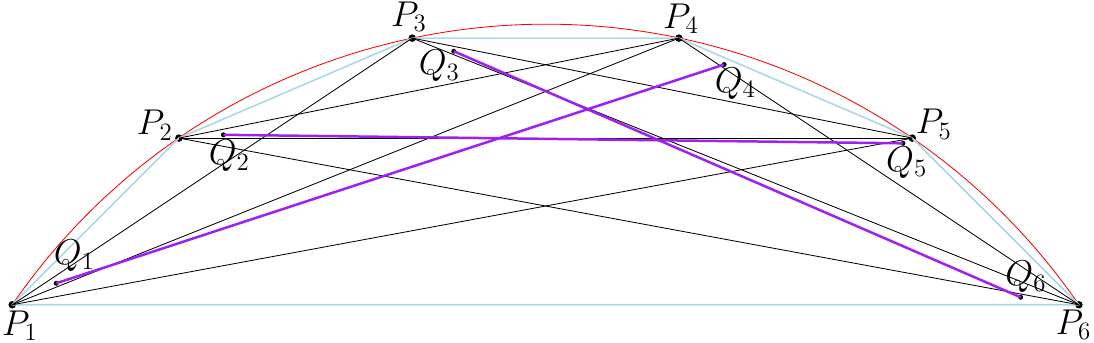}
  \caption{A configuration of $12$ points for $m=6$ and $k = 19$.}
  \label{fig2}
\end{center}
\end{figure}

The fact that $k\leq \left( \frac{n-2}{2} \right)^2$ implies $2m \le n$. Write $m = 2j$ and place $m$ equally spaced vertices $P_1, P_2, \ldots, P_m$ on a small circular arc in clockwise order. We also add $m$ other vertices $Q_1, Q_2, \ldots, Q_m$ such that $Q_i$ is very close to $P_i$ for $i \in [m]$. The precise positions of the $Q_i$'s will be specified below. Whenever we talk about $P_i$'s and $Q_i$'s, all indices are to be interpreted modulo $m$. Our goal is to place $Q_1, Q_2, \ldots, Q_m$ so that the $j$ \textit{interior diameters}
\begin{align}
\overline{Q_1Q_{j+1}}, \overline{Q_2Q_{j+2}}, \ldots, \overline{Q_jQ_{2j}} \nonumber
\end{align}
are each crossed exactly $k$ times. Notice that $\overline{Q_iQ_{i+j}}$ is crossed by all $(m-2)^2$ edges of the form $\overline{P_aP_b}$, $\overline{Q_aQ_b}$, $\overline{P_aQ_b}$, or $\overline{Q_aP_b}$ for $a \in [i+1, i + j - 1]$ and $b \in [i + j + 1, i + 2j-1]$. Now, we address the edges that are incident to at least one of $P_{i}$ and $P_{i+j}$.

We begin by perturbing $Q_1, Q_2, 
\ldots, Q_m$ so that $Q_i$ lies inside the angle $\angle P_{i+j}P_{i}P_{i+1}$ for all $i\in [m]$. This ensures that the $m-1$ rays
\begin{align}\label{equ:rays1}
\overrightarrow{P_iQ_{i+j-1}}, \overrightarrow{P_iP_{i+j-1}}, \overrightarrow{P_iQ_{i+j-2}}, \overrightarrow{P_iP_{i+j-2}}, \ldots, \overrightarrow{P_iQ_{i+1}}, \overrightarrow{P_iP_{i+1}}, \overrightarrow{P_{i-1}P_i}
\end{align}
appear in counterclockwise order with respect to $P_i$ and that the $m-1$ rays
\begin{align}\label{equ:rays2}
\overrightarrow{P_{i+j}Q_{i-1}}, \overrightarrow{P_{i+j}P_{i-1}}, \overrightarrow{P_{i+j}Q_{i-2}}, \overrightarrow{P_{i+j}P_{i-2}}, \ldots, \overrightarrow{P_{i+j}Q_{i+j+1}}, \overrightarrow{P_{i+j}P_{i+j+1}}, \overrightarrow{P_{i+j-1}P_{i+j}}
\end{align}
appear in counterclockwise order with respect to $P_{i+j}$. Moreover, because $Q_i$ and $Q_{i+j}$ lie on different sides of $\overline{P_iP_{i+j}}$, it follows that $\overline{P_iP_{i+j}}$ also crosses $\overline{Q_iQ_{i+j}}$. Now, we proceed with a more refined description of the positions of the $Q_i$'s.

If $r \in [1, m-1]$, then for every $i\in [j]$ we place  $Q_i$ so that it lies in between $\overrightarrow{P_iP_{i+j}}$ and $\overrightarrow{P_iQ_{i+j-1}}$, and we place $Q_{i+j}$ so that it lies in between the $(r-1)^{\text{th}}$ and $r^{\text{th}}$ rays listed in (\ref{equ:rays2}), where the $0^{\text{th}}$ ray is defined as $\overrightarrow{P_{i+j}P_i}$. This ensures that the edges incident to exactly one of $P_i$ and $P_{i+j}$ that cross $\overline{Q_iQ_{i+j}}$ are precisely those that correspond to the first $r-1$ rays listed in (\ref{equ:rays2}). Thus, the total number of edges crossing $\overline{Q_iQ_{i+j}}$ is indeed
\begin{align}
(m-2)^2 + 1 + (r-1) =  k. \nonumber
\end{align}
This way, each of the $j$ interior diameters is involved in precisely $k$ crossings. See Figure~\ref{fig2}.

If $r \in [m, 2m-3]$, then for every $i\in[j]$ we place $Q_i$ so that it lies in between the $(r-m+1)^{\text{th}}$ and $(r-m+2)^{\text{th}}$ rays of (\ref{equ:rays1}), and we place $Q_{i+j}$ so that it lies in between $\overrightarrow{P_{i+j}P_{i+j+1}}$ and $\overrightarrow{P_{i+j-1}P_{i+j}}$. This ensures that the edges incident to exactly one of $P_i$ and $P_{i+j}$ that cross $\overline{Q_iQ_{i+j}}$ are precisely those that correspond to the first $r-m+1$ rays of (\ref{equ:rays1}) and the first $m-2$ rays of (\ref{equ:rays2}). Thus, the total number of edges crossing $\overline{Q_iQ_{i+j}}$ is 
\begin{align}
(m-2)^2 + 1 + (r-m+1) + (m-2) = (m-2)^2 + r = k, \nonumber
\end{align}
so each of the interior diameters is crossed exactly $k$ times. See Figure~\ref{fig3}.

\begin{figure}[H]
\begin{center}
\includegraphics[scale = 0.7]{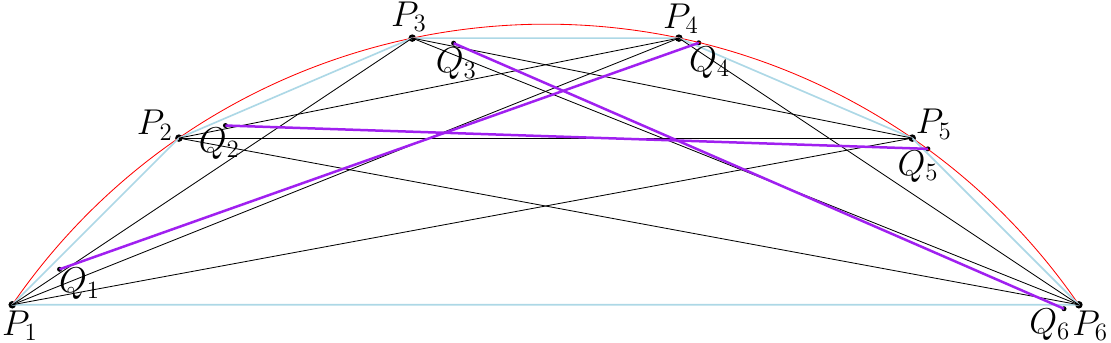}
  \caption{A configuration of $12$ points for $m=6$ and $k = 23$.}
  \label{fig3}
\end{center}
\end{figure}

We now describe how $t = \left\lfloor \frac{n}{2m} \right\rfloor$ copies of this configuration of $2m$ points can be arranged together so that no additional edges in the drawing interfere with the interior diameters of the individual configurations. Place points $A_1, B_1, A_2, B_2, \ldots, A_t, B_t$ on a circle $C$ in counterclockwise order. For each $\overline{A_iB_i}$, we construct a small circular arc $c_i$ containing $A_i$ and $B_i$ that is fully contained inside $C$, as seen in Figure~\ref{fig4}, and place the configuration of $2m$ points along this arc. Finally, place any of the at most $2m$ leftover vertices in a cluster near the center of $C$ to get a drawing $\mathcal D$ of $K_n$.

\begin{figure}[H]
\begin{center}
\includegraphics[scale = 0.4]{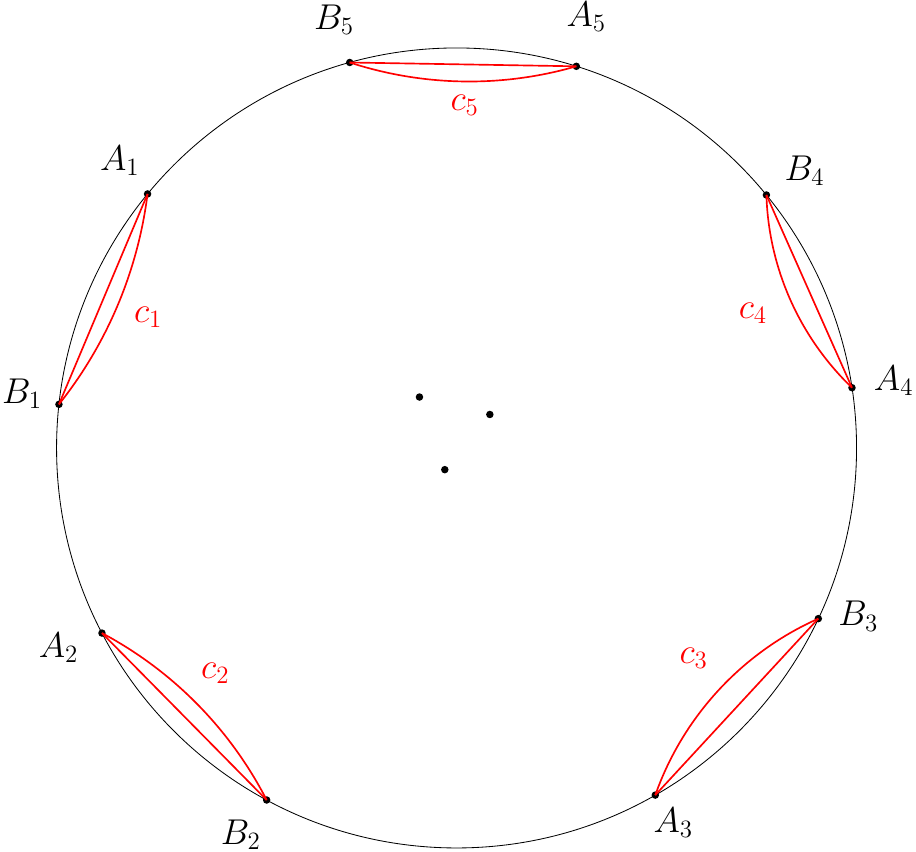}
  \caption{Grouping together $5$ copies of the $2m$-point configuration described above. Some leftover nodes have been placed near the center of the circle.}
  \label{fig4}
\end{center}
\end{figure}

We can make each $c_i$ sufficiently flat so as to ensure that no interior diameter of the configuration based at $c_i$ is intersected by additional edges of the drawing. Thus, each of these interior diameters still has $k$ crossings, and we have that
\begin{align}
e_k(\mathcal D) \ge t \cdot \Omega(m) = \left\lfloor \frac{n}{2m} \right\rfloor \cdot \Omega(m) = \Omega(n) \nonumber.
 \end{align}

\end{proof}

\subsection{An upper bound on \texorpdfstring{$e_k(\mathcal{D})$}{TEXT}}

The proposition below follows immediately from the fact that $e_k(\mathcal{D}) \le S_k(\mathcal{D}) \le n \sqrt{16.875k}$ for any drawing of an $n$-vertex graph, where the last inequality is given by Proposition~\ref{teo:nsqrtk}.

\begin{proposition}
Let $G$ be an $n$-vertex graph. For any drawing $\mathcal D$  of $G$ and any $k \ge 1$, it holds that $e_k(\mathcal{D}) = O(n \sqrt{k})$.
\end{proposition}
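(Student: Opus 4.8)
The plan is to obtain this bound as an immediate consequence of Proposition~\ref{teo:nsqrtk}, which already controls the $k$-crossing sum $S_k(\mathcal D)$. The only point to make is a trivial containment: an edge that participates in exactly $k$ crossings is in particular an edge that participates in at most $k$ crossings, so $E_k(\mathcal D) \subseteq \bigcup_{i=0}^{k} E_i(\mathcal D)$, and therefore $e_k(\mathcal D) \le S_k(\mathcal D)$.

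First I would recall the content of Proposition~\ref{teo:nsqrtk}: for any drawing $\mathcal D$ of an $n$-vertex graph and any $k \ge 1$, the subgraph consisting of the edges involved in at most $k$ crossings is $k$-planar, so the Pach--T\'oth bound (Theorem~\ref{teo:k-planar}) gives $S_k(\mathcal D) \le n\sqrt{16.875\,k}$. Chaining this with the inequality from the previous paragraph yields
\[
e_k(\mathcal D) \;\le\; S_k(\mathcal D) \;\le\; n\sqrt{16.875\,k} \;=\; O(n\sqrt{k}),
\]
which is exactly the assertion.

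There is no real obstacle here: the statement follows purely from the monotonicity built into the partial sums of the crossing profile, and no construction or separate combinatorial argument is required. I would just remark, as the authors do for Proposition~\ref{teo:nsqrtk}, that one could instead apply the crossing lemma directly to the subgraph of edges with at most $k$ crossings and reach the same conclusion with a worse absolute constant; since only the asymptotic order is claimed, either route suffices.
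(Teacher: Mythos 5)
Your proposal is correct and is essentially identical to the paper's argument: the paper also derives the proposition immediately from the chain $e_k(\mathcal D) \le S_k(\mathcal D) \le n\sqrt{16.875k}$, with the last inequality given by Proposition~\ref{teo:nsqrtk}. Nothing is missing.
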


\begin{corollary}
    For every $n\geq 1$ and $k\geq 1$, $\overline\max\ e_k(K_n)= O(n\sqrt{k})$.
\end{corollary}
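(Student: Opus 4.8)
The statement is an immediate corollary of the Proposition stated just above it, so the plan is simply to specialize that Proposition to $G = K_n$ and then restrict attention to rectilinear drawings. Concretely, since $K_n$ is a simple graph on exactly $n$ vertices, the Proposition guarantees that for \emph{every} drawing $\mathcal{D}$ of $K_n$ and every $k \ge 1$ one has $e_k(\mathcal{D}) = O(n\sqrt{k})$, where the implied constant is absolute (one may take it to be $\sqrt{16.875}$, inherited from Theorem~\ref{teo:k-planar} through the chain $e_k(\mathcal{D}) \le S_k(\mathcal{D}) \le n\sqrt{16.875\,k}$ used in the proof of Proposition~\ref{teo:nsqrtk}).

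Next I would note that, by definition, a rectilinear drawing of $K_n$ is in particular a drawing of $K_n$, so the collection of rectilinear drawings of $K_n$ is contained in the collection of all drawings of $K_n$. Taking the maximum of $e_k(\cdot)$ over the former, smaller collection therefore yields a value no larger than the bound $O(n\sqrt{k})$ that already holds over the latter; that is, $\overline\max\ e_k(K_n) = O(n\sqrt{k})$, as claimed.

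There is essentially no obstacle to overcome here: all of the content lives in Proposition~\ref{teo:nsqrtk}, which itself rests on the Pach--Tóth bound for $k$-planar graphs (Theorem~\ref{teo:k-planar}) applied to the subgraph formed by the edges involved in at most $k$ crossings, together with the trivial inequality $e_k(\mathcal{D}) \le S_k(\mathcal{D})$. The only point worth flagging is that this upper bound should \emph{not} be expected to match the lower bound $\overline\max\ e_k(K_n) \ge \Omega(n)$ of Theorem~\ref{teo:lin e_k} when $k$ is large, so the corollary does not by itself determine the asymptotics of $\overline\max\ e_k(K_n)$; it merely records the best general upper estimate obtainable from the $k$-planarity machinery.
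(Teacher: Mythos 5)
Your proposal is correct and matches the paper's (implicit) argument exactly: the corollary is obtained by specializing Proposition~\ref{teo:nsqrtk} to $K_n$ via the chain $e_k(\mathcal D)\le S_k(\mathcal D)\le n\sqrt{16.875k}$ and observing that rectilinear drawings form a subclass of all drawings. Nothing further is needed.
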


The issue of closing the gap between the lower and upper bounds for $\overline\max\ e_k(K_n)$ will be discussed further in Section \ref{sec:final}.

\subsection{About \texorpdfstring{$\overline\max\ {e_1(K_n)}$}{TEXT}}

Recall from Section~\ref{sec:e0} that $\overline\max\ {e_0(K_n)}=2n-2$. In contrast, we do not know the exact value of $\overline\max\ {e_1(K_n)}$, but we have the following bounds. 

\begin{claim}For $n \ge 8$,
\begin{align}
\left\lceil\frac{3n}{2} \right\rceil - 7 \le \overline\max\ {e_1(K_n)} \le 2n + \left\lfloor \frac{n-1}{2} \right\rfloor + 7. \nonumber
\end{align}
\end{claim}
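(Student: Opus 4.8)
The upper bound is immediate: for $n \ge 8$ every edge with exactly one crossing is an edge with at most one crossing, so $e_1(\mathcal D) \le S_1(\mathcal D)$ for every rectilinear drawing $\mathcal D$ of $K_n$, and hence $\overline{\max}\ e_1(K_n) \le \max S_1(K_n) \le 2n + \lfloor (n-1)/2 \rfloor + 7$ by Theorem~\ref{teo:S1}. So the entire content of the claim is the lower bound $\overline{\max}\ e_1(K_n) \ge \lceil 3n/2 \rceil - 7$, and the plan is to exhibit an explicit rectilinear drawing of $K_n$ with at least that many singly-crossed edges.

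The natural starting point is Ringel's extremal configuration (Figure~\ref{fig1}), which gives $2n-2$ uncrossed edges but, as far as singly-crossed edges go, likely has far fewer than $3n/2$ of them. Instead I would build a construction in the spirit of the $k=0$ case but tuned to produce crossings of multiplicity exactly one in bulk. The cleanest way I can see to manufacture a linear number of singly-crossed edges is to take a point set that is \emph{almost} convex: place most vertices on a convex arc (or convex polygon) so that those "long" convex-hull-type edges stay uncrossed, and then introduce a controlled family of "short" chords that pairwise cross in a matched, non-interfering way. Concretely, one can split the vertex set into roughly $n/2$ "base" points on a flat convex arc together with, for each consecutive pair, a partner point placed just inside, exactly as in the building block of Theorem~\ref{teo:lin e_k} specialized to $m=3$ (so $k=1$): there each "interior diameter" $\overline{Q_iQ_{i+j}}$ is crossed exactly once. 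Replicating and, crucially, \emph{nesting} such blocks along a convex arc (the grouping idea of Figure~\ref{fig4}) keeps the blocks from interfering, so each block contributes $\Theta(1)$ singly-crossed edges while the global convex structure contributes $\Theta(n)$ uncrossed edges that we don't need but don't hurt. To push the constant up to $3/2$ rather than $1$, I would be more careful in the local gadget: a short chord and the pair of edges it crosses can, with the right placement of the inner points, \emph{both} be made to have crossing number exactly one (a crossed chord whose unique crossing partner is itself only crossed once), so each gadget of constant size yields $3$ singly-crossed edges per $2$ vertices consumed, giving the $3n/2$ rate. The $-7$ is the usual slack from leftover vertices that must be parked somewhere (say in a tiny cluster near the center, as in the proof of Theorem~\ref{teo:lin e_k}) without disturbing the rest.

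So the key steps, in order, are: (1) record the trivial inequality $\overline{\max}\ e_1(K_n) \le \max S_1(K_n)$ and invoke Theorem~\ref{teo:S1} for the upper bound; (2) describe the constant-size gadget on a flat convex arc in which three designated edges each receive exactly one crossing and no other edge of the gadget is crossed; (3) verify the crossing count inside a single gadget by the same angular/ray analysis used in Theorem~\ref{teo:lin e_k}, checking in particular that the "1-crossing" edges really have crossing number $1$ and not $2$; (4) tile $\Theta(n)$ such gadgets along a large convex arc using the nesting scheme of Figure~\ref{fig4}, arguing (by flattening each sub-arc enough) that edges between different gadgets never cross a designated edge; (5) dispose of the $O(1)$ leftover vertices in a central cluster and confirm this adds only a bounded number of new crossings, none of them on a designated edge; (6) total up: $3$ designated edges per gadget, $\lfloor n/(\text{block size})\rfloor$ gadgets, minus a bounded correction, yields at least $\lceil 3n/2\rceil - 7$.

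The main obstacle is step (3) together with the interference analysis in step (4): it is easy to get edges that are crossed \emph{at least} once, but the claim needs them crossed \emph{exactly} once, and a crossing between two designated chords is "shared," so one must make sure the three designated edges per gadget really are distinct edges each with a unique crossing — which constrains how the inner points of adjacent gadgets sit relative to one another. Getting the constant to be exactly $3/2$ (and not merely some $c>1$) is where the bookkeeping has to be done honestly; if the three-per-gadget packing cannot be realized cleanly, a fallback is a two-per-gadget (two vertices, two singly-crossed edges) construction giving the weaker but still linear bound $n - O(1)$, though that would not match the stated $\lceil 3n/2\rceil - 7$. I would therefore spend most of the effort pinning down the exact local gadget and verifying, via the ray-ordering bookkeeping already developed for Theorem~\ref{teo:lin e_k}, that every designated edge has crossing multiplicity precisely one.
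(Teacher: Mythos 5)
Your upper bound is exactly the paper's argument ($e_1(\mathcal D)\le S_1(\mathcal D)$ plus Theorem~\ref{teo:S1}) and is fine; the gap is in the lower bound. Your plan is to tile constant-size gadgets along a convex arc, isolated from one another as in Figure~\ref{fig4}, each gadget contributing three singly-crossed edges per two vertices. But you never actually construct such a gadget, and the arithmetic of an isolated-block tiling is unforgiving: if each block on $g$ vertices contributes only $\tfrac{3g}{2}-c$ singly-crossed edges for some constant $c\ge 1$, then $\lfloor n/g\rfloor$ blocks yield about $\left(\tfrac32-\tfrac{c}{g}\right)n$, a \emph{linear} rather than constant loss, so $\lceil 3n/2\rceil-7$ is missed. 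To make the tiling work you would need a fixed-size rectilinear drawing of $K_g$ with at least $\lceil 3g/2\rceil$ edges having exactly one crossing, all of whose crossings come from edges internal to the block; no such drawing is exhibited (small cases are hopeless: $\overline{\max}\ e_1(K_4)=2$, and every rectilinear drawing of $K_5$ has at most two singly-crossed edges), and whether one exists at all is essentially the open problem the paper poses right after this claim. The block you do cite, the building block of Theorem~\ref{teo:lin e_k} for $k=1$ (which is $m=2$, not $m=3$), gives one singly-crossed edge per four vertices, i.e.\ rate $1/4$. Your own fallback ("two per gadget", giving $n-O(1)$) concedes that the stated constant $3/2$ is not reached by this route.

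The paper gets the $3/2$ rate with only an additive loss by a \emph{global}, not local, construction: for $n=4m$ it places $P_1,\dots,P_m$ on a flat arc with a companion $Q_i$ near each gap, adds one far-away apex $A$ whose edges $\overline{AQ_i}$ thread between $P_i$ and $P_{i+1}$, and then reflects the whole picture in a nearby parallel line to obtain a mirrored arc and apex $B$. The unique crossers of the short edges $\overline{P_iQ_i}$ and $\overline{Q_{i-1}P_i}$ are the long edges $\overline{AQ_{i+m}}$ and $\overline{AP_{i+m}}$ running from an apex to the \emph{opposite} arc, while each $\overline{AQ_i}$ is crossed only by $\overline{P_iP_{i+1}}$; this yields $6m-6=\tfrac32(n-4)$ singly-crossed edges, and the residues $n\equiv 1,2,3\pmod 4$ are handled by deleting up to three points. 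The essential idea missing from your proposal is that the crossing partners must be such long-range edges shared across the whole configuration — precisely what an isolated-gadget tiling forbids, since isolating the gadgets forces every designated edge to be crossed only from within its own constant-size block.
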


\begin{proof}
First, note that Theorem~\ref{teo:S1} directly yields 
\begin{align}
\overline\max\ S_1(K_n) \leq \max S_1(K_n)\le 2n + \left\lfloor \frac{n-1}{2} \right\rfloor + 7. \nonumber
\end{align}

Next, we present a rectilinear drawing of $K_n$ attaining the claimed lower bound. We begin by handling the case where $n \equiv 0 \pmod{4}$, setting $n = 4m$. 

Place $m$ equally spaced vertices $P_1, P_2, \ldots, P_m$ on a sufficiently flat upwards-facing minor circular arc in clockwise order. Now, we place a vertex $A$ so that it lies far away from $\overline{P_1P_m}$ and is on the same side of $\overleftrightarrow{P_1P_m}$ as the rest of the $P_i$'s, and the projection of $A$ onto $\overline{P_1P_m}$ falls to the left of $P_1$. Next, add vertices $Q_1, Q_2, \ldots, Q_{m-1}$ so that $\overline{AQ_i}$ crosses and bisects $\overline{P_iP_{i+1}}$ and $Q_i$ lies sufficiently close to $\overline{P_iP_{i+1}}$ for $i \in [m-1]$. Finally, we add a line $l$ that lies parallel, below, and very close to $\overline{P_1P_m}$ and reflect all existing vertices with respect to $l$. We write $P_i \mapsto P_{i + m}$, $Q_i \mapsto Q_{i+m}$, and $A \mapsto B$ to denote the images of the original nodes under this reflection. See Figure~\ref{fig5}.

Under this setup, it is clear that $\overline{AQ_{i+m}}$ is the only edge crossing $\overline{P_iQ_i}$ for $i \in [m-1]$, $\overline{AP_{i+m}}$ is the only edge crossing $\overline{Q_{i-1}P_i}$ for $i \in [2, m]$, and $\overline{P_iP_{i+1}}$ is the only edge crossing $\overline{AQ_i}$ for $i \in [m-1]$. Symmetrically, we find $\overline{BQ_i}$ is the only edge that crosses $\overline{P_{i+m}Q_{i+m}}$ for $i \in [m-1]$, $\overline{BP_i}$ is the only edge that crosses $\overline{Q_{i+m-1}P_{i+m}}$ for $i \in [2, m]$, and $\overline{P_{i+m}P_{i+m+1}}$ is the only edge that crosses $\overline{BQ_{i+m}}$ for $i \in [m-1]$. This construction clearly gives $e_1(\overline{\mathcal{D}}_n) = 6m-6 = \frac{3}{2} \left(n - 4 \right)$, as required.

\begin{figure}[H]
\begin{center}
\includegraphics[scale = 0.56]{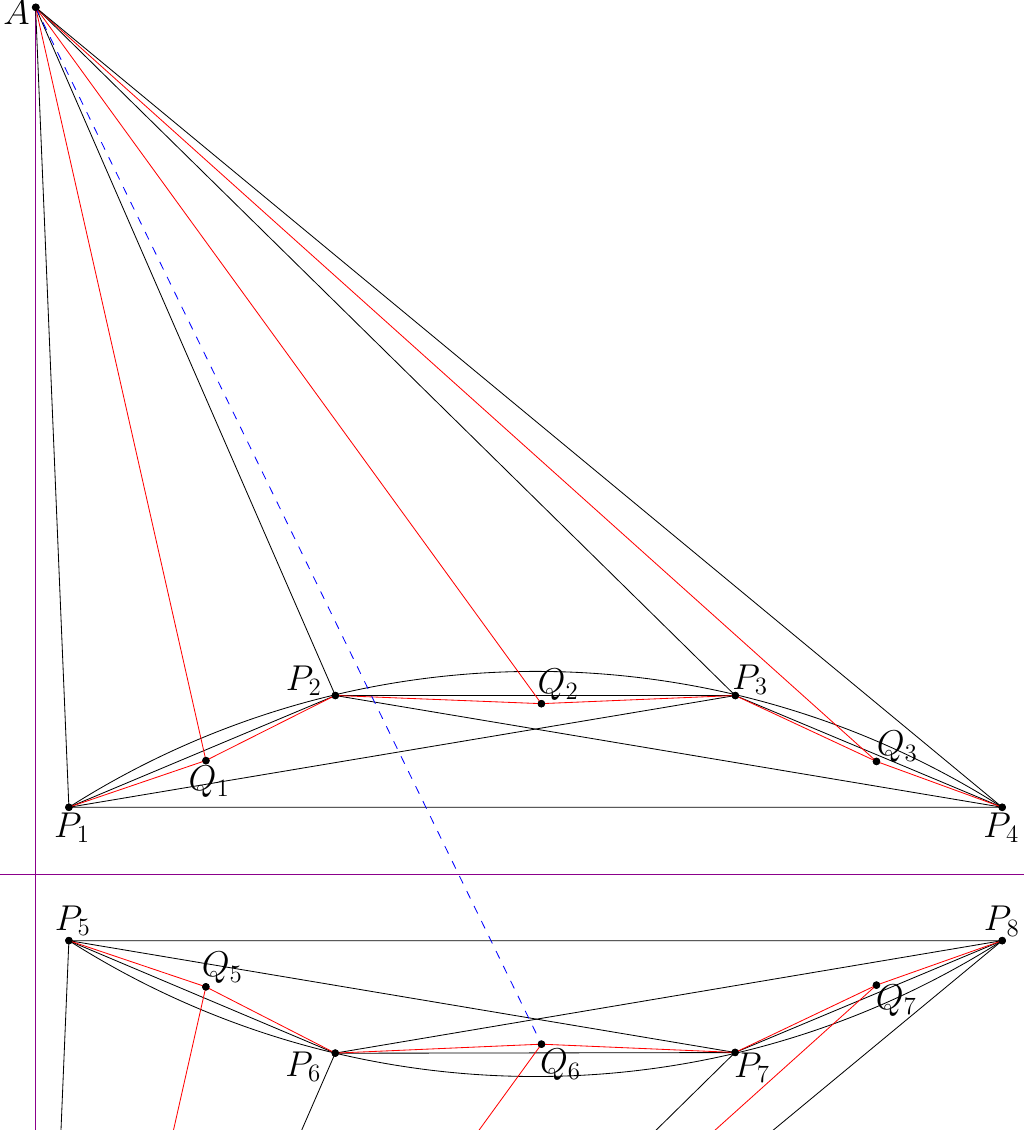}
  \caption{Construction for $n = 16$ where edges with exactly one crossing have been marked in red. Note, for example, that $\overline{AQ_6}$ is the only edge crossing $\overline{P_2Q_2}$.}
  \label{fig5}
\end{center}
\end{figure}

In the cases where $n = 4m-1$, $n = 4m-2$, or $n = 4m-3$, we delete $P_1$, $P_{m+1}$, and $Q_1$ in succession. It is easy to check that in each of these cases we get at least $\left\lceil\frac{3n}{2} \right\rceil - 7$ edges with exactly one crossing.
\end{proof}

We leave the issue of closing the gap between the upper and the lower bounds as an open problem.

\begin{problem}
 Determine $\overline\max\ {e_1(K_n)}$ for all $n$. Less ambitiously, does the limit $\lim_{n\rightarrow \infty}\overline\max\ \frac{e_1(K_n)}{n}$ exist? If so, what is its value? 
\end{problem}

\section{Drawings where no edges have exactly \texorpdfstring{$k$}{TEXT} crossings}\label{sec:mine}

The goal of this section is to prove the following theorem.

\begin{theorem}\label{teo:0}
    There exists a positive integer $N$ such that for any $k\geq 1$ and any $n\geq N$, $\overline\min\ e_k(K_n)=0$.
\end{theorem}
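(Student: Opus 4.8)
The plan is to treat $k$ in three regimes according to its size and arithmetic.

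If $k > \left\lfloor \frac{(n-2)^2}{4}\right\rfloor$ the statement is vacuous: by the remark following Theorem~\ref{teo:lin e_k}, no edge of any rectilinear drawing of $K_n$ can be crossed $k$ times, so every such drawing already has $e_k=0$.

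Now assume $1 \le k \le \left\lfloor\frac{(n-2)^2}{4}\right\rfloor$. If $k$ is \emph{not} of the form $a(n-2-a)$ for any integer $a\in\{0,\dots,n-2\}$, the convex‑position drawing suffices: the endpoints of any edge split the remaining $n-2$ vertices as $\{a,n-2-a\}$ for some $a$, and in convex position every one of the $a(n-2-a)$ eligible pairs crosses that edge while no other edge does, so the edge is crossed exactly $a(n-2-a)$ times; since $k$ is not a value of this form, $e_k=0$. The delicate case is $k=a(n-2-a)$, i.e.\ $n-2=a+b$ for a factorization $ab=k$ with $a\le b$. This case cannot be absorbed by enlarging $N$, since $k=n-3$ (the case $a=1$) is always of this form; thus for every large $n$ and every admissible $a$ one must exhibit a non‑convex drawing in which no edge is crossed exactly $a(n-2-a)$ times.

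For this delicate case I would start from a convex polygon — where the edges realizing the value $k$ are exactly the $\Theta(n)$ ``type‑$a$'' edges whose endpoints split the rest as $\{a,b\}$ — and perturb it into a non‑convex configuration forcing each such edge to a crossing count different from $k$. A natural attempt is to replace the convex polygon by a near‑convex point set with some vertices pulled inside the hull, placed so that the formerly type‑$a$ edges either cease to have the split $\{a,b\}$, or keep it but acquire an eligible pair whose connecting segment fails to cross them (something impossible in convex position, where all eligible pairs necessarily cross). One then has to recount, for \emph{every} edge of the perturbed drawing — not only the formerly type‑$a$ edges, but also the ``bulk'' edges with balanced splits, whose crossing counts can a priori be anything up to their large maxima — to certify that the value $k$ is missed everywhere.

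The arithmetic case is where I expect the real difficulty to lie: one needs a single family of constructions that is uniform in $a$ (so that $N$ does not depend on $k$) together with the combinatorial bookkeeping certifying that no edge is left with exactly $k$ crossings. As in the proof of Theorem~\ref{teo:lin e_k}, I would anticipate casework on the relative sizes and parities of $a$ and $b$, with part of it deferred to an appendix.
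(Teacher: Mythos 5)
Your first two regimes are fine: the vacuous range $k>\bigl\lfloor (n-2)^2/4\bigr\rfloor$ and the observation that a convex-position drawing has $e_k=0$ whenever $k$ is not of the form $a(n-2-a)$ both work, and the latter is exactly the paper's starting observation about the sets $T_m$. But the theorem's entire content lies in the remaining case $k=a(n-2-a)$ (nonempty for every $n$, e.g.\ $k=n-3$), and there your proposal stops at a statement of intent: ``pull some vertices inside the hull, arrange that the type-$a$ edges miss the value $k$, then recount every edge.'' No construction is specified, no crossing counts are computed, and no mechanism is offered that is uniform in $a$; since there are $\Theta(n)$ type-$a$ edges and any perturbation simultaneously changes the counts of many incident and bulk edges, the ``recount'' is precisely the hard part, and nothing in the proposal indicates how to certify that the value $k$ is avoided globally. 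As written, this is a gap, not a proof.

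The paper's route supplies the missing idea, and it is quite different from perturbing a single convex polygon: it works with an explicit \emph{family} of drawings built from two or three flattened convex arcs facing each other ($\mathcal D_{a,b}$ and $\mathcal D_{a,b,c}$), for which the number of crossings of every edge is given by a closed formula in the arc sizes and the positions of the endpoints. One then argues by counting: a given value $k$ can lie in $T_m$ for at most $\tau(k)$ values of $m$, and $\tau(k)<k^{1/10}$ for large $k$, which handles $k\le n$ and $k\ge n^{1.2}$ using the two-arc drawings $\mathcal D_{j,n-j}$ with $j\le\lceil n^{0.2}\rceil$; for the intermediate range $n<k<n^{1.2}$ one takes the $\Theta(n^2)$ three-arc drawings $\mathcal D_{\lfloor n/2\rfloor-i,\lceil n/2\rceil-j,i+j}$ and shows, using the explicit formulas together with monotonicity of the crossing counts in $i$ and $j$, that only $O(n^{1.4})$ of the parameter pairs $(i,j)$ can produce an edge with exactly $k$ crossings, so most members of the family have $e_k=0$. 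If you want to salvage your approach, you would need to replace the vague perturbation step by some comparably concrete family together with a counting or monotonicity argument of this kind; a single ad hoc perturbation per $k$, with parity casework on $a$ and $b$, is unlikely to yield the required uniformity in $k$ and $n$.
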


\begin{proof}
Let $m\geq 2$ be an integer. If the vertices of a rectilinear drawing $\mathcal D$ of $K_m$ are in convex position (i.e., they are the nodes of a convex polygon), then $e_k(\mathcal{D}) = 0$ for all $k$ not in $T_m$, where 
\begin{align}
T_m =\left\{a((m-2) - a) \mid a \in \left[1, 2,\ldots,  \left\lfloor \frac{m-2}{2} \right\rfloor \right] \right\}. \nonumber
\end{align}

For any two nonnegative integers $a,b$ with $a+b=n$, consider a rectilinear drawing $\mathcal D_{a,b}$ of $K_n$ induced by two flattened convex arcs facing each other and having $a$ and $b$ points each. Similarly, for any nonnegative integers $a,b,c$ with $a+b+c=n$, let $\mathcal D_{a,b,c}$ be a rectilinear drawing induced by three flattened convex arcs facing each other and having $a$, $b$ and $c$ points each. In each of these drawings, the arc containing $a$ points will be referred to as the \textit{upper arc}, and the one containing $b$ points will be the \textit{lower arc}. The nodes in the upper and lower arcs will be numbered from left to right, while the nodes in the third arc will always be numbered from top to bottom. See Figure~\ref{fig6} for an example showcasing $\mathcal D_{6,9,3}$.

\begin{figure}[H]
\begin{center}
\includegraphics[scale = 0.5]{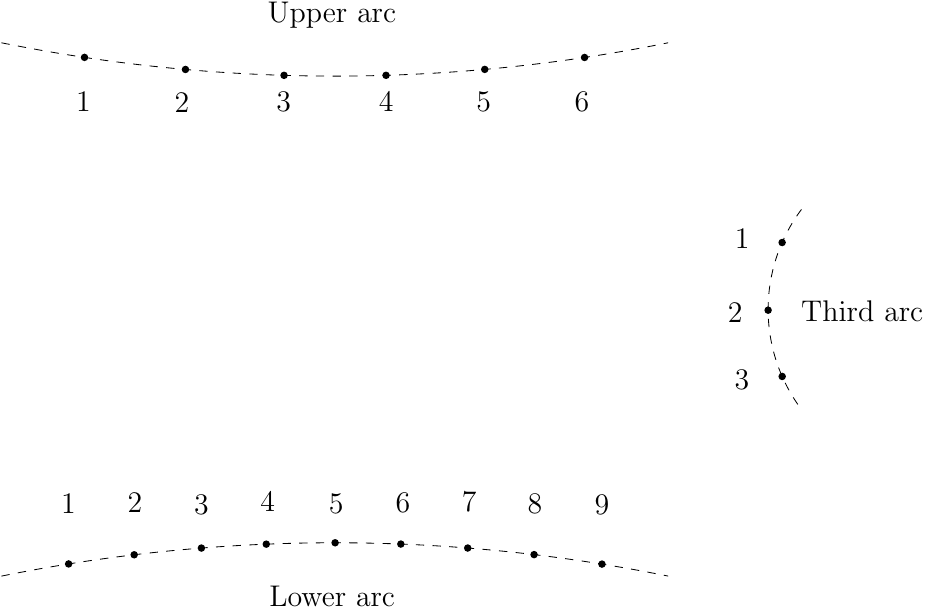}
  \caption{The drawing $\mathcal D_{6,9,3}$ with its vertices numbered.}
  \label{fig6}
\end{center}
\end{figure}

In $\mathcal D_{a,b}$, each edge going between the two arcs will be crossed by at most $(a-1)(b-1)$ other edges, while each edge inside one of the two arcs will be crossed by $t$ edges, where $t\in T_a\cup T_b$. As long as $n$ is not too small, for each $k$ in $\{1,2,\ldots,n\}$, one of $\mathcal D_{0,n}$, $\mathcal D_{1,n-1}$ contains no edges with exactly $k$ crossings. Henceforth, we assume that $k>n$. It is not hard to see that for any $k$ there are at most $\tau(k)$ positive integers $m$ such that $k\in T_m$, where $\tau(k)$ denotes the number of divisors of $k$. We will make use of an elementary fact from analytic number theory. Namely, for any $\epsilon>0$ there exists a positive integer $N_\epsilon$ such that $\tau(k)<k^\epsilon$ for all $k\geq N_\epsilon$~\cite{apostol2013introduction}. Set $\epsilon=1/10$ and take $N\geq N_\epsilon$. Consider the drawings $\mathcal D_{0,n},\mathcal D_{1,n-1},\ldots, \mathcal D_{\lceil n^{0.2}\rceil,n-\lceil n^{0.2}\rceil}.$ If $n\geq N$ then any $k$ with $n<k< n^2$ satisfies $\tau (k)< k^{0.1}< n^{0.2}$. Thus, for any such $k$, at least one of $T_{n},T_{n-1}\ldots,T_{n-\lceil n^{0.2}\rceil}$ does not contain it; let $T_j$ be any such set. Since $n-j\leq\lceil n^{0.2}\rceil$, it follows by bounding its largest element that $T_{n-j}$ also does not contain $k$ (since $k>n$). Furthermore, if $k\geq n^{1.2}$, then we also have that $k> (j-1)(n-j-1)$, so no edge in $\mathcal D_{j,n-j}$ has precisely $k$ crossings.

We are left to prove the statement for $n<k< n^{1+0.2}$, and for the rest of the proof we assume that this holds. Consider now the drawings of the form $\mathcal D(i,j)=\mathcal D_{\lfloor n/2\rfloor-i,\lceil n/2\rceil-j,i+j}$ for $n/100\leq i,j< n/50$. We will prove that at least one of the $n^2/1000$ $D(i,j)$'s has $0$ edges with exactly $k$ crossings. Let us analyze the different kinds of edges in these drawings.

\begin{itemize}
    \item If an edge connects two points in the same flattened convex arc, then the number of crossings it is involved in belongs to the set \[T_{\lfloor n/2\rfloor-i}\cup T_{\lceil n/2\rceil-j}\cup T_{i+j}.\] Hence, there are at most $3\tau(k)n/100\leq O(n^{1.2})$ pairs $(i,j)$ for which one of these edges has exactly $k$ crossings.

    \item If an edge connects the $m^{\text{th}}$ vertex in the upper arc to the $l^{\text{th}}$ vertex in the lower arc (see Figure~\ref{fig7}), then it crosses exactly \[(m-1)(\lceil n/2\rceil-j-l)+(l-1)(\lfloor n/2\rfloor-i-m) +(i+j)(m+l-2)\] other edges. In order for this quantity to be $k$, it must be the case that \[l+m=O\left(\frac{k}{n}\right),\] and that at least one of $l$ and $m$ is not equal to $1$. From here, we get that there are at most $O(k^2/n^2)=O(n^{0.4})$ pairs $(m,l)$ for which the edge has any chance of being involved in exactly $k$ crossings. Furthermore, once the pair $(m,l)$ is fixed, the number of crossings the edge is involved in is strictly increasing in $i$ as long as $m>1$, and it is strictly increasing in $j$ if $l>1$. It follows that there are at most $O((n(k/n)+n^{0.4})=O(n^{1.2})$ pairs $(i,j)$ for which an edge connecting the lower and upper arcs has exactly $k$ crossings.

    \item If the edge connects the $m^{\text{th}}$ vertex of the upper arc to the $l^{\text{th}}$ vertex from the third arc (see Figure~\ref{fig7}), then it crosses exactly \[(\lfloor n/2\rfloor-i-m)(\lceil n/2\rceil+i-l)+(l-1)(\lceil n/2\rceil-j+m-1)\] other edges. This time, in order for this quantity to equal $k$, we must have that \[(\lfloor n/2\rfloor-i-m)+(l-1)=O\left(\frac{k}{n}\right).\] Thus, there are at most $O(n^{0.4})$ for what the pair $(\lfloor n/2\rfloor-i-m,l-1)$ can be. If we fix such a pair so that $l-1\geq 1$, and then also fix $i$, then the above expression is strictly decreasing as $j$ increases. On the other hand, if after fixing the pair we have that $l-1=0$, then the expression does not depend on $j$ and is decreasing in $i$. Hence, there are at most $O(n^{0.4}\cdot n+n(k/n))=O(n^{1.4})$ pairs $(i,j)$ for which $\mathcal(i,j)$ contains an edge with $k$ crossings connecting the upper and third arcs.

    \item If the edge connects the $m^{\text{th}}$ vertex of the lower arc to the $l^{\text{th}}$ vertex from the third arc (see Figure~\ref{fig7}), then it crosses exactly \[(\lceil n/2\rceil-j-m)(\lceil n/2\rceil-i+l-1)+(i+j-l)(\lfloor n/2\rfloor-i+m-1)\] other edges. Now, if this equals $k$, then \[(\lceil n/2\rceil-j-m)+(i+j-l)=O\left(\frac{k}{n}\right).\] Thus, there are at most $O(n^{0.4})$ pairs $(\lceil n/2\rceil-j-m,i+j-l)$ for which the edge could have exactly $k$ crossings. If we fix such a pair such that $i+j-l>0$, and then fix $j$, the expression becomes strictly decreasing with $i$. If, after fixing the pair, $i+j-l=0$, then the expression is independent of $i$ and is decreasing in $j$. Just as above, there are at most $O(n^{1.4})$ pairs $(i,j)$ for which $\mathcal(i,j)$ contains an edge with $k$ crossings which connect the lower arc to the third arc.

\begin{figure}[H]
\begin{center}
\includegraphics[scale = 0.75]{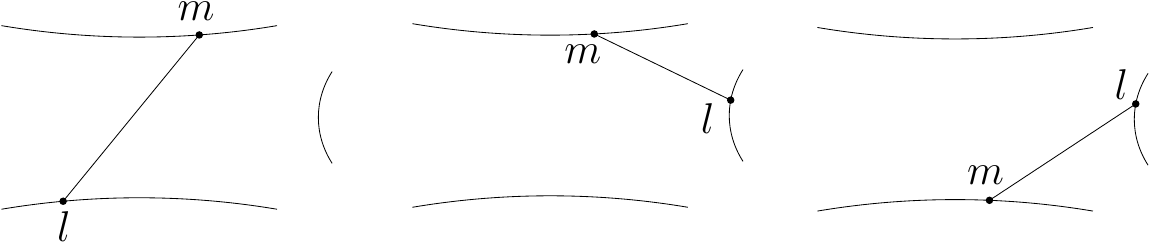}
  \caption{Left: An edge connecting the upper and lower arcs. Middle: An edge connecting the upper and third arcs.
  Right: An edge connecting the lower and third arcs.}
  \label{fig7}
\end{center}
\end{figure}

\end{itemize}

Putting everything together, we get that as long as $n$ is sufficiently large then at least one (in fact, most) of the drawings contains no edge involved in exactly $k$ crossings. This concludes the proof.
\end{proof}

\section{On \texorpdfstring{$\overline\max\ S_k(K_n)$}{TEXT}}\label{sec:maxS}

In this section we determine the asymptotic behavior of $\overline\max\ S_K(K_n)$ up to multiplicative constants.

\begin{theorem}\label{teo:maxS}
    For all $k$ with $1\leq k\leq \left( \frac{n-2}{2} \right)^2$, $\overline\max\ S_k(K_n) = \Theta(n \sqrt{k})$.
\end{theorem}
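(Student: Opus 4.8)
The upper bound requires nothing new: Proposition~\ref{teo:nsqrtk} already gives $S_k(\mathcal D)\le n\sqrt{16.875\,k}=O(n\sqrt k)$ for every drawing $\mathcal D$ of $K_n$ and every $k\ge 1$, so $\overline{\max}\ S_k(K_n)=O(n\sqrt k)$. The whole task is to construct, for each $k$ with $1\le k\le\left(\frac{n-2}{2}\right)^2$, a rectilinear drawing $\mathcal D$ of $K_n$ having $\Omega(n\sqrt k)$ edges that are crossed at most $k$ times.

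The plan is to adapt the ``many non-interfering copies of a small gadget'' scheme from the proof of Theorem~\ref{teo:lin e_k}, but with a much simpler gadget. I would set $m=2\lceil\sqrt k\rceil$ and $t=\lfloor n/m\rfloor$; since $k\le\left(\frac{n-2}{2}\right)^2$ forces $m\le n$, we have $t\ge 1$. The gadget is just $m$ points in convex position on a very flat circular arc: writing $P_1,\dots,P_m$ for these points in the order they occur along the arc, the edge $\overline{P_aP_b}$ is crossed by exactly $(b-a-1)(m-(b-a)-1)\le \lfloor (m-2)/2\rfloor\,\lceil (m-2)/2\rceil\le k$ other gadget edges, this last inequality being the reason for the choice of $m$. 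I would then take $t$ copies of the gadget (plus one extra blob holding the at most $m$ leftover vertices of $K_n$), shrink each copy to a tiny blob, and place the blobs at the vertices of a convex polygon, orienting each blob's arc so that the direction of every edge internal to that blob lies inside the (nonempty) cone of directions of supporting lines of the polygon at the corresponding vertex. The key claim, to be verified by a short geometric case analysis, is that once the blobs are small enough and the arcs flat enough, no edge internal to a blob $B$ is crossed by any edge of $\mathcal D$ lying outside $B$: an edge between two other blobs is essentially a side or diagonal of the polygon and misses $B$ entirely, whereas an edge from a vertex of $B$ to a vertex outside $B$ stays --- in the vicinity of $B$ --- on the side of the supporting line carrying the internal edge that contains the rest of the polygon, so it can meet that internal edge only at a common endpoint. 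Granting this, every internal edge of every blob is crossed at most $k$ times in $\mathcal D$, whence
\[
S_k(\mathcal D)\ \ge\ t\binom m2\ =\ \Omega\!\left(\tfrac{n}{\sqrt k}\right)\cdot\Omega(k)\ =\ \Omega(n\sqrt k)
\]
(in the degenerate regime $n=O(\sqrt k)$, where $t$ may only be a constant, the single term $\binom m2=\Theta(k)=\Omega(n\sqrt k)$ already suffices). Combined with the upper bound, this gives $\overline{\max}\ S_k(K_n)=\Theta(n\sqrt k)$.

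The hard part will be the geometric bookkeeping behind the ``no external crossings'' claim: showing that a sufficiently flat arc has all of its chord directions confined to the prescribed cone, and that the intersection of an external edge with the relevant supporting line --- which lies within a tiny neighborhood of a blob vertex --- never falls on the internal segment itself. This is morally the same computation that shields the interior diameters in the construction of Theorem~\ref{teo:lin e_k} (see Figure~\ref{fig4}), only carried out for all internal edges of the gadget at once; I would organize it by fixing the convex polygon and the cones first, then choosing the common blob diameter, and finally the radii of the arcs, each small (resp.\ large) enough in turn. Everything else --- the inequalities controlling $m$ and $t$, the disposal of the leftover vertices, and the convex-position crossing count --- is routine.
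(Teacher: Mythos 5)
Your proposal is correct and follows essentially the same route as the paper: the upper bound is quoted from Proposition~\ref{teo:nsqrtk}, and the lower bound comes from roughly $n/m$ non-interfering copies (with $m=\Theta(\sqrt k)$) of a flat convex arc of $m$ points, each internal edge having at most $\left(\frac{m-2}{2}\right)^2\le k$ crossings, giving $\lfloor n/m\rfloor\binom m2=\Omega(n\sqrt k)$. The paper realizes the shielding by erecting flat inward-facing arcs over disjoint chords of a circle (with leftovers clustered at the center), which is the same mechanism as your supporting-line/cone argument, so the differences are only cosmetic.
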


\begin{proof}
In light of Proposition~\ref{teo:nsqrtk}, it suffices to describe a rectilinear drawing $\mathcal D$ of $K_n$ with $S_k(\mathcal D)=\Omega(n\sqrt k).$

Let $m = 2 \lfloor \sqrt{k} \rfloor + 2$ and observe that $k \le \left( \frac{n-2}{2} \right)^2$ ensures $m \le n$. We choose $l = \lfloor \frac{n}{m} \rfloor$ congruent pairwise non-intersecting chords $a_1, a_2, \ldots, a_l$ of a circle $C$ such that they are spaced out evenly around $C$. For each $a_i$, erect a circular arc $c_i$ that touches the endpoints of $a_i$ and faces the center of the original circle, and let $R_i$ denote the finite region bounded by $a_i$ and $c_i$. Finally, add $m$ distinct vertices to each $c_i$ and place any leftover vertices in a cluster near the center of $C$.

We can clearly make each $c_i$ sufficiently flat so that the interior of $R_i$ is intersected only by edges with both vertices belonging to $c_i$ (see Figure~\ref{fig8}). Now, it is easy to see that any edge between two vertices on $c_i$ is crossed at most
\begin{align}
\left( \frac{m - 2}{2} \right)^2 = \lfloor \sqrt{k} \rfloor^2 \le k \nonumber
\end{align}
times, whence there are at least
\begin{align}
l \binom{m}{2} = \Omega(n \sqrt{k}) \nonumber
\end{align}
edges in $\mathcal{D}$ that are crossed at most $k$ times, as desired.
\end{proof}

\begin{figure}[H]
\begin{center}
\includegraphics[scale = 0.48]{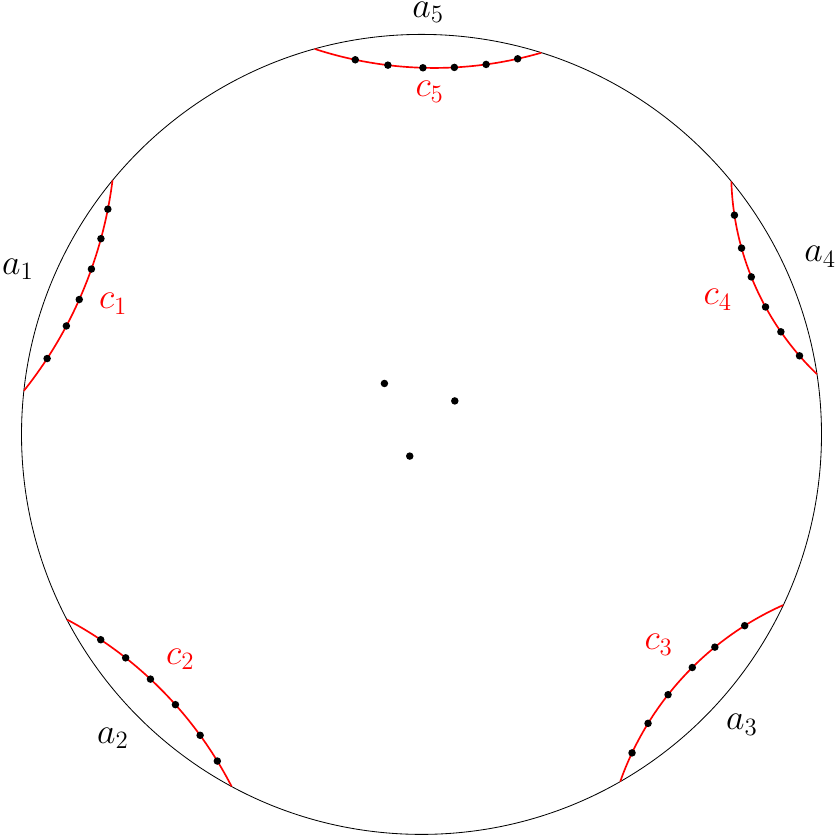}
  \caption{Construction attaining $S_k(\mathcal{D}) = \Omega(n \sqrt{k})$ for $k \in [4, 8]$, $m = 6$, and $n = 33$.}
  \label{fig8}
\end{center}
\end{figure}

\section{Matching upper and lower bounds for \texorpdfstring{$\overline\min\ S_k(K_n)$}{TEXT}}\label{sec:minS}

In this section we will prove the following result, which characterizes the asymptotic behavior of $\overline\min\ S_k(K_n)$ up to multiplicative constants.

\begin{theorem}\label{teo:minS}
    Let $n\geq 3$ and $k\leq\binom{n}{2}$. Then, we have that \[\overline\min\ S_k(K_n)=\begin{cases*}
    \Theta(1) & when $ k\leq n$,\\
    \Theta\left( \left( \frac{k}{n} \right)^2 \log{\left( \frac{k}{n} \right)} \right) & when $n<k\leq n^{3/2}$,\\
    \Theta\left( \left( \frac{k}{n} \right)^2 \log{\left( \frac{n^2}{k} \right)} \right) & when $n^{3/2}< k$.
\end{cases*} \]

\end{theorem}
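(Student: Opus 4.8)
The plan is to prove matching upper and lower bounds in each of the three ranges; the range $k\le n$ is essentially free, and the other two are handled by a common two-pronged scheme, with the breakpoint $k=n^{3/2}$ marking where one argument becomes more efficient than the other.

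\textbf{The range $k\le n$.} Since $S_k(\mathcal D)$ is non-decreasing in $k$, the lower bound is immediate: $S_k(\mathcal D)\ge S_0(\mathcal D)\ge\overline\min\ e_0(K_n)\ge 5$ for $n\ge 8$ by Theorem~\ref{Thurmann}, and small $n$ is checked by hand, so $\overline\min\ S_k(K_n)=\Omega(1)$. For the upper bound, monotonicity in $k$ reduces everything to exhibiting a single rectilinear drawing of $K_n$ in which only $O(1)$ edges are crossed at most $n$ times. I would build such a ``uniformly heavily crossed'' drawing from a point set whose convex hull is a triangle (so only the three hull edges are forced to be uncrossed) and whose outermost convex layers are packed tightly, so that every segment save a bounded number has both open half-planes of its supporting line densely populated near the segment itself and therefore absorbs $\Omega(n)$ crossings; this is a quantitative strengthening of the Harborth--Th\"urmann extremal configuration for $e_0$. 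I would also record the routine checks that the three formulas agree at $k$ of order $n$ and $k$ of order $n^{3/2}$, and that at $k=\binom n2$ one gets $\Theta(n^2)=S_k$.

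\textbf{Lower bounds for $k>n$.} The geometric engine is the elementary fact that if a segment $\overline{P_iP_j}$ lies inside a planar region $R$, then every edge crossing it has supporting line meeting $R$, so $\overline{P_iP_j}$ has at most as many crossings as there are lines among $\{\overleftrightarrow{P_aP_b}\}$ that meet $R$. Feeding these $\binom n2$ lines into the cutting lemma (Theorem~\ref{teo:cutting}) with parameter $t$ of order $n^2/k$ yields $O(n^4/k^2)$ generalized triangles each met by at most $k$ lines; every edge with both endpoints in a common triangle is then crossed at most $k$ times, and a convexity bound on how few such edges can arise when $n$ points are distributed among $O(n^4/k^2)$ triangles gives $\Omega((k/n)^2)$ low-crossing edges \emph{provided} the triangle count is $O(n)$, i.e. provided $k\gtrsim n^{3/2}$. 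To recover the extra logarithm I would not apply this once but across a geometric sequence of thresholds/finenesses, peeling off a new $\Omega((k/n)^2)$ worth of previously uncaptured low-crossing edges at each of the $\Theta(\log(n^2/k))$ usable scales. In the middle range $n<k\le n^{3/2}$ the pigeonhole step fails (too many triangles, and the points may spread out), so I would instead bootstrap from the classical lower bound $\Omega((k/n)^2)$ on the number of $(\le\lfloor k/n\rfloor)$-edges of any planar point set --- each such edge is crossed at most $\lfloor k/n\rfloor(n-2)\le k$ times --- and combine it with a cutting-lemma stratification over the shallowness parameter to promote this to $\Omega((k/n)^2\log(k/n))$. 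The two logarithms $\log(k/n)$ and $\log(n^2/k)$ are precisely the numbers of useful scales in the shallow-edge and the cutting-based arguments, and $k=n^{3/2}$ is where they balance.

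\textbf{Upper bounds for $k>n$.} Here I would use hierarchical point configurations. Split the $n$ vertices into a bounded number of clusters placed near the vertices of a large triangle; between two clusters all but $O((k/n)^2)$ of the connecting segments are interior to the bundle and cross $\Omega(n^2)$ others, which comfortably exceeds $k$ in the ranges at hand, so only $O((k/n)^2)$ inter-cluster segments are low-crossing. Then recurse inside each cluster, halting once clusters shrink to of order $\sqrt k$ vertices, at which point every segment inside a cluster is automatically crossed at most $\lfloor\sqrt k\rfloor^2\le k$ times, exactly as in the proof of Theorem~\ref{teo:maxS}. The per-level contributions form a geometric series, and choosing the branching factor and the stopping level as functions of $k$ makes it sum to $O((k/n)^2\log(k/n))$ for $n<k\le n^{3/2}$ and to $O((k/n)^2\log(n^2/k))$ for $k>n^{3/2}$; alternatively the upper bound can be obtained by directly analysing an extremal $(\le\lfloor k/n\rfloor)$-set configuration and bounding its low-crossing but non-shallow edges. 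The main obstacle, on both sides, is the logarithmic factor: on the lower-bound side one must guarantee that each new scale contributes a genuinely fresh $\Omega((k/n)^2)$ rather than recounting edges already found --- a careless iteration only reproduces the single-scale bound $\Omega((k/n)^2)$ --- and dually the recursive construction must be tuned so that its deepest levels do not dominate, since the naive version degenerates into the trivial bound $O(n\sqrt k)$ of Proposition~\ref{teo:nsqrtk}. A secondary nuisance is keeping the cutting-lemma estimate and the $(\le\ell)$-edge estimate in sync so that the two sub-regime bounds meet cleanly at $k=n^{3/2}$, uniformly in the hidden constants.
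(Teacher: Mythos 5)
Both halves of your plan stop short of the actual content of the theorem, which is the logarithmic factor, and on the construction side the sketch as written would not work. For the lower bound, your single-scale arguments are fine: cutting all $\binom n2$ lines with $t\approx n^2/k$ (for $k\gtrsim n^{3/2}$), or counting $(\le\lfloor k/n\rfloor)$-edges (for $n<k\le n^{3/2}$), each give $\Omega((k/n)^2)$ edges with at most $k$ crossings. But the "peeling across a geometric sequence of scales" step is only asserted, and you yourself flag that a careless iteration merely recounts the same $\Omega((k/n)^2)$ edges. The paper resolves exactly this point, and not by varying the cutting parameter or the shallowness threshold: freshness is obtained \emph{spatially}. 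A sweep line cuts off $r$ pairwise disjoint vertical strips containing $2k/n,4k/n,\ldots,2^rk/n$ vertices; a lemma (the cutting lemma applied only to the $\Theta(mn)$ lines spanned by edges meeting a strip with $m$ points, with parameter $t=|L'|/k$) produces $\Omega((k/n)^2)$ low-crossing edges with \emph{both endpoints inside} that strip, so the contributions of distinct strips are automatically disjoint. The constraints that each strip hold at most $\frac1{C_1}(k/n)^2$ points and that the strips together hold at most $n$ points are precisely what make the number of usable strips $\Theta(\min\{\log(k/n),\log(n^2/k)\})$, which is where the two logarithms and the breakpoint $k=n^{3/2}$ come from. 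Without a mechanism of this kind your lower bound stalls at $\Omega((k/n)^2)$.

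The upper bound is the more serious gap. In your recursive cluster construction you halt at clusters of $\Theta(\sqrt k)$ vertices and observe that intra-leaf edges are crossed at most $\lfloor\sqrt k\rfloor^2\le k$ times from within the leaf; but for $\overline\min\ S_k$ these are exactly the edges you must \emph{prevent} from being low-crossing. Unless edges from outside each leaf push them above $k$ crossings, you have put $(n/\sqrt k)\binom{\sqrt k}{2}=\Theta(n\sqrt k)$ edges into $S_k$, i.e.\ only the trivial bound of Proposition~\ref{teo:nsqrtk}; and even if one argues that edges leaving a leaf cross most intra-leaf edges $\Omega(n)$ times, the intra-leaf edges with $O(k/n)$ leaf points on their outward side still contribute $\Theta(k)$ low-crossing edges in total, which exceeds $(k/n)^2\log(k/n)$ throughout $n<k<n^2$. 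Tuning the branching factor cannot repair this, because the defect is having small convex clusters at the bottom at all. The paper's construction is structurally different: $n/3$ nested triangles with alternating orientation, so every "good" pair of vertices (indices of different parity or different corners) is forced into $\Omega(bn)+\Omega(ac)$ crossings, leaving only $O((k/n)^2)$ good low-crossing edges; the remaining "bad" pairs, which are nearly collinear, are handled by perturbing each collinear family onto $\Theta(k/n)$ clusters of $\Theta(n^2/k)$ vertical lines, and a harmonic sum over the resulting layers is what yields the factor $\log(\min\{k/n,n^2/k\})$. Your $k\le n$ upper bound (a drawing with $O(1)$ edges having at most $n$ crossings) is likewise asserted rather than proved; in the paper it falls out of the same nested-triangle analysis, while the $\Omega(1)$ lower bound via Theorem~\ref{Thurmann} is fine.
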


In Subsection~\ref{sec:lower}, we show that $S_k(\mathcal{D}) = \Omega \left( \left( \frac{k}{n} \right)^2 \log{\left( \frac{k}{n} \right)} \right)$ holds when $n< k\leq n^{\frac{3}{2}}$, and $S_k(\mathcal{D}) = \Omega \left( \left( \frac{k}{n} \right)^2 \log{\left( \frac{n^2}{k} \right)} \right)$ holds when $n^{3/2}< k< n^2$. Finally, we show that both of these lower bounds are tight in Subsection~\ref{sec:upper}.

\subsection{Lower bound on \texorpdfstring{$S_k(\mathcal D)$}{TEXT}}\label{sec:lower}

\begin{theorem}\label{thm:lower S_k} 
Let $n\geq 3$ and consider a rectilinear drawing $\mathcal D$ of $K_n$. For  any $k$ with $n< k\leq n^{\frac{3}{2}}$, we have $S_k(\mathcal{D}) = \Omega \left(\frac{k^2}{n^2} \log \left(\frac{k}{n} \right) \right)$. For $n^{3/2}< k< n^2$, we have $S_k(\mathcal{D}) = \Omega \left(\frac{k^2}{n^2} \log \left(\frac{n^2}{k} \right) \right)$.
\end{theorem}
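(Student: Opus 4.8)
The plan is to lower-bound the number of edges of $\mathcal D$ that are crossed at most $k$ times, since this quantity is exactly $S_k(\mathcal D)$. Write $P$ for the vertex set and let $\mathcal L$ be the set of $N=\binom{n}{2}$ lines spanned by the pairs of points of $P$. The mechanism through which the cutting lemma (Theorem~\ref{teo:cutting}) enters is the following elementary remark: if $\Delta$ is a generalized triangle whose interior meets at most $c$ lines of $\mathcal L$, then any edge of $\mathcal D$ with both endpoints in $\Delta$ stays inside $\Delta$ by convexity, and therefore is crossed at most $c$ times; hence a generalized triangle with $c\le k$ containing $m$ points of $P$ contributes $\binom{m}{2}$ edges to $S_k(\mathcal D)$. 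I would also keep in reserve the complementary bound from the Remark after Theorem~\ref{teo:lin e_k}, that an edge with $h$ vertices on one side of its supporting line is crossed at most $h(n-2-h)$ times, which handles edges of small ``halving index.''

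For the range $n^{3/2}<k<n^2$ I would first apply Theorem~\ref{teo:cutting} to $\mathcal L$ with parameter $t\asymp N/k$, obtaining $O\big((n^2/k)^2\big)$ generalized triangles, each meeting $O(k)$ lines of $\mathcal L$. Since the cell sizes $m_\Delta$ sum to $n$ and $x\mapsto\binom{x}{2}$ is convex, this already yields $\sum_\Delta\binom{m_\Delta}{2}=\Omega\big((k/n)^2\big)$, which is the claimed bound up to the logarithmic factor. To recover the logarithm I would replace the single cutting by a hierarchical one $\Xi_0\succeq\Xi_1\succeq\cdots$ of $\mathcal L$, in which $\Xi_{j+1}$ refines $\Xi_j$, the cells of $\Xi_j$ meet $O(N/\rho^{\,j})$ lines for a fixed refinement ratio $\rho$, and the points of $P$ stay suitably spread among the cells of each level. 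For vertices $p,q$ let $j(p,q)$ be the deepest level at which they lie in a common cell; the edge $pq$ is then crossed $O(N/\rho^{\,j(p,q)})$ times, so it is credited to $S_k(\mathcal D)$ as soon as $j(p,q)\ge \log_\rho(N/k)$. The idea is to harvest, level by level starting from $j\asymp\log_\rho(N/k)$, the pairs that first land in a cell meeting at most $k$ lines at that level, and to argue that $\Theta\big(\log(n^2/k)\big)$ of these levels each contribute a genuinely new batch of $\Omega\big((k/n)^2\big)$ edges.

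For $n<k\le n^{3/2}$ the cutting at scale $t\asymp N/k$ has more than $n$ cells, so a typical cell contains no point of $P$ and the estimate above degenerates; this is why the threshold $k=n^{3/2}$, where the critical cutting has $\Theta(n)$ cells, separates the two regimes. Here I would instead run the hierarchical cutting from the coarse end, refining only those cells that still meet more than $k$ lines and are not already singletons (at every level there are at most $n$ such cells), and harvest, at each cell that has just become ``good,'' the pairs of its points that split at the next level, using the $h(n-2-h)$ bound to seed the count at the coarse scales. Tracking how many levels actually carry $\Omega\big((k/n)^2\big)$ harvested edges should produce the factor $\log(k/n)$.

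The main obstacle, in both regimes, is precisely this logarithmic gain. A single invocation of the cutting lemma buys only $\Omega\big((k/n)^2\big)$, and a careless summation over the scales of a hierarchical cutting produces a geometric (hence merely $O(1)$) series rather than a harmonic one, because the convexity bound on $\sum_\Delta\binom{m_\Delta}{2}$ collapses as the cells shrink. The heart of the argument is therefore to design the decomposition and the bookkeeping — the refinement ratio $\rho$, the stopping rule for refining a cell, and the rule assigning each harvested edge to a unique level — so that $\Theta(\log)$ distinct scales each contribute an independent $\Omega\big((k/n)^2\big)$ edges without double counting, and to verify that the point set really does remain balanced enough across the hierarchy for this to go through. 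Everything else (the reduction to counting few-crossing edges, the convexity estimate, the elementary per-edge crossing bound, and the case split at $n^{3/2}$) is routine.
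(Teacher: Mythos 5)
Your proposal correctly secures the single-scale bound $S_k(\mathcal D)=\Omega\big((k/n)^2\big)$ from one application of the cutting lemma, but the logarithmic factor --- which is the entire content of the theorem beyond that --- is left as a plan, and the plan as stated runs into a concrete obstruction. In a hierarchical cutting the cells of level $j+1$ refine those of level $j$, so any pair of points sharing a cell at a level deeper than the critical one (where cells meet at most $k$ lines) already shares a cell at the critical level. Hence ``harvesting level by level'' only redistributes pairs that are already inside critical-level cells, and the only lower bound you have for that total is the convexity estimate $\Omega\big((k/n)^2\big)$; the deeper levels cannot supply $\Theta(\log)$ genuinely new batches of that size without some further structural input. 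You flag this yourself (``verify that the point set really does remain balanced enough across the hierarchy''), but that verification is precisely the missing step, and it is not available for free because every scale of your hierarchy is a cutting of the same global family of $\binom{n}{2}$ lines.

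The paper obtains the logarithm by a different mechanism: localization into disjoint vertical strips rather than a multi-scale refinement of one global cutting. A sweep line partitions the vertices into $r$ consecutive strips containing $2k/n,\,4k/n,\,\ldots,\,2^{r}k/n$ points, with $r=\big\lfloor\log_2\big(\tfrac{k}{C_1n}\big)\big\rfloor$ when $k\le n^{3/2}$ and $r=\big\lfloor\log_2\big(\tfrac{n^2}{2k}\big)\big\rfloor$ when $k>n^{3/2}$. A strip lemma (Lemma~\ref{teo:block}) shows that if a strip contains $m$ points with $\tfrac{2k}{n}<m\le\min\{\tfrac{1}{C_1}(k/n)^2,\tfrac n2\}$ and at most $m$ points lie to its left, then $\Omega\big((k/n)^2\big)$ edges with at most $k$ crossings lie entirely inside it: only the lines spanned by edges meeting the strip are relevant, and there are between $mn/2$ and $2mn$ of them (the left-side condition gives the upper bound), so a single cutting with parameter $t=|L'|/k$ restricted to these lines has only $O(m)$ cells and Jensen yields $\Omega\big((k/n)^2\big)$ pairs within the strip. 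The geometric growth of the strip sizes makes the ``at most $m$ points to the left'' condition automatic, the strips contribute pairwise disjoint edge sets, and summing over the $\Theta(\log)$ strips gives the theorem in both regimes. So the idea your argument is missing is this reduction of the effective number of lines from $\Theta(n^2)$ to $O(mn)$ via strips of geometrically increasing point count; without it, the multi-scale bookkeeping you describe collapses back to the single $\Omega\big((k/n)^2\big)$ bound.
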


This theorem is a consequence of the following lemma, which provides a lower bound on the number of edges with at most $k$ crossings that are fully contained in a region of the plane bounded by two parallel lines.

\begin{lemma} \label{teo:block}
There exists a constant $C_1$ such that the following holds. Let $k > n \geq 2$ and let $\mathcal D$ be a rectilinear drawing of $K_n$ and take two parallel vertical lines $l_1$ and $l_2$ in the plane which don't go through any vertices of $\mathcal D$ and such that $l_1$ is to the left of $l_2$. Assume that the open region $R$ between $l_1$ and $l_2$ contains $m$ vertices of $\mathcal D$ for some $m$ with $\frac{2k}{n}< m \leq \min \left\{ \frac{1}{C_1} \left( \frac{k}{n} \right)^2, \frac{n}{2} 
\right\} $\footnote{It is possible that no such $m$ exists, in which case the theorem tells us nothing.}, and that at most $m$ vertices of $\mathcal{D}$ lie in the open half-plane to the left of $l_1$. Then, there are $\Omega \left( \left( \frac{k}{n} \right)^2 \right)$ edges which belong to $\bigcup\limits_{i = 0}^{k} E_i(\mathcal{D})$ and have both endpoints in $R$.
\end{lemma}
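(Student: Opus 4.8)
The plan is to \emph{localize} crossings: I will apply the cutting lemma (Theorem~\ref{teo:cutting}) to a well‑chosen family of lines, so that any edge with both endpoints in $R$ that happens to lie inside a single cell of the resulting subdivision is automatically crossed at most $k$ times, and then I will count such edges. Choose coordinates so that $l_1=\{x=0\}$ and $l_2=\{x=w\}$; let $A$ be the set of vertices with $x<0$ (so $|A|\le m$ by hypothesis) and $B$ the set of vertices with $x>w$, so $|A|+|B|=n-m$. The key structural observation is that an edge of $\mathcal D$ disjoint from $R$ cannot cross an edge lying inside $R$, and an edge with both endpoints in $A$ (resp.\ in $B$) stays strictly to the left of $l_1$ (resp.\ to the right of $l_2$); hence every edge that could cross an edge inside $R$ either has at least one endpoint in $R\cap V$ or joins $A$ to $B$. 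Let $\mathcal L$ be the set of lines spanned by the endpoints of such edges. Distinct vertex pairs span distinct lines (general position), so
\[
|\mathcal L|\;\le\;\binom{m}{2}+m(n-m)+|A|\,|B|\;<\;3mn,
\]
using $|A|\le m$, $|B|\le n$ and $m\le n/2$. It is precisely the hypothesis ``at most $m$ vertices lie to the left of $l_1$'' that keeps $|\mathcal L|$ of order $mn$ rather than $\Theta(n^2)$.

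Next I would invoke Theorem~\ref{teo:cutting} for the line set $\mathcal L$ with parameter $t:=|\mathcal L|/k$. This is admissible: $m>2k/n$ and $n-m\ge n/2$ give $|\mathcal L|\ge m(n-m)>k$, so $t>1$, while $k>1$ gives $t<|\mathcal L|$. We obtain a subdivision of the plane into $r\le Ct^{2}$ generalized triangles, the interior of each met by at most $|\mathcal L|/t=k$ lines of $\mathcal L$. A generalized triangle is convex, and so is $R$; after a standard perturbation of the cutting (or by general position) we may assume no vertex of $\mathcal D$ lies on a cell boundary, so the $m$ vertices of $R$ are partitioned among the cells, say with $x_\Delta$ of them in cell $\Delta$ and $\sum_\Delta x_\Delta=m$. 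If $u,v\in R\cap V$ lie in the same cell $\Delta$, then $\overline{uv}\subseteq\Delta$; any edge crossing $\overline{uv}$ meets $R$ (since $\overline{uv}\subseteq R$), hence its spanning line lies in $\mathcal L$ and meets the interior of $\Delta$, so $\overline{uv}$ is crossed at most $k$ times. Thus every such $\overline{uv}$ belongs to $\bigcup_{i=0}^{k}E_i(\mathcal D)$ and has both endpoints in $R$, and there are at least $\sum_\Delta\binom{x_\Delta}{2}$ of them.

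To finish I would bound $\sum_\Delta\binom{x_\Delta}{2}$ from below. Since $\binom{\cdot}{2}$ is convex and $r\binom{m/r}{2}$ is decreasing in $r$, this sum is smallest when there are exactly $Ct^{2}$ cells with $m/(Ct^2)$ vertices each, so
\[
\sum_\Delta\binom{x_\Delta}{2}\;\ge\;Ct^{2}\binom{m/(Ct^{2})}{2}\;\ge\;\frac{m^{2}}{4Ct^{2}}
\qquad\text{provided }\ \frac{m}{Ct^{2}}\ge 2 .
\]
Because $t<3mn/k$, the side condition $m\ge 2Ct^{2}$ is implied by $m\le \tfrac{k^{2}}{18Cn^{2}}$, so I set $C_{1}:=18C$; the hypothesis $m\le\frac{1}{C_1}(k/n)^{2}$ is then exactly what is needed, and
\[
\frac{m^{2}}{4Ct^{2}}\;\ge\;\frac{m^{2}}{4C(3mn/k)^{2}}\;=\;\frac{k^{2}}{36C\,n^{2}}\;=\;\Omega\!\left(\left(\tfrac{k}{n}\right)^{2}\right),
\]
which is the claimed bound.

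The only genuinely delicate point is the first step: correctly pinning down which edges of $\mathcal D$ can cross an edge inside $R$, so that $\mathcal L$ is neither too large (the estimate $|\mathcal L|=O(mn)$, which is where the ``$\le m$ to the left'' hypothesis enters, is what makes the final count come out to $\Omega((k/n)^2)$ rather than something weaker) nor too small (missing a potential crosser would break the ``crossed at most $k$ times'' argument). Everything else is routine: the line‑versus‑segment care in the crossing argument, the boundary/general‑position technicalities of the cutting, the check that $t$ lies in the admissible range, and the arithmetic converting $m\le\frac{1}{C_1}(k/n)^2$ into $m\ge 2Ct^2$.
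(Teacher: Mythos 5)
Your proposal is correct and follows essentially the same route as the paper: you apply the cutting lemma to the lines spanned by the edges meeting the strip $R$ (your bound $|\mathcal L|<3mn$ is the paper's $|L'|\le 2mn$ up to a constant) with the same parameter $t=|\mathcal L|/k$, count same-cell pairs via convexity of the cells and Jensen, and use the hypotheses $m>2k/n$ and $m\le\frac{1}{C_1}(k/n)^2$ exactly where the paper does (to make $t>1$ and to force $m/r\ge 2$). The only differences are cosmetic bookkeeping of the edge set and of the constants.
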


\begin{proof}

Let $L$ denote the set of edges in $\mathcal{D}$ that intersect the region $R$. Note that no edges outside of $L$ can intersect an edge fully contained in $R$. Let $L'$ be the set of lines in $\mathbb{R}^2$ formed by extending the edges in $L$. We claim that the number of edges which are fully contained in $R$ and are intersected by at most $k$ lines of $L'$ is $\Omega \left( \left( \frac{k}{n} \right)^2 \right)$.

In order to prove this, we begin by applying the cutting lemma (Theorem~\ref{teo:cutting}) to the set of lines $L'$ with parameter $t = \frac{|L'|}{k}$. First, we check that $t \in (1, |L'|)$ is satisfied. It is not difficult to realize that $|L'|=|L| \ge mn/2$, whence
\begin{align}
t = \frac{|L'|}{k} \ge \frac{mn}{2k} > 1. \nonumber
\end{align}
Furthermore, $t = \frac{|L'|}{k} <  |L'|$, as desired.

Let $T_1, T_2, \ldots, T_r$ denote the $r$ generalized triangles produced by the cutting lemma. The interior of $T_i$ is intersected by at most $\frac{|L'|}{t} = k$ lines of $L'$. Thus, if $P_i$ is the set of vertices of the drawing that belong to $R\cap T_i$, including the boundary of $T_i$, then any edge connecting two vertices of $P_i$ is intersected by at most $k$ lines of $L'$. Now, by adding over all indices and applying Jensen's Inequality, we arrive at the conclusion that the number of edges with the desired properties is at least
\begin{align}\label{equ:Jensen}
\sum_{i = 1}^{r} \binom{|P_i|}{2} \ge r \binom{\frac{1}{r} \sum\limits_{i = 1}^{r} |P_i|}{2} \ge r \binom{\frac{m}{r}}{2} = \frac{m(\frac{m}{r} - 1)}{2} \geq \frac{m^2}{4r}
\end{align}
where the last inequality follows from the fact that $\frac{m}{r} \ge 2$. To check that this holds, we first note that $|L'|\leq 2mn$. Moreover, the cutting lemma implies the existence of an an absolute constant $C_2$ such that
\begin{align}
r \le C_2t^2 = C_2 \left( \frac{|L'|}{k} \right)^2 \le C_2 \left( \frac{2mn}{k} \right)^2 = 4C_2m^2 \left(\frac{n}{k} \right)^2 \le 4C_2m^2 \left( \frac{1}{C_1m} \right) = \frac{4C_2m}{C_1}, \nonumber
\end{align}
so setting $C_1 \geq 8C_2$ guarantees $r \le \frac{m}{2}$.

Lastly, note that the right hand side of (\ref{equ:Jensen}) satisfies
\begin{align}
 \frac{m^2}{4r} \ge \frac{m^2}{4C_2t^2} = \frac{1}{4C_2} \left( \frac{mk}{|L'|} \right)^2 \ge  \frac{1}{4C_2} \left( \frac{mk}{2mn} \right)^2 = \frac{1}{16C_2} \left( \frac{k}{n} \right)^2,
\end{align}
so there are at least $\Omega \left( \left( \frac{k}{n} \right)^2 \right)$ edges with both endpoints in $R$ and with at most $k$ intersections with the lines in $L'$. Since any such edge must belong to $\bigcup\limits_{i = 0}^{k} E_i(\mathcal{D})$, this concludes the proof of the lemma.
\end{proof}

We are now ready to  prove Theorem~\ref{thm:lower S_k}.

\begin{proof}[Proof of Theorem~\ref{thm:lower S_k}] 

We start by partitioning the vertices of $\mathcal{D}$ into subsets using vertical parallel lines via a sweeping line process that starts on the left of all $n$ vertices and progresses rightwards (we assume that no two vertices lie in the same vertical line). Then, we obtain a lower bound on $S_k(\mathcal{D})$ by applying Lemma $\ref{teo:block}$ to each of these subsets and adding everything up.

If $k\leq n^{3/2}$, we set $r = \left\lfloor \log_{2} \left( \frac{k}{C_1n} \right) \right\rfloor$ and use $r$ vertical parallel lines to form $r$ consecutive disjoint regions containing $\frac{2k}{n}, \frac{4k}{n}, \ldots, \frac{2^{r}k}{n}$ vertices, in that order. First off, note this is possible, since \[\sum_{i=1}^r\frac{2^ik}{n}<2^{r+1}\frac{k}{n}\leq \frac{2k^2}{C_1n^2}\leq \frac{2}{C_1}n\] and we can assume that $C_1>2$. Furthermore, one can directly check that each of these $r$ regions satisfies the properties in the statement of Lemma~\ref{teo:block}, and thus fully contains $\Omega \left( \left( \frac{k}{n} \right)^2 \right)$ edges from $\bigcup\limits_{i = 0}^{k} E_i(\mathcal{D})$. Adding over all regions, we get that
\begin{align}
S_k(\mathcal{D}) \ge r \cdot \Omega \left( \left( \frac{k}{n} \right)^2 \right) \nonumber = \Omega \left( \left( \frac{k}{n} \right)^2 \log \left(\frac{k}{n} \right) \right), \nonumber
\end{align}
as desired.
 
If $k >n^{3/2}$, we set $r = \left\lfloor \log_{2} \left( \frac{n^2}{2k} \right) \right\rfloor$ and once again we use $r$ vertical parallel lines to construct $r$ consecutive disjoint regions containing $\frac{2k}{n}, \frac{4k}{n}, \ldots, \frac{2^{r}k}{n}$ vertices, in that order. This is possible, because \[\sum_{i=1}^r\frac{2^ik}{n}<2^{r+1}\frac{k}{n}\leq \frac{n^2}{k}\cdot\frac{k}{n}=n.\] By Lemma~\ref{teo:block}, within each of these $r$ regions lie $\Omega \left( \left( \frac{k}{n} \right)^2 \right)$ edges from $\bigcup\limits_{i = 0}^{k} E_i(\mathcal{D})$. As before, adding over all regions yields
\begin{align}
S_k(\mathcal{D}) \ge r \cdot \Omega \left( \left( \frac{k}{n} \right)^2 \right) \nonumber = \Omega \left( \left( \frac{k}{n} \right)^2 \log \left(\frac{n^2}{k} \right) \right), \nonumber
\end{align}
which completes the proof.
\end{proof}

\subsection{A construction matching the lower bound} \label{sec:upper}

\begin{theorem}\label{nestedtriangles}
Let $n\geq 3$. For $n<k\leq n^{3/2}$, there exists a rectilinear drawing $\mathcal{D}$ of $K_n$ attaining $S_k(\mathcal{D}) = O \left(\frac{k^2}{n^2} \log \left(\frac{k}{n} \right) \right)$. For $n^{3/2}<k<n^{2}$, there exists a rectilinear drawing $\mathcal{D}$ of $K_n$ attaining $S_k(\mathcal{D}) = O \left(\frac{k^2}{n^2} \log \left(\frac{n^2}{k} \right) \right)$.
\end{theorem}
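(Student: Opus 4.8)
The plan is to produce, for every admissible $k$, a rectilinear drawing $\mathcal D$ of $K_n$ in which all but $O\!\bigl(\tfrac{k^2}{n^2}\log(\,\cdot\,)\bigr)$ of the $\binom n2$ edges are crossed \emph{more} than $k$ times; since $S_k(\mathcal D)$ counts exactly the edges crossed at most $k$ times, this is what we want. The construction should mirror the lower bound of Theorem~\ref{thm:lower S_k}, whose proof slices the plane into $r$ vertical slabs of geometrically growing sizes and charges $\Omega\bigl((k/n)^2\bigr)$ edges to each slab. Accordingly, I would build $\mathcal D$ out of $\Theta(r)$ ``levels'' of vertices — where $r=\Theta\bigl(\log(k/n)\bigr)$ in the first regime $n<k\le n^{3/2}$ and $r=\Theta\bigl(\log(n^2/k)\bigr)$ in the second — each level being a cluster of vertices placed on a very flat convex arc, in the spirit of the gadgets $\mathcal D_{a,b}$, $\mathcal D_{a,b,c}$ and of the circular-arc constructions of Sections~\ref{sec:maxe} and~\ref{sec:maxS} (the ``nested triangles'' picture being: small arcs nested one inside another, or arranged along a common large arc). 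The whole point of nesting is to force every edge joining two vertices of the same level to be crossed by a large number of edges joining vertices of \emph{different} levels, so that at each level only $O\bigl((k/n)^2\bigr)$ internal edges survive with at most $k$ crossings; note this is the exact counterpart of the per-slab count $\Omega\bigl((k/n)^2\bigr)$ in the lower bound, so there is no slack to give away.

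Concretely, I would (i) fix the number of levels to match the lower bound and then choose the level sizes (a geometric progression, together with their mirror images) and the ratio/endpoints so that their total equals $n$ up to lower-order terms — this is exactly what forces the two different counts of levels, and it dictates how any leftover vertices are parked far away so as not to interfere (possibly recursing on them); (ii) position and orient the clusters so that each cross-level edge is ``long'' relative to every level it traverses, hence stabs a constant fraction of that level's internal edges; (iii) prove the key estimate that, inside a level, an internal edge separating $s$ of that level's vertices from the rest picks up roughly $s$ times a quantity comparable to the crossing threshold scaled by the relevant sizes, so that it has more than $k$ crossings unless $s$ is small, whence each level contributes only $O\bigl((k/n)^2\bigr)$ edges to $S_k(\mathcal D)$; and (iv) bound by $O\bigl((k/n)^2\bigr)$ the number of cross-level edges with at most $k$ crossings — these live near the ``ends'' of the arcs and are counted by a case analysis of the kind carried out in Section~\ref{sec:maxe}. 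Summing over the $\Theta(r)$ levels then yields $S_k(\mathcal D)=O\!\bigl(r\cdot(k/n)^2\bigr)$, which is precisely the claimed bound in each regime.

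The main obstacle is step (iii), the crossing count. In convex position the number of crossings of an edge depends only on how the remaining points split across it, but here one must control \emph{which} edges of the other levels actually cross a given internal edge, and this is sensitive to the precise positions and flatness of the clusters. A naive arrangement in which the clusters do not interact is hopelessly wasteful — a single large isolated cluster already produces far too many edges with few crossings (as many as $\Theta(k)$ of them) — so the nesting, and the resulting ``long-edge'' phenomenon, is essential; turning it into a clean per-level bound of $O\bigl((k/n)^2\bigr)$ while simultaneously ruling out that any unanticipated family of edges acquires few crossings is the delicate part, and is also what forces the construction (and the bookkeeping for leftover/mirror vertices) to differ between the two ranges of $k$. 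Everything else — realizability of the configuration, making the arcs sufficiently flat by perturbation, and handling the mirror images — is routine, of the type already carried out in Sections~\ref{sec:maxe} and~\ref{sec:mine}.
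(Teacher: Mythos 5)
Your plan defers exactly the steps that constitute the proof: the per-level crossing estimate in your step (iii), the bound (iv) on cross-level edges, and the placement of the vertices that do not belong to any level are all asserted as goals rather than established, and the last of these is where the scheme breaks as stated. In the regime $n<k\le n^{3/2}$, levels of geometrically growing sizes starting at $\Theta(k/n)$ with $r=\Theta\left(\log\frac{k}{n}\right)$ levels contain only $\Theta\left(\frac{k^2}{n^2}\right)=o(n)$ vertices (e.g.\ $n^{0.2}$ of them when $k=n^{1.1}$), so almost all of the $n$ vertices are ``leftover''. They cannot be parked far away ``so as not to interfere'': they span a complete subgraph on $\Theta(n)$ vertices, and its edges that separate few points of the cluster acquire few crossings unless they are stabbed by many edges meeting the rest of the construction, i.e.\ the original problem recurs on essentially all of $n$ with the same $k$, and the suggested recursion neither terminates usefully nor visibly preserves the budget. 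The alternative of letting the level sizes grow until they exhaust all $n$ vertices forces $r=\Theta\left(\log\frac{n^2}{k}\right)$ levels, which in this regime exceeds $\log\frac{k}{n}$ and overshoots the claimed bound. So the accounting ``$\Theta(\log)$ levels, each contributing $O\left(\left(\frac{k}{n}\right)^2\right)$'', modeled on the slabs of Theorem~\ref{thm:lower S_k}, is not how the tight upper bound can be achieved.

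The paper's construction is instead a single global ``onion'' in which every vertex participates: $m=n/3$ nested concentric triangles $T_1,\dots,T_m$, each the image of the previous one under a homothety of ratio $-\epsilon$ (Figure~\ref{fig9}). For a ``good'' (non-radial) edge with nesting parameters $a,b,c$, at most $k$ crossings forces $b=O\left(\frac{k}{n}\right)$ and $\min\{a,c\}=O\left(\frac{k}{n}\right)$, giving only $O\left(\left(\frac{k}{n}\right)^2\right)$ such edges in total, with no logarithm. The entire difficulty, which your proposal does not anticipate, lies in the ``bad'' edges joining radially aligned vertices $P_{i,r},P_{j,r}$ with $i-j$ even: these nearly collinear families would otherwise produce far too many low-crossing edges. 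The paper kills them by a dedicated perturbation of each collinear family onto $q=\Theta\left(\frac{k}{n}\right)$ clusters of $\Theta\left(\frac{n^2}{k}\right)$ vertical lines, organized into layers, and then counts ``isolated'' edges layer by layer; the logarithmic factor $\log\min\left\{\frac{k}{n},\frac{n^2}{k}\right\}$ arises there from a harmonic sum $\sum_{\ell'}\frac{1}{\ell'}$, not from summing over $\Theta(\log)$ levels. Identifying and controlling this radial family (your step (iii) concerns within-level edges and your step (iv) edges ``near the ends of the arcs'', neither of which is this family) is the crux, so the proposal as written has a genuine gap.
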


\begin{remark}
For $k = O(n)$, the construction presented in Theorem~\ref{nestedtriangles} implies the existence of a drawing $\mathcal D$ with $S_k({\mathcal{D}}) = O(1)$. Furthermore, there are always at least $3$ edges involved in $0$ crossings (see Theorem~\ref{Thurmann}), so the regime $k \leq n$ in Theorem~\ref{teo:minS} is taken care of.
\end{remark}

\begin{proof}

We will assume that $n$ is a multiple of $3$, as it is not hard to extend our construction to the general case. Moreover, because the desired bound for this theorem is obvious when $n$ is bounded by a constant constant and also when $k = \Omega(n^2)$, we assume that $n\geq 9$ and $k = o(n^2)$ for the remainder of this proof.

Write $m =  \frac{n}{3}\geq 3$. Place the vertices of $\mathcal{D}$ in the plane so that they form $m$ concentric equilateral triangles $T_1, T_2, \ldots, T_m$ with center $O$ such that $T_{i+1}$ is the image of $T_i$ under a homothety at $O$ with scale factor $-\epsilon$, where $\epsilon > 0$ is sufficiently small (this will be made more precise later). Label the vertices of $T_i$ by $P_{i, 1}, P_{i, 2}, P_{i, 3}$, so that for $r \in \{1, 2, 3\}$, the points $P_{1, r},\ldots,P_{m,r}$ are all collinear (see Figure~\ref{fig9}). Later on, we will perform an elaborate perturbation on the vertices of $\mathcal{D}$ to achieve general position.

\begin{figure}[H]
\begin{center}
\includegraphics[scale = 0.57]{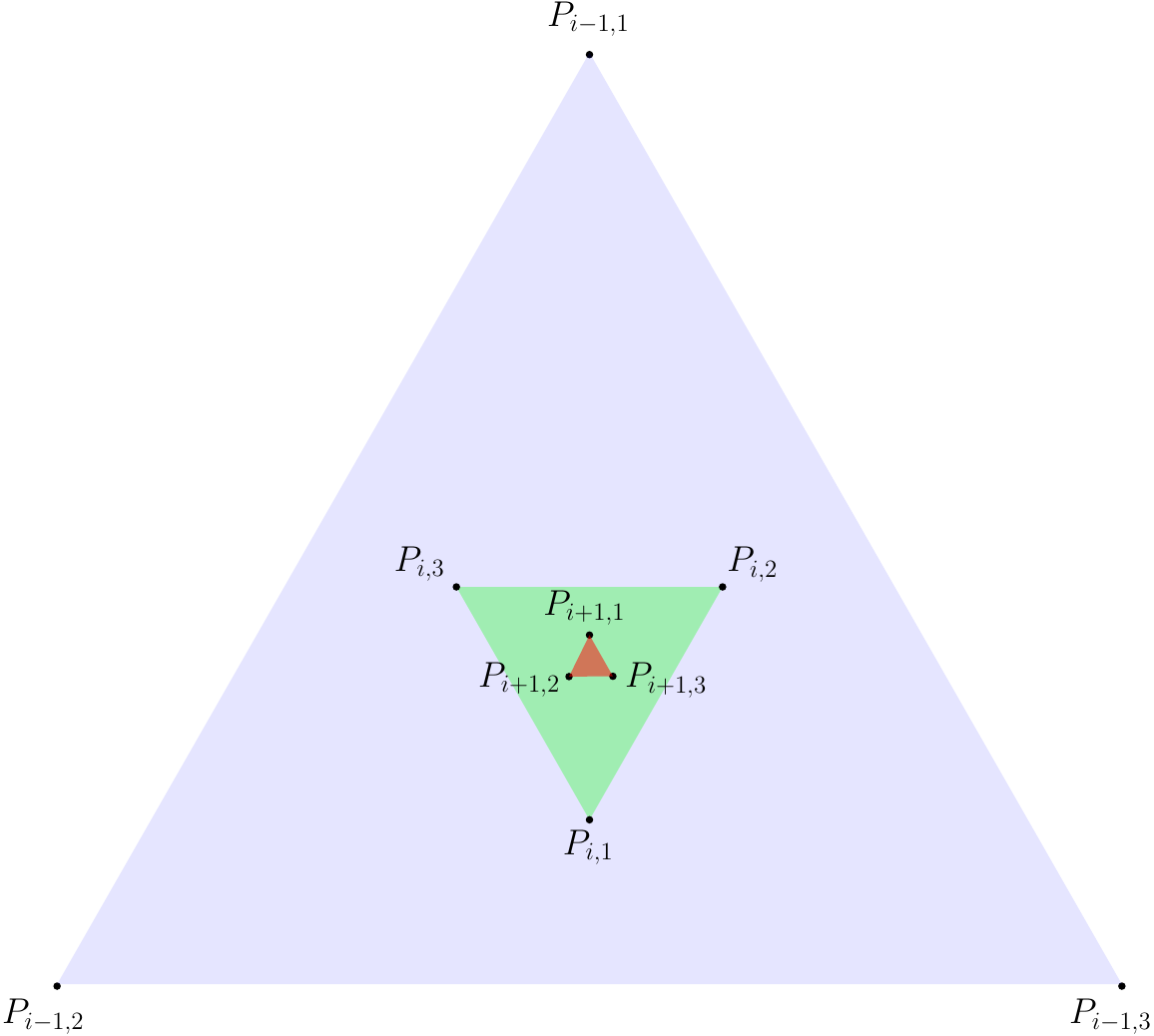}
  \caption{Three consecutive equilateral triangles $T_{i-1}$, $T_i$, and $T_{i+1}$.}
  \label{fig9}
\end{center}
\end{figure}

Our goal is to establish an upper bound on the number of edges with no more than $k$ crossings.  Consider an edge $e$ between distinct points $P_{i, r_1}$ and $P_{j, r_2}$. For the sake of convenience, we write $a=i-1$, $b=j-i-1$ and $c=m-j$. If $i - j$ is odd, then one can check that:
\begin{itemize}
    \item There are at least $\Omega(ab)$ edges that span from one of the $a$ triangles lying outside of $T_i$ to one of the $b$ triangles lying in between $T_i$ and $T_j$ and cross $e$.
    \item There are at least $\Omega(b^2)$ edges whose vertices both belong to one of the $b$ triangles lying in between $T_i$ and $T_j$ and cross $e$.
    \item There are at least $\Omega(bc)$ edges that span from one of the $b$ triangles lying in between $T_i$ and $T_j$ to one of the $c$ triangles lying inside of $T_i$ and cross $e$.
    \item There are at least $\Omega(ac)$ edges that span from one of the $a$ triangles lying outside of $T_i$ and $T_j$ to one of the $c$ triangles lying inside of $T_i$ and cross $e$.
\end{itemize}
If $i - j$ is even, then the first three conditions are still true, but the last one might fail to hold if we also have that $r_1 = r_2$. Henceforth, we say that $e$ is \textit{bad} if and only if $i - j$ is even and $r_1 = r_2$. Otherwise, we say that $e$ is \textit{good}.

Suppose that $e$ is involved in at most $k$ crossings. Since
\begin{align}
\Omega(b(m-2)) = \Omega(b(a+b+c)) = \Omega(ab) + \Omega(b^2) + \Omega(bc) \le k \nonumber,    
\end{align} 
we have that $b = \left( \frac{k}{m} \right)=O \left( \frac{k}{n} \right)$. Since $k=o(n)$, this implies that $b=o(n)$, and so $a+c =\Omega(n)$. Furthermore, if $e$ is a good edge, then $k\geq \Omega(ac)$, which in conjunction with the previous observation implies $\min\{a, c\} = O \left( \frac{k}{n} \right)$. From this, it readily follows that there are at most $O \left( \left( \frac{k}{n} \right)^2 \right)$ good edges with at most $k$ crossings.

Before attempting to obtain any sort of control on the number of bad edges with no more than $k$ crossings, we must specify how the points get perturbed to ensure general position. We describe only the perturbation of the points of the form $P_{x, 1}$ where $x \in [m]$ is odd, as the other cases can be handled analogously. Assume, without loss of generality, that the line that contains all points of the form $P_{x, 1}$ is vertical and that $P_{1,1}$ is the highest point in this line. 

Recalling that $b = O \left( \frac{k}{n} \right)$, we fix an absolute constant $C_3$ such that $b \le C_3 \left( \frac{k}{n} \right)$ holds unconditionally. Furthermore, set $C_3$ large enough so that $q = \left\lceil 4C_3 \left( \frac{k}{n} \right) \right\rceil >2$. Around a small neighborhood of the line that contains all $P_{x, 1}$'s, we construct $q$ disjoint clusters of vertical lines, each containing $\lceil\frac{n^2}{C_3k}\rceil$ lines. The lines inside each cluster are extremely close to each other. 

When perturbing the points, we never alter their position vertically, only horizontally. In fact, all perturbations will consist of pushing a point  either to the right or to the left until it lies on a specific line from the set described above. Let $t=\lceil m/2\rceil$ and, for each $d \in [t]$, write $d = lq + p$ with $p \in [0, q-1]$. We move $P_{2d-1, 1}$ so that it lies on the $(l+1)^{\text{th}}$ vertical line from the left of the $(p+1)^{\text{th}}$ cluster from the left. Because $\left\lceil \frac{t}{q} \right\rceil \le \left\lceil \frac{n^2}{C_3k} \right\rceil$, each of the $t$ vertices will end up on a different vertical line. For every positive integer $\ell$, define the $\ell^{\text{th}}$ \textit{layer} as the set of vertices $P_{2(\ell q-(q-1)) - 1, 1}, P_{2(\ell q - (q-2))-1, 1}, \ldots, \allowbreak P_{2(\ell q)-1, 1}$. 

Setting $i = 2i' - 1$ and $j = 2j' - 1$, consider the set $U$ of vertices in this grid-like configuration that lie below both $P_{i, 1}$ and $P_{j, 1}$ and also in between the vertical lines containing $P_{i, 1}$ and $P_{j, 1}$. The key geometric observation here is that if $\epsilon$ is sufficiently small (or, equivalently, if the vertical coordinates of the points we are perturbing shrink sufficiently fast), then $\overline{P_{i, 1}P_{j, 1}}$ is crossed by every single edge connecting a vertex in $U$ and a vertex that lies above $P_{i, 1}$. See Figure~\ref{fig10}.

\begin{figure}[H]
\begin{center}
\includegraphics[scale = 0.35]{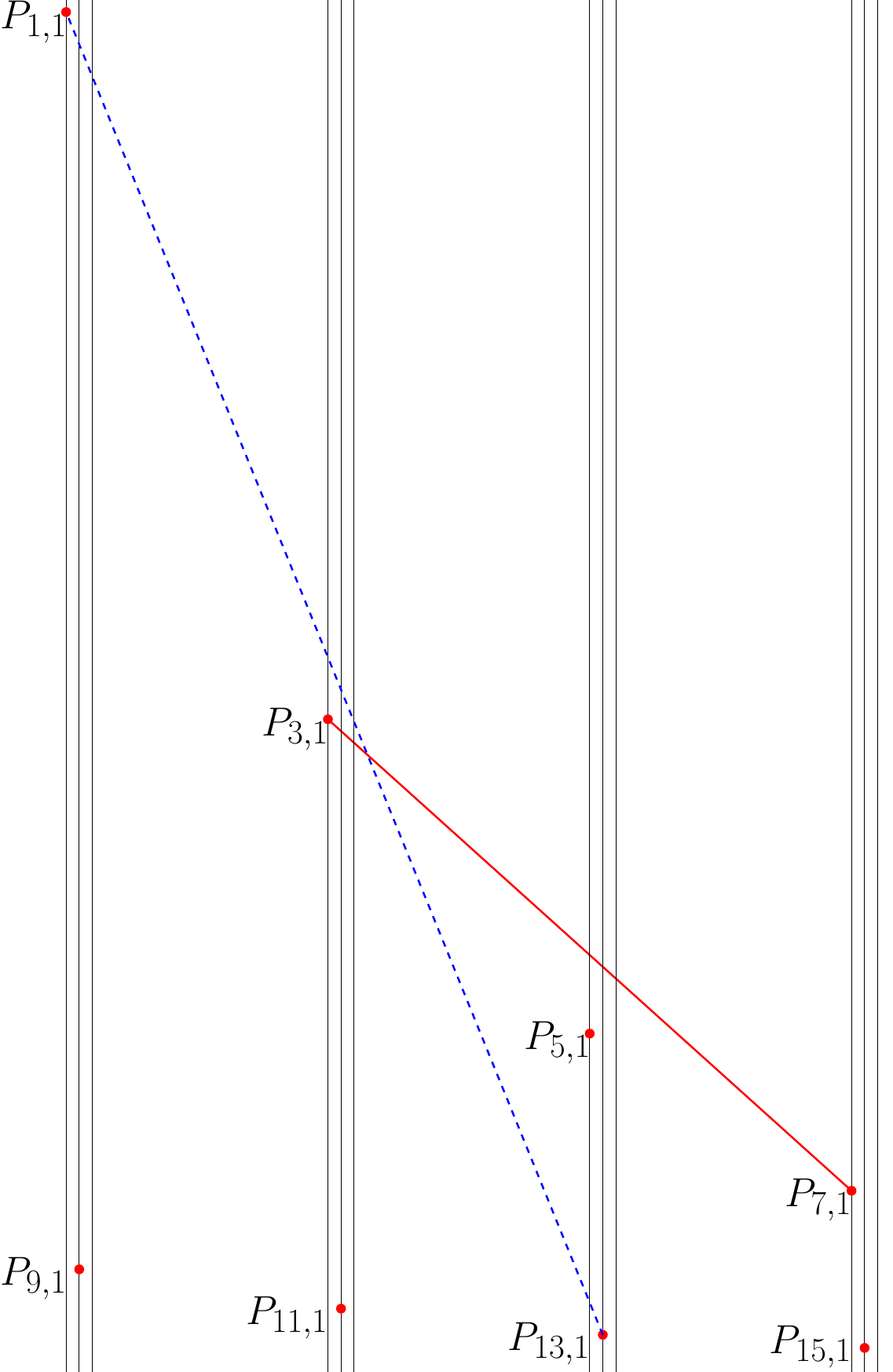}
  \caption{Sample perturbation with $4$ clusters each consisting of $3$ vertical lines. One instance of the key geometric observation for $\overline{P_{3, 1}P_{7, 1}}$ is also shown. If $\epsilon$ were to be made even smaller, eventually the segment $\overline{P_{1, 1}P_{11, 1}}$ would also cross $\overline{P_{3, 1}P_{7, 1}}$.}
  \label{fig10}
\end{center}
\end{figure}

We will say that an edge of the $t$-clique formed by $P_{1, 1}, P_{3, 1}, \ldots, P_{2t-1, 1}$ is \textit{isolated} if it crosses at most $k$ other edges of the $t$-clique. Using the above observation, we shall upper bound the number of isolated edges. Because all edges of the $t$-clique with at most $k$ crossings in $\mathcal D$ must be isolated, this will provide an upper bound on the number of bad edges with at most $k$ crossings. Henceforth, suppose $\overline{P_{i, 1}P_{j, 1}}$ is isolated.

\noindent\textbf{Case 1.}
Suppose that $i' = \Omega(n)$. In particular, this lets us assume $i' - 1 > 0$. Considering the set of edges between a vertex of $U$ and a vertex lying above $P_{i, 1}$ shows that $|U| \le \frac{k}{i'-1} = O \left( \frac{k}{n} \right)$.

Suppose that $P_{j, 1}$ belongs to the $\ell^{\text{th}}$ layer.
Notice that
\begin{align}\label{equ:BoundingVerticalClusters}
j - i - 1 = b \le C_3 \left( \frac{k}{n} \right) \le \frac{q}{4},
\end{align}
and $j'-i'-1=(j-i-2)/2< \frac{q}{8}$. Thus, since $P_{2j'-1, 1}$ belongs to the $\ell^{\text{th}}$ layer, then $P_{2i'-1, 1}$ belongs to the $\ell^{\text{th}}$ or $(\ell-1)^{\text{th}}$ layer. In the former case, there are clearly at least $j' - i' - 1$ clusters of vertical lines in between the clusters that $P_{i, 1}$ and $P_{j, 1}$ belong to, while for the latter case there are at least $q - \left( (j' - i' - 1) + 2 \right) > j'-i'-1$ clusters of vertical lines in between the clusters containing $P_{i, 1}$ and $P_{j, 1}$, where the inequality relies on $j' - i' - 1 <\frac{q}{8}$ and $q > 2$. Thus, we can always find $j'-i'-1$ clusters of vertical lines in between the clusters that $P_{i, 1}$ and $P_{j, 1}$ belong to. Let $\ell'$ denote the number of layers that lie completely below $P_{i, 1}$ and $P_{j, 1}$.

If $\ell'= 0$, then each of $i'$ and $j'$ can take at most $2q$ possible values, and we get that there are no more than $O(q^2)= O \left( \left( \frac{k}{n} \right)^2 \right)$ isolated edges $\overline{P_{i, 1}P_{j, 1}}$ of this form.

Suppose now that $\ell'\ge 1$. If $P_{i, 1}$ belongs to the $\ell^{\text{th}}$ layer, then $P_{2(i'+q) - 1, 1},$ $P_{2(i'+2q) - 1, 1}, \ldots, P_{2(i'+\ell' q) - 1, 1}$ all lie below $P_{i, 1}$ and $P_{j, 1}$ and in between the two vertical lines containing $P_{i, 1}$ and $P_{j, 1}$. If $P_{i, 1}$ belongs to the $(\ell-1)^{\text{th}}$ layer, then $P_{2(j'+q) - 1, 1}, P_{2(j'+2q) - 1, 1}, \ldots, \allowbreak P_{2(j'+\ell' q) - 1, 1}$ all lie below $P_{i, 1}$ and $P_{j, 1}$ and in between the two vertical lines that $P_{i, 1}$ and $P_{j, 1}$ belong to. Together with the $j'-i'-1$ clusters of lines that lie between $P_{i, 1}$ and $P_{j, 1}$, this yields $|U|\geq \Omega(\ell'(j'-i'))$.

Now, recalling that $|U| = O \left( \frac{k}{n} \right)$ gives $j'-i' = O \left( \frac{k}{\ell' n} \right)$, a deduction that yields two useful conclusions. Firstly, given that $P_{j, 1}$ lies in the $\ell^{\text{th}}$ layer, there are $q = O \left( \frac{k}{n} \right)$ possible values for $j'$, so the number of edges $\overline{P_{i, 1}P_{j, 1}}$ that are isolated is upper bounded by 
\begin{align}
O \left( \frac{k}{n} \cdot \frac{k}{\ell' n} \right) = O \left( \frac{1}{\ell'} \left( \frac{k}{n} \right)^2 \right). \nonumber
\end{align}
Secondly, it implies that
\begin{align}
\ell' = O \left( \frac{k}{n(j'-i')} \right) \le O \left( \frac{k}{n} \right), \nonumber
\end{align}
so we need only consider those cases where $\ell' \le O \left( \frac{k}{n} \right)$ or, equivalently $\ell\ge \left\lfloor \frac{t}{q} \right\rfloor - O \left( \frac{k}{n} \right)$.
 
On the other hand, the total number of layers is
\begin{align}
\left\lceil \frac{t}{q} \right\rceil \le \frac{t}{\left\lceil 4C_3\left( \frac{k}{n} \right) \right\rceil} = O \left( \frac{n^2}{k} \right). \nonumber
\end{align}
Thus, the number of isolated edges $\overline{P_{i, 1}P_{j, 1}}$ with $\ell'\ge 1$ is at most
\begin{align}
\sum_{\ell'=1}^{O\left(\min \left\{\frac{k}{n} , \frac{n^2}{k} \right\}\right)}  O \left( \frac{1}{\ell'} \left( \frac{k}{n} \right)^2 \right) 
&= O\left(\min \left\{ \left( \frac{k}{n} \right)^2 \log \left(\frac{k}{n} \right),  \left( \frac{k}{n} \right)^2 \log \left(\frac{n^2}{k} \right)  \right\}\right),
\nonumber
\end{align}
where the equality follows from evaluating a harmonic sum.

Overall, the number of isolated edges for this case is upper bounded by
\begin{align}
O \left( \left( \frac{k}{n} \right)^2 \right) +O\left(\min \left\{ \left( \frac{k}{n} \right)^2 \log \left(\frac{k}{n} \right),  \left( \frac{k}{n} \right)^2 \log \left(\frac{n^2}{k} \right)  \right\}\right) \\ = O\left(\min \left\{ \left( \frac{k}{n} \right)^2 \log \left(\frac{k}{n} \right),  \left( \frac{k}{n} \right)^2 \log \left(\frac{n^2}{k} \right)  \right\}\right).
\nonumber
\end{align}

\noindent\textbf{Case 2.} If $i' = o(n)$, the fact that $b = o(n)$ implies $c = \Omega(n)$. From $b=O\left(\frac{k}{n}\right)$, we also get that there are at most $O\left(\frac{k}{n}\right)$ isolated edges $\overline{P_{i, 1}P_{j, 1}}$ with $i'=1$, so we can assume that $i'>1$. As in Case $1$, we must have that $|U|\leq\frac{k}{i'-1}$. Also, if we let $\ell'$ stand once again for the number of layers that lie completely below $P_{1,j}$, then $|U|\geq \Omega(\ell'(j'-i'))$. Since $c = \Omega(n)$, we also know know that $\ell'\geq \Omega(\frac{n}{q})=\Omega(\frac{n^2}{k})$. Putting everything together, we arrive at \[\Omega\left(\frac{n^2}{k}(j'-i')\right)\leq \Omega\left(\ell'(j'-i')\right)\leq |U|\leq\frac{k}{i'-1}.\] Thus, there exists an absolute constant $C_4$ with \[j'-i'\leq C_4\left(\frac{k^2}{n^2(i'-1)}\right).\] In particular, this also tells us that if $i'-1>C_4\frac{k^2}{n^2}$, then there is no isolated edge involving $P_{1,i}$. Clearly, it is also true that $j'-i'<n$. 

All in all, we can conclude that the number of isolated edges with $i'>1$ is upper bounded by \[\sum_{i'=2}^{\min\left\{\left\lfloor C_4\frac{k^2}{n^2}\right\rfloor+1,n\right\}}\min\left\{C_4\left(\frac{k^2}{n^2(i'-1)}\right),n\right\}.\] A straightforward computation reveals that this sum evaluates to \[O\left(\min \left\{ \left( \frac{k}{n} \right)^2 \log \left(\frac{k}{n} \right),  \left( \frac{k}{n} \right)^2 \log \left(\frac{n^2}{k} \right)  \right\}\right),\] which concludes the proof of the Theorem.
\end{proof}

\section{Further research and connection to \texorpdfstring{$k$}{TEXT}-sets}\label{sec:final}

\subsection{Regarding \texorpdfstring{$\overline\max\ e_k(K_n)$}{TEXT}}

We have seen that, for all $n, 1\leq k\leq\left(\frac{n-2}{2}\right)^2$, \[\Omega(n)\leq \overline\max\ e_k(K_n)\leq O(n\sqrt{k}).\]

This is, unfortunately, all that we can say at the moment about the growth of $\overline\max\ e_k(K_n)$. We strongly believe that the upper bound can be improved on, and it wouldn't surprise us if it turns out that the lower bound is close to the truth.

\begin{conjecture}\label{conj:e_k}
    For every $n\geq 1$ and $k\geq 1$, $\overline\max\ e_k(K_n)= o(n\sqrt{k})$.\footnote{We are being somewhat imprecise here. What we actually mean is that for every $\epsilon>0$ there exists a constant $N$ such that if $n$ and $k$ are larger than $N$ then $\overline\max\ e_k(K_n)/(n\sqrt{k})<\epsilon$.}
\end{conjecture}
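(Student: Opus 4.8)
Since this is still a conjecture, what follows is only a plan of attack. We may assume $k\le\big(\tfrac{n-2}{2}\big)^2$, as otherwise $e_k(\mathcal D)=0$ for every rectilinear drawing $\mathcal D$ of $K_n$. The natural approach is through the connection with $k$-sets hinted at by the title of this section. If $\overline{P_iP_j}$ is an edge of $\mathcal D$ with $p$ vertices strictly on one side of $\overleftrightarrow{P_iP_j}$ and $q=n-2-p$ on the other (say $p\le q$), then only the $pq$ pairs of vertices with one point on each side can possibly cross $\overline{P_iP_j}$; hence an edge with exactly $k$ crossings must have $k\le pq\le pn$, i.e.\ its \emph{minority side} contains at least $\lceil k/n\rceil$ vertices. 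Partitioning the edges of $\mathcal D$ according to the size $p$ of their minority side yields
\[
e_k(\mathcal D)\ \le\ \sum_{p=\lceil k/n\rceil}^{\lfloor (n-2)/2\rfloor} h_p(\mathcal D),
\]
where $h_p(\mathcal D)$ is the number of edges of $\mathcal D$ whose minority side has exactly $p$ vertices, which (up to a factor accounting for orientation) is the number of $p$-edges of the underlying point set.

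The first step would be to feed in the known upper bounds for $k$-sets and $k$-edges, namely $h_p(\mathcal D)=O\big(n\,(p+1)^{1/3}\big)$. When $k$ is extremely close to its maximum value $\big(\tfrac{n-2}{2}\big)^2$, the admissible values of $p$ fill only a short interval around $\tfrac n2$, and summing $O(n^{4/3})$ over them already gives $e_k(\mathcal D)=o(n\sqrt k)$; in particular this settles a thin window at the very top of the range. One can squeeze a little more out of the range $k>n$ by also subtracting the lower bound $S_{k-1}(\mathcal D)\ge \overline{\min}\ S_{k-1}(K_n)$ supplied by Theorem~\ref{teo:minS} from the bound $S_k(\mathcal D)=O(n\sqrt k)$ of Theorem~\ref{teo:nsqrtk}, since $e_k(\mathcal D)=S_k(\mathcal D)-S_{k-1}(\mathcal D)$; with the constants currently available, though, this only helps in another narrow window.

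The heart of the problem --- and the main obstacle --- is the intermediate regime, say $n^{1+\varepsilon}\le k\le n^{2-\varepsilon}$. There, many types $p$ contribute to the sum above, and bounding each term by $h_p=O(np^{1/3})$ is far too lossy: within a single type the crossing counts need not spread out at all (in convex position every edge of a given type has the \emph{same} crossing count). What one really wants is a bound of the shape $\overline{\max}\ e_k(K_n)=O(n\,k^{1/3})$, or even just $n^{1+o(1)}$ times something that is $o(\sqrt k)$; any such estimate would imply the conjecture and would come close to matching the $\Omega(n)$ lower bound of Theorem~\ref{teo:lin e_k}. Obtaining it seems to require a genuinely geometric refinement of the $k$-set machinery that separates ``exactly $k$ crossings'' from ``at most $k$ crossings'': every purely crossing-number-based argument --- the crossing lemma, Theorem~\ref{teo:k-planar}, or applying the crossing lemma to the subgraph of edges with exactly $k$ crossings --- is self-defeating, because graphs attaining $\Theta(n\sqrt k)$ edges in those bounds exist and nothing there prevents all of their edges from having crossing number exactly $k$. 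A plausible route is to fix a type $p$, start from an arbitrary drawing, and perform a continuous sweep (moving a single vertex, or rotating a direction as in the theory of circular/allowable sequences), tracking how the crossing counts of the $p$-edges evolve under the resulting adjacent transpositions and arguing that the level set $\{e:\mathrm{cr}(e)=k\}$ cannot be large for too many types at once. Turning this into a per-type bound that, summed over $p\in[\lceil k/n\rceil,\lfloor(n-2)/2\rfloor]$, stays $o(n\sqrt k)$ is where the real difficulty lies, and we leave it open.
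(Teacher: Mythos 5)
The statement you are addressing is Conjecture~\ref{conj:e_k}, which the paper leaves open: there is no proof of it anywhere in the text, so there is nothing to match your argument against. Your proposal is honest about this and is itself not a proof but a plan of attack. The partial observations you make are correct and, in fact, closely mirror the paper's own discussion in Section~\ref{sec:final}: an edge with exactly $k$ crossings can only be crossed by pairs separated by its supporting line, so its minority side has at least $\lceil k/n\rceil$ points, and grouping edges by the size $p$ of the minority side ties the problem to $p$-edges and Dey's $O(n p^{1/3})$ bound; likewise your remark that any purely crossing-count argument (crossing lemma, Theorem~\ref{teo:k-planar}) is doomed because $\overline\max\ S_k(K_n)=\Theta(n\sqrt k)$ is attained is exactly the obstruction the authors point out. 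Your ``thin window'' computation is also right: for $k\geq\left(\frac{n-2}{2}\right)^2-t$ the admissible types number $O(\sqrt t)$, so Dey gives $e_k=O(\sqrt t\, n^{4/3})=o(n^2)$ provided $t=o(n^{4/3})$, and the subtraction $e_k(\mathcal D)=S_k(\mathcal D)-S_{k-1}(\mathcal D)$ combined with Theorem~\ref{teo:minS} can only bite when $k=\Theta(n^2)$, since $\left(\frac{k}{n}\right)^2$ is $o(n\sqrt k)$ below that scale.

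The genuine gap is the one you yourself flag, and it is not a technicality but the entire content of the conjecture. Summing $h_p=O(np^{1/3})$ over all admissible types $p\in\left[\lceil k/n\rceil,\lfloor(n-2)/2\rfloor\right]$ gives $O(n^{7/3})$, far above $n\sqrt k\leq n^2$, so outside the thin windows at the extremes (in particular for fixed $k$, for $k=\Theta(n)$, and for $k=\Theta(n^{3/2})$) your scheme recovers nothing beyond the trivial $O(n\sqrt k)$ already supplied by Proposition~\ref{teo:nsqrtk}. The proposed remedy --- a continuous-motion or allowable-sequence sweep controlling how the crossing counts within a fixed type evolve, so as to bound the level set of edges with exactly $k$ crossings --- is only a heuristic at this point: you give no mechanism forcing the counts within a type to spread (convex position shows they need not), nor a per-type bound whose sum stays $o(n\sqrt k)$. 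So the proposal should be read as a reasonable research programme consistent with the paper's own commentary, not as a proof; the conjecture remains open under both your account and the paper's.
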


A proof of this conjecture, or a construction beating the $\Omega(n)$ lower bound, would be interesting even in special cases such as when $k=\Theta(n^2)$ or when $k$ is fixed and $n$ goes to infinity. As we saw in Section~\ref{sec:maxS}, there are rectilinear drawings attaining $\overline\max\ S_k(K_n)=\Theta(n\sqrt{k})$, so any argument that improves the upper bound for $e_k$ must be able to distinguish between edges with $k$ crossings and edges with less than $k$ crossings.

Along similar lines, one could ask whether $\Omega(n^2)$ edges might be concentrated on a small interval of the crossing profile. For example, is it true that for every $n$ there exists a rectilinear drawing $\mathcal D$ and two positive integers $m$ and $\ell$ such that $\ell=o(n^2)$ and $\sum_{k=m}^{m+\ell}S_k(\mathcal D)=\Omega(n^2)$? As of right now, all that we know is that an interval of length $\ell$ can encompass up to $\Omega(n\sqrt{\ell})$ edges, as shown by the drawing where the vertices are in convex position.

\subsection{On \texorpdfstring{$k$}{TEXT}-edges, \texorpdfstring{$e_k$}{TEXT}, and another variant of our problem}

Below, we draw an analogy between the problem of bounding $\overline{\max}\ e_k(K_n)$ and a very well known question of Lovász~\cite{lovasz1971number} regarding the number of $k$-edges induced by a set of points.

Give a set $P$ of $n$ points in general position on the plane, a $k$\textit{-edge} is a pair of points $A,B\in P$ such that on one of the two open half-planes defined by the line $\overrightarrow{AB}$ contains exactly $k$ points of $P$. Similarly, a $\leq k$\textit{-edge} consists of a pair of points in $P$ for which one of these two open half-planes contains at most $k$ elements of $P$. The maximum number of $k$-edges and $\leq k$-edges that a set of $n$ points can have has been widely studied. In a landmark paper, Dey~\cite{dey1998improved} showed that the maximum number of $k$-edges is at most $O(nk^{1/3})$, and improving this result is considered a major open problem in discrete geometry (the best known constructions achieve $n2^{\Omega(\sqrt {\log k})}$ ~\cite{nivasch2008improved}). On the other hand, the maximum possible number of $\leq k$ edges is known to be of order $\Theta(nk)$~\cite{alon1986number}, and the least possible number of $k$ edges has also been studied~\cite{k-setslovasz}.

For any set of points $P$ as above, consider the rectilinear drawing $\mathcal D_P$ of $K_n$ where the nodes correspond to the elements of $P$. In this setting, a $k$-edge can also be defined as an edge which, after being extended to a line, crosses exactly $k$ other edges in the drawing. In this paper, we have dealt with the number of edges which cross exactly (or at most) $k$ other edges. One could also study the number of edges whose interior intersects the extensions of exactly (or at most) $k$ other edges in the drawing.

Given a rectilinear drawing $\mathcal D$ of a graph $G$, let $E_k'(\mathcal D)$ denote the set of edges in the drawing whose interior is intersected by exactly $k$ lines that arise from extending an edge of the drawing infinitely in both directions, and write $e_k'(\mathcal D)=|E_k'(\mathcal D)|$. We define $S_k'(\mathcal D)$, $\overline{\max{}}e_k'(G)$, $\overline{\min{}}e_k'(G)$, $\overline{\max{}}S_k'(G)$, $\overline{\max{}}S_k'(G)$ just as we did for the unprimed versions.

\begin{theorem}\label{teo:S'}
    Let $n\geq 1$ and $k\leq\binom{n}{2}$. Then, $\overline\max\ S_k'(K_n)=\Theta(n+k)$ and there exist two absolute positive constants $c$ and $C$ such that \[\overline\min\ S_k'(K_n)=\begin{cases*}
    0 & if $ k\leq cn^{3/2}$,\\
    \Theta\left( \left( \frac{k}{n} \right)^2  \right) & if $Cn^{3/2}< k$.
\end{cases*} \]

\end{theorem}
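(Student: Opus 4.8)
The plan is to treat the four quantities separately, reusing the geometric machinery developed earlier in the paper. Throughout, observe the basic dictionary: an edge $e$ lies in $E_k'(\mathcal D)$ if and only if the line through $e$ is crossed by exactly $k$ other edges, equivalently (in the language of the last section) the pair of endpoints of $e$ is a $j$-edge of the underlying point set where $j(n-2-j)+(\text{something})$... more precisely, if the line $\overleftrightarrow{AB}$ has $p$ points on one side and $q=n-2-p$ on the other, then it separates $pq$ pairs, so it is crossed by exactly $pq$ edges. Hence $E_k'(\mathcal D)$ is essentially the set of $p$-edges with $p(n-2-p)=k$, and $S_k'(\mathcal D)$ counts edges whose supporting line separates at most $k$ pairs, i.e. the $\le p$-edges for the largest $p$ with $p(n-2-p)\le k$, which means $p=\Theta(\min\{k/n,\sqrt n\})$ roughly. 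I would first make this correspondence precise, since it immediately converts all four bounds into statements about $k$-edges and $\le k$-edges.

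For $\overline\max\ S_k'(K_n)=\Theta(n+k)$: the lower bound $\Omega(n)$ is trivial (there are always $\Omega(n)$ edges on the convex hull whose lines have few points on one side — in fact the hull edges have $0$), and the lower bound $\Omega(k)$ comes from the convex-position drawing, where the number of $\le p$-edges is $\Theta(np)=\Theta(\sqrt{nk})$... wait, this only gives $\Omega(\sqrt{nk})$, not $\Omega(k)$; so for the $\Omega(k)$ part when $k$ is large I would instead use a drawing with a cluster structure (e.g. a drawing attaining many edges with few crossings as in Section~\ref{sec:maxS}) — actually the cleanest route is: take $\Theta(\sqrt k)$ points in convex position and the remaining $n-\Theta(\sqrt k)$ points far away in a tight cluster; then the $\binom{\Theta(\sqrt k)}{2}=\Theta(k)$ edges inside the convex polygon each have at most $\Theta(\sqrt k\cdot\sqrt k)=\Theta(k)$ edges crossing their extensions (one needs $k\ge\Omega(n)$ here for the cluster not to ruin this, which is fine since for $k=O(n)$ the bound $\Theta(n+k)=\Theta(n)$ is already given). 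For the upper bound $S_k'(\mathcal D)=O(n+k)$, I would invoke the $\Theta(nk)$ bound on the number of $\le k$-edges of Alon--Győri / Peck~\cite{alon1986number}: translating, $S_k'(\mathcal D)$ is at most the number of $\le p$-edges with $p=O(\min\{k/n,\sqrt n\})$, which is $O(n\cdot p)=O(n\cdot k/n)=O(k)$ when $k\le n^2$ and $p\le\sqrt n$, plus $O(n)$ for small $p$; this gives $O(n+k)$. The main subtlety is getting the regime boundaries of $p$ right, but it is routine.

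For $\overline\min\ S_k'(K_n)$: the dichotomy at $k\asymp n^{3/2}$ is exactly the threshold where a convex-position-like construction can make all relevant $\le p$-edges vanish. When $k\le cn^{3/2}$, I would exhibit a drawing with $S_k'(\mathcal D)=0$, i.e. a drawing in which \emph{every} edge's supporting line is crossed by more than $k$ other edges; a natural candidate is a drawing where the points are spread so that every line through two of them is very balanced — for instance a small perturbation of points on a circle or on a convex curve of very high "uniformity", or better, the nested-triangles / grid-like construction from Theorem~\ref{nestedtriangles}, adapted so that every line through two vertices has $\Omega(\sqrt k)$ points on each side; the constant $c$ is chosen so this is possible. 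When $k\ge Cn^{3/2}$, the lower bound $S_k'(\mathcal D)=\Omega((k/n)^2)$ should follow from the cutting-lemma argument of Lemma~\ref{teo:block}, applied not to actual crossings but to the $\le k$-edge count: in any strip containing $m\approx k/n$ consecutive vertices, the cutting lemma forces $\Omega((k/n)^2)$ pairs inside the strip whose lines meet few of the lines through relevant edges, hence $\Omega((k/n)^2)$ edges in $E_{\le k}'$ — this is the same calculation as before and I expect it to go through nearly verbatim; the matching upper bound $O((k/n)^2)$ comes from a single-strip construction (points in convex position, or the nested-triangles construction) in which only the $O((k/n)^2)$ "innermost" edges have supporting lines crossed few times. \textbf{The main obstacle} I anticipate is the lower bound in the dense regime and, dually, verifying that the $S_k'=0$ construction really achieves $0$ for all $k$ up to $cn^{3/2}$: one must rule out \emph{every} edge, including short edges near the convex hull, so the construction has to simultaneously balance all $\binom n2$ lines, and proving that no line is too unbalanced requires a careful choice of point positions (and is the place where the $n^{3/2}$ threshold genuinely enters, mirroring the $n^{3/2}$ threshold in Theorem~\ref{teo:minS}).
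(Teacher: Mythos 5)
Your proposal is built on a misreading of the definition, and this undermines all four bounds. The paper defines $E_k'(\mathcal D)$ as the set of edges whose \emph{interior is intersected by exactly $k$ of the lines} obtained by extending the other edges; you instead take $E_k'$ to be the set of edges whose \emph{supporting line crosses exactly $k$ other edges}, which is the $k$-edge notion discussed in the paragraph before the definition as a contrast, not as the object being studied. These two quantities are genuinely different (they happen to agree in convex position, but not in general), and your "dictionary" identifying $S_k'$ with counts of $\le p$-edges is therefore false. A quick sanity check exposes the problem: under your reading, $S_k'(\mathcal D)\ge 3$ for every drawing and every $k\ge 0$, since the supporting line of a convex-hull edge crosses no edges at all; so the regime $\overline{\min}\ S_k'(K_n)=0$ for $k\le cn^{3/2}$ — which you correctly identify as the hardest part and try to achieve by "balancing all $\binom n2$ lines" — is impossible to prove as you set it up. Under the correct definition the paper achieves $S_k'(\mathcal D)=0$ with a perturbed $\sqrt n\times\sqrt n$ grid, using the fact that an edge of Euclidean length $d$ is crossed by $\Theta(dn^{3/2})$ of the extension lines; the same construction gives the matching $O\bigl((k/n)^2\bigr)$ upper bound when $k>Cn^{3/2}$ (only edges of length $O(k/n^{3/2})$ qualify), whereas your suggested convex-position construction gives $\Theta(n+k)$, far too large in that range.

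Even where your sketch points at the right tools, the concrete steps do not go through for the actual quantity. For $\overline{\max}\ S_k'(K_n)\le O(n+k)$, the Alon--Győri bound on $\le k$-edges bounds the wrong thing; the paper instead proves that every vertex is incident to at most $O((n+k)/n)$ edges of $\bigcup_{i\le k}E_i'(\mathcal D)$, via the elementary observation that for neighbors $B_i,B_j$ on the same side of $\overleftrightarrow{QA}$ one of the lines $\overleftrightarrow{QB_i},\overleftrightarrow{QB_j}$ stabs the other segment, followed by Jensen; and the lower bound $\Omega(n+k)$ needs nothing beyond $n$ points in convex position (your detour through a cluster plus a $\Theta(\sqrt k)$-gon is both unnecessary and problematic, since the pencil of lines from the far cluster to the polygon vertices crosses interior polygon edges about $\Theta(n\sqrt k)\gg k$ times unless $k=\Omega(n^2)$). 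For the lower bound $\Omega\bigl((k/n)^2\bigr)$ when $k>Cn^{3/2}$, your strip-based adaptation of Lemma~\ref{teo:block} fails: in the primed problem one cannot restrict attention to lines spanned inside or near a strip, because a line through two points far to the right still extends back across the strip, so $|L'|=\Theta(n^2)$ regardless of $m$, and with $m\approx k/n$ the cutting-lemma cell count is far too large for the Jensen step. The paper makes exactly this remark and instead applies the cutting lemma once, to the whole point set; the hypothesis $\frac{2k}{n}<n<\frac{1}{C_1}\bigl(\frac{k}{n}\bigr)^2$ of Lemma~\ref{teo:block} is then precisely where the threshold $k>Cn^{3/2}$ enters — not, as you suggest, through a balancing property of some construction.
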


\begin{proof} We begin by addressing the growth rate of $\overline\max\ S_k'(K_n)$.

    Any set of $n$ points in convex position induces a drawing showcasing that $\overline\max\ S_k'(K_n)\geq\Omega(n+k)$. To show that $S_k'$ cannot be larger than this, it suffices to prove that in a rectilinear drawing of $K_n$ every vertex is incident to no more than $O((n+k)/n)$ edges of $E_k'(\mathcal D)$. We require the following geometric observation (see Figure~\ref{fig11}).

\begin{observation}
    Let $X$, $Y$ and $Z$ be three points on the plane, not all in the same line. Then, for any other point $W$ such that $Y$ and $Z$ lie on the same side of the line $\overleftrightarrow{WX}$, either $\overleftrightarrow{WY}$ intersects $\overline{XZ}$ or $\overleftrightarrow{WZ}$ intersects $\overline{XY}$.
\end{observation}

\begin{figure}[H]
\begin{center}
\includegraphics[scale = 0.5]{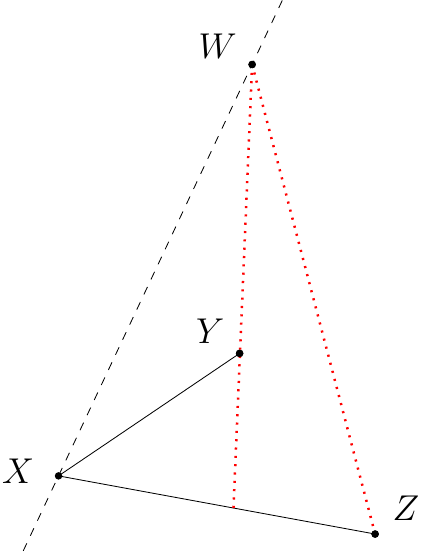}
  \caption{One of the dotted red lines must cross the interior of one of the two solid segments.}
  \label{fig11}
\end{center}
\end{figure}

Let $A$ be a point representing a vertex of $K_n$ in a rectilinear drawing $\mathcal D$. Now, let $d(A)$ denote the number of edges which are incident to $A$ and are intersected by at most $k$ lines spanned by the edges of the drawing, and consider the endpoints (distinct from $A$) $B_1$, $B_2,\ldots,$ $B_{d(A)}$ of these edges. For any other point $Q\neq A$ which also represents a node in the drawing, the observation above implies that if $Q\not\in\{ B_i,B_j\}$ and $B_i, B_j$ lie on the same side of the line $\overleftrightarrow{QA}$, then either $\overleftrightarrow{QB_i}$ intersects the segment $\overline{AB_j}$ or $\overleftrightarrow{QB_j}$ intersects the segment $\overline{AB_i}$. Adding over all valid pairs $B_i$ $B_j$ and using Jensen's inequality, we get that the lines joining $Q$ to each of the $B_i$'s have a total of at least \[2\binom{(d(A)-1)/2}{2}\] intersection points with the interiors of the segments $\overline{AB_1},\ldots,\overline{AB_{d(A)}}$. Adding over all nodes $Q$ other than $A$ and dividing by $2$ to make up for possible double counting, we get that interiors of the segments $\overline{AB_1},\ldots,\overline{AB_{d(A)}}$ are involved in a total of at least \[(n-1)\binom{(d(A)-1)/2}{2}\] intersections with the extensions of the edges of $\mathcal D$. On the other hand, we know that this quantity is at most $kd(A)$, so we get \[(n-1)\binom{(d(A)-1)/2}{2}\leq kd(A).\] From here, it is not hard to deduce that $d(A)\leq O((n+k)/n)$, as required.

Next, we show that $\overline\min\ S_k'(K_n)$ grows as stated above.

For the upper bound, it suffices to provide a construction which attains it. The situation here turns out to be simpler than in the proof of Theorem \ref{teo:minS}, and it suffices to take a rectilinear drawing $\mathcal D$ of $K_n$ induced by perturbing the vertices of a $\sqrt{n}\times\sqrt{n}$ grid\footnote{If $n$ is not a perfect square, we can instead take a $\lceil \sqrt{n}\rceil\times\lceil \sqrt{n}\rceil$ and then delete a subset of the nodes so that $n$ of them remain.}. The key fact about this drawing is that any edge of (Euclidean) length $d$ intersects $\Theta(dn^{3/2})$ of the extensions of the other edges in the drawing. The proof of this fact is rather straightforward but somewhat cumbersome, and will not be included here. From this, we get that:

\begin{itemize}
    \item If $k\leq cn^{2/3}$ and $c$ is small enough, then $E_k'(\mathcal D)=0$.
    
    \item If $Cn^{2/3}\leq k$ and $C$ is large enough, then $S_k'(\mathcal D)$ is at most the number of edges of length $O(k/n^{3/2})$. Each nodes is incident to $O(k^2/n^3)$ edges of this length, so the result follows. 
\end{itemize} 

For the lower bound, assume that $Cn^{3/2}<k$ and consider a drawing of $K_n$. We can repeat the argument from the proof of Lemma~\ref{teo:block}, but with the whole set of points instead of a strip that contains only $m$ of them, to get the desired bound. This works, since $\frac{2k}{n}<n<\frac{1}{C_1} \left( \frac{k}{n} \right)^2$ as long as $C$ is large enough. (We remark that here, unlike in the proof of Theorem~\ref{thm:lower S_k}, there is nothing to be gained from splitting the point set with vertical lines before applying the cutting lemma, since a line induced by an edge between two points to the right of all lines will still intersect the regions to the left of the lines.)
\end{proof}

From the upper bound in Theorem~\ref{teo:S'} for $\overline\max\ S_k'(K_n)$, it follows that $\overline\max\ e_k'(K_n)\leq O(n+k)$. Just as in the original situation, determining the asymptotic behavior of $\overline\max\ e_k'(K_n)$ appears to considerably more challenging than that of $\overline\max\ S_k'(K_n)$. Still, we have some reasons to believe that improving the upper bound $\overline\max\ e_k'(K_n)\leq O(n+k)$ might be a simpler task than proving Conjecture~\ref{conj:e_k}.

\bibliographystyle{plain}
\bibliography{refs}

\newpage
\section{Appendix (Proof of Theorem \ref{teo:lin e_k})}

\begin{proof}[Proof of Theorem \ref{teo:lin e_k}]
Let $k = (m-2)^2 + r$ where $r \in [1, 2m-3]$, so that $m = \left\lceil \sqrt{k} \right\rceil + 1$. First, we produce configurations of $2m-1$ or $2m$ points which induce $\Omega(m)$ edges with $k$ crossings. One such configuration is depicted in Figure~\ref{fig12}. Afterwards, we string together sufficiently many copies of such configurations so that they do not interfere with each other to obtain a drawing with $\Omega(n)$ edges having exactly $k$ crossings. 

\begin{figure}[H]
\begin{center}
\includegraphics[scale = 0.7]{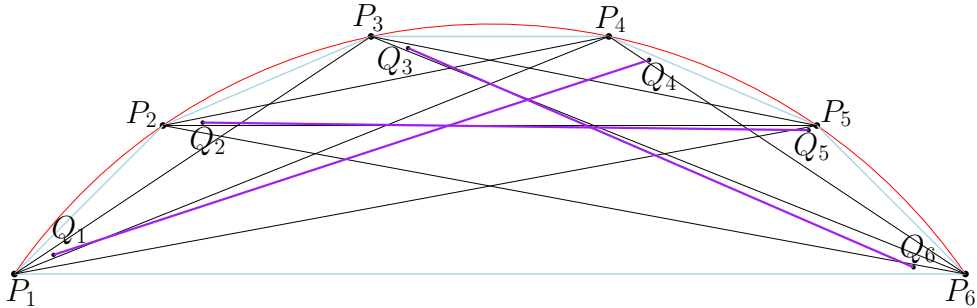}
  \caption{A configuration of $12$ points for $m=6$ and $k = 19$.}
  \label{fig12}
\end{center}
\end{figure}

We assume first that $n$ is even. The fact that $k\leq \left( \frac{n-2}{2} \right)^2$ implies $2m \le n$.

\noindent\textbf{Case 1.} \label{Case1}
If $m = 2j$ (this is the only case that is spelled out in Section~\ref{sec:maxe}), we place $m$ equally spaced vertices $P_1, P_2, \ldots, P_m$ on a small circular arc in clockwise order. We also place $m$ other vertices $Q_1, Q_2, \ldots, Q_m$ such that $Q_i$ is very close to $P_i$ for $i \in [m]$. For the rest of this case, all indices are taken mod $m$. Our goal is to perturb $Q_1, Q_2, \ldots, Q_m$ so that the $j$ \textit{interior diameters}
\begin{align}
\overline{Q_1Q_{j+1}}, \overline{Q_2Q_{j+2}}, \ldots, \overline{Q_jQ_{2j}} \nonumber
\end{align}
are each crossed exactly $k$ times. Notice that $\overline{Q_iQ_{i+j}}$ is crossed by all $(m-2)^2$ edges of the form $\overline{P_aP_b}$, $\overline{Q_aQ_b}$, $\overline{P_aQ_b}$, and $\overline{Q_aP_b}$ for $a \in [i+1, i + j - 1]$ and $b \in [i + j + 1, i + 2j-1]$. Now, we address the edges that are incident to at least one of $P_{i}$ and $P_{i+j}$.

For all $i \in [m]$, we begin by perturbing $Q_1, Q_2, 
\ldots, Q_m$ so that $Q_i$ lies in between $\overrightarrow{P_iP_{i+j}}$ and $\overrightarrow{P_{i-1}P_i}$. This ensures that the $m-1$ rays
\begin{align}\label{equ:rays1'}
\overrightarrow{P_iQ_{i+j-1}}, \overrightarrow{P_iP_{i+j-1}}, \overrightarrow{P_iQ_{i+j-2}}, \overrightarrow{P_iP_{i+j-2}}, \ldots, \overrightarrow{P_iQ_{i+1}}, \overrightarrow{P_iP_{i+1}}, \overrightarrow{P_{i-1}P_i}
\end{align}
appear in counterclockwise order around $P_i$ and that the $m-1$ rays
\begin{align}\label{equ:rays2'}
\overrightarrow{P_{i+j}Q_{i-1}}, \overrightarrow{P_{i+j}P_{i-1}}, \overrightarrow{P_{i+j}Q_{i-2}}, \overrightarrow{P_{i+j}P_{i-2}}, \ldots, \overrightarrow{P_{i+j}Q_{i+j+1}}, \overrightarrow{P_{i+j}P_{i+j+1}}, \overrightarrow{P_{i+j-1}P_{i+j}}
\end{align}
appear in counterclockwise order around $P_{i+j}$. Moreover, because $Q_i$ and $Q_{i+j}$ lie on different sides of $\overline{P_iP_{i+j}}$, it follows that $\overline{P_iP_{i+j}}$ also crosses $\overline{Q_iQ_{i+j}}$. Now, we proceed with a series of more precise secondary perturbations.

If $r \in [1, m-1]$, then we perturb $Q_i$ so that it lies in between $\overrightarrow{P_iP_{i+j}}$ and $\overrightarrow{P_iQ_{i+j-1}}$. Additionally, we perturb $Q_{i+j}$ so that it lies in between the $(r-1)^{\text{th}}$ and $r^{\text{th}}$ rays listed in (\ref{equ:rays2'}), where the $0^{\text{th}}$ ray is defined as $\overrightarrow{P_{i+j}P_i}$. This ensures that the edges incident to exactly one of $P_i$ and $P_{i+j}$ that cross $\overline{Q_iQ_{i+j}}$ are exactly the edges corresponding to the first $r-1$ rays listed in (\ref{equ:rays2'}). Thus, the total number of edges crossing $\overline{Q_iQ_{i+j}}$ is indeed
\begin{align}
(m-2)^2 + 1 + (r-1) =  k. \nonumber
\end{align}
Because these secondary perturbations of $Q_i$ and $Q_{i+j}$ do not alter the crossings of any other interior diameters, we can perturb $Q_i$ and $Q_{i+j}$ in this fashion for all $i \in [j]$ to transform 
\begin{align}
\overline{Q_1Q_{j+1}}, \overline{Q_2Q_{j+2}}, \ldots, \overline{Q_jQ_{2j}} \nonumber
\end{align}
into edges with $k$ crossings. See Figure~\ref{fig12}.

If $r \in [m, 2m-3]$, then we perturb $Q_i$ so that it lies in between the $(r-m+1)^{\text{th}}$ and $(r-m+2)^{\text{th}}$ rays of (\ref{equ:rays1'}). Additionally, we perturb $Q_{i+j}$ so that it lies in between $\overrightarrow{P_{i+j}P_{i+j+1}}$ and $\overrightarrow{P_{i+j-1}P_{i+j}}$. This ensures that edges incident to exactly one of $P_i$ and $P_{i+j}$ that cross $\overline{Q_iQ_{i+j}}$ are the edges corresponding to the first $r-m+1$ rays of (\ref{equ:rays1'}) and the edges corresponding to the first $m-2$ rays of (\ref{equ:rays2'}). Thus, the total number of edges crossing $\overline{Q_iQ_{i+j}}$ is indeed
\begin{align}
(m-2)^2 + 1 + (r-m+1) + (m-2) = (m-2)^2 + r = k. \nonumber
\end{align}
Once again, because these secondary perturbations of $Q_i$ and $Q_{i+j}$ do not alter the crossings of any other interior diameters, we can perturb $Q_i$ and $Q_{i+j}$ in this fashion for all $i \in [j]$ to transform
\begin{align}
\overline{Q_1Q_{j+1}}, \overline{Q_2Q_{j+2}}, \ldots, \overline{Q_jQ_{2j}} \nonumber
\end{align}
into edges with $k$ crossings. See Figure~\ref{fig13}.

\begin{figure}[H]
\begin{center}
\includegraphics[scale = 0.7]{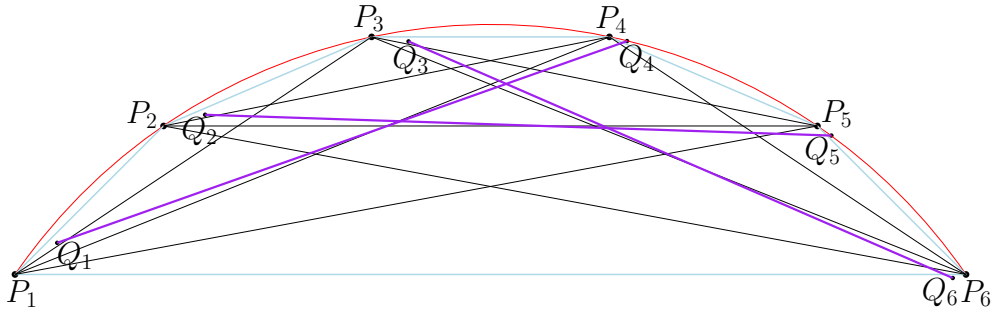}
  \caption{A configuration of $12$ points for $m=6$ and $k = 23$.}
  \label{fig13}
\end{center}
\end{figure}

Thus, we have covered all cases where $n$ is even and $k \in \left( (2j-2)^2, (2j-1)^2 \right]$ for some $j \in \mathbb{Z}^+$.

\noindent\textbf{Case 2.} \label{Case2}
If $m = 2j+1$ and $r \in [2, 2m-4]$, we once again place $m$ equally spaced vertices $P_1, P_2, \ldots, P_m$ on a minor circular arc in clockwise order and construct $m$ vertices $Q_1, Q_2, \ldots, Q_m$ such that $Q_i$ is very close to $P_i$ for $i \in [m]$. For the rest of this case, we also take all indices modulo $m$. As before, our goal is for the $j$ interior diameters 
\begin{align}
\overline{Q_1Q_{j+1}}, \overline{Q_2Q_{j+2}}, \ldots, \overline{Q_{j}Q_{2j}} \nonumber
\end{align}
to each get crossed $k$ times. Observe that $\overline{Q_iQ_{i+j}}$ is crossed by all $(m-1)(m-3)$ edges of the form $\overline{P_aP_b}$, $\overline{Q_aQ_b}$, $\overline{P_aQ_b}$, and $\overline{Q_aP_b}$ for $a \in [i+1, i + j - 1]$ and $b \in [i + j + 1, i + 2j]$. Now, we address edges that are incident to at least one of $P_{i}$ and $P_{i+j}$.
For all $i \in [m]$, we begin by perturbing $Q_1, Q_2, 
\ldots, Q_m$ so that $Q_i$ lies in between $\overrightarrow{P_iP_{i+j}}$ and $\overrightarrow{P_{i-1}P_i}$. This ensures that the $m-2$ rays
\begin{align}\label{equ:rays3}
\overrightarrow{P_iQ_{i+j-1}}, \overrightarrow{P_iP_{i+j-1}}, \overrightarrow{P_iQ_{i+j-2}}, \overrightarrow{P_iP_{i+j-2}}, \ldots, \overrightarrow{P_iQ_{i+1}}, \overrightarrow{P_iP_{i+1}}, \overrightarrow{P_{i-1}P_i}
\end{align}
appear in counterclockwise order with respect to $P_i$ and that the $m$ rays
\begin{align}\label{equ:rays4}
\overrightarrow{P_{i+j}Q_{i-1}}, \overrightarrow{P_{i+j}P_{i-1}}, \overrightarrow{P_{i+j}Q_{i-2}}, \overrightarrow{P_{i+j}P_{i-2}}, \ldots, \overrightarrow{P_{i+j}Q_{i+j+1}}, \overrightarrow{P_{i+j}P_{i+j+1}}, \overrightarrow{P_{i+j-1}P_{i+j}}
\end{align}
appear in counterclockwise order with respect to $P_{i+j}$. Moreover, because $Q_i$ and $Q_{i+j}$ lie on different sides of $\overline{P_iP_{i+j}}$, it follows that $\overline{P_iP_{i+j}}$ also crosses $\overline{Q_iQ_{i+j}}$. Now, we proceed with a series of more precise secondary perturbations.

If $r \in [2, m-1]$, then we perturb $Q_i$ so that it lies in between $\overrightarrow{P_iP_{i+j}}$ and $\overrightarrow{P_iQ_{i+j-1}}$. Additionally, we perturb $Q_{i+j}$\footnote[1]{The condition given in the initial perturbation implies that $Q_{i+j}$ must lie in between $\overrightarrow{P_{i+j}P_{i-1}}$ and $\overrightarrow{P_{i+j-1}P_{i+j}}$, and this stipulation forces $r \ge 2$ to hold for this perturbation of $Q_{i+j}$ to be valid.} so that it lies in between the $r^{\text{th}}$ and $(r+1)^{\text{th}}$ rays of (\ref{equ:rays4}). This ensures that the edges incident to exactly one of $P_i$ and $P_{i+j}$ that cross $\overline{Q_iQ_{i+j}}$ are the edges corresponding to the first $r$ rays of (\ref{equ:rays4}). Thus, the total number of edges crossing $\overline{Q_iQ_{i+j}}$ is indeed
\begin{align}
(m-1)(m-3) + 1 + r = (m-2)^2 + r = k. \nonumber
\end{align}
Because these secondary perturbations of $Q_i$ and $Q_{i+j}$ do not alter the crossings of any other interior diameters, we can perturb $Q_i$ and $Q_{i+m}$ in this fashion for all $i \in [j]$ to transform 
\begin{align}
\overline{Q_1Q_{j+1}}, \overline{Q_2Q_{j+2}}, \ldots, \overline{Q_{j}Q_{2j}} \nonumber
\end{align}
into edges with $k$ crossings.

If $r \in [m, 2m-4]$, then we perturb $Q_i$ so that it lies in between the $(r-m+1)^{\text{th}}$ and $(r-m+2)^{\text{th}}$ rays of (\ref{equ:rays3}). Additionally, we perturb $Q_{i+j}$ so that it lies in between $\overrightarrow{P_{i+j}P_{i+j+1}}$ and $\overrightarrow{P_{i+j-1}P_{i+j}}$. This ensures that the set of edges incident to exactly one of $P_i$ and $P_{i+j}$ that cross $\overline{Q_iQ_{i+j}}$ are the edges corresponding to the first $(r-m+1)$ rays of (\ref{equ:rays3}) and the edges corresponding to the first $m-1$ rays of (\ref{equ:rays4}). Thus, the total number of edges crossing $\overline{Q_iQ_{i+j}}$ is indeed
\begin{align}
(m-1)(m-3) + 1 + (r-m+1) + (m-1) = (m-2)^2 + r = k. \nonumber
\end{align}
Once again, because these secondary perturbations of $Q_i$ and $Q_{i+j}$ do not alter the crossings of any other interior diameters, we can perturb $Q_i$ and $Q_{i+m}$ in this fashion for all $i \in [j]$ to transform 
\begin{align}
\overline{Q_1Q_{j+1}}, \overline{Q_2Q_{j+2}}, \ldots, \overline{Q_{j}Q_{2j}} \nonumber
\end{align}
into edges with $k$ crossings.

\noindent\textbf{Case 3.}
If $m = 2j+1$ and $r = 1$, then we start with the previously presented construction consisting of $2m-2$ points for edges with $(m-3)^2 + (m-2)$ crossings\footnote[2]{Because $m-1$ is even and $(m-3)^2 + (m-2) \in \left((m-3)^2, (m-2)^2 \right]$ for $m \ge 3$, it follows that this construction was already covered in Case 1.} and turn it into a configuration consisting of $2m-1$ points that has $\Omega(m)$ edges with $(m-2)^2 + 1$ crossings.

For the sake of readability, we reproduce a description of this previously presented construction. Place $m-1$ equally spaced vertices $P_1, P_2, \ldots, P_{m-1}$ on a small circular arc in clockwise order and construct $m-1$ vertices $Q_1, Q_2, \ldots, Q_{m-1}$ such that $Q_i$ is very close to $P_i$ for $i \in [m-1]$. Henceforth, take all indices modulo $m-1$. For all $i \in [j]$, we know $Q_i$ lies in between $\overrightarrow{P_iP_{i+j}}$ and $\overrightarrow{P_{i}Q_{i+j-1}}$, while $Q_{i+j}$ lies in between $\overrightarrow{P_{i+j}P_{i+j+1}}$ and $\overrightarrow{P_{i+j-1}P_{i+j}}$. 

\begin{figure}[H]
\begin{center}
\includegraphics[scale = 0.7]{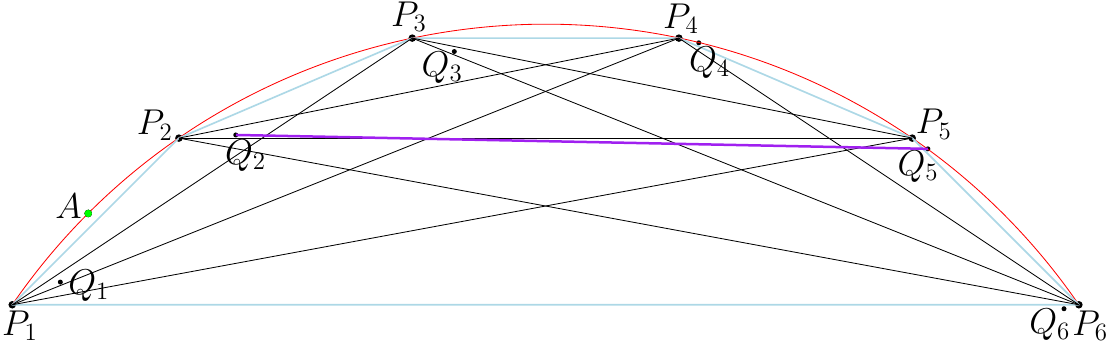}
  \caption{A configuration of $13$ points for $m = 7$ and $k = 26$.}
  \label{fig14}
\end{center}
\end{figure}

Now, we add a vertex $A$ at the midpoint of the arc between $P_1$ and $P_2$ and consider how many edges incident to $A$ cross the $j-2$ interior diameters
\begin{align}
\overline{Q_2Q_{j+2}}, \overline{Q_3Q_{j+3}}, \ldots, \overline{Q_{j-1}Q_{2j-1}}. \nonumber
\end{align}
First of all, for each $i \in [2, j-1]$, the $m-2$ points on the opposite side of $\overleftrightarrow{Q_iQ_{i+j}}$ with respect to $A$ are $P_{i+1}, Q_{i+1}, P_{i+2}, Q_{i+2}, \ldots, P_{i+j-1}, Q_{i+j-1}, P_{i+j}$, and the $m-3$ edges
\begin{align}
\overline{AP_{i+1}}, \overline{AQ_{i+1}}, \overline{AP_{i+2}}, \overline{AQ_{i+2}}, \ldots, \overline{AP_{i+j-1}}, \overline{AQ_{i+j-1}} \nonumber
\end{align}
clearly cross $\overline{Q_iQ_{i+j}}$. Next, we focus on $\overline{AP_{i+j}}$. For any $i \in [2, j-1]$, it is easy to see $A$ lies in between $\overrightarrow{P_{i+j}P_i}$ and $\overrightarrow{P_{i+j}P_{i+j+1}}$. The characterizations of $Q_i$ and $Q_{i+j}$ given above imply that the angle $\angle Q_iP_{i+j}Q_{i+j}$ fully contains $\angle P_iP_{i+j}P_{i+j+1}$, whence $A$ lies in between $\overrightarrow{P_{i+j}Q_i}$ and $\overrightarrow{P_{i+j}Q_{i+j}}$. This means $\overline{AP_{i+j}}$ does indeed cross $\overline{Q_iQ_{i+j}}$, so the number of edges crossing each of the $j-2$ interior diameters
\begin{align}
\overline{Q_2Q_{j+2}}, \overline{Q_3Q_{j+3}}, \ldots, \overline{Q_{j-1}Q_{2j-1}} \nonumber
\end{align}
is precisely $(m-3)^2 + (m-2) + (m-2) = (m-2)^2 + 1$, as required. See Figure~\ref{fig14}.

\noindent\textbf{Case 4.} 
Lastly, we consider $m = 2j+1$ and $r = 2m-3$, which is equivalent to $k = (m-1)^2$. Here, it suffices to place $2m$ vertices $P_1, P_2, \ldots, P_{2m}$ on a circular arc in clockwise order, as in this case each of
\begin{align}
\overline{P_1P_{m+1}}, \overline{P_2P_{m+2}}, \ldots, \overline{P_mP_{2m}} \nonumber
\end{align}
has exactly $k = (m-1)^2$ crossings.

Thus, we have covered all cases where $n$ is even and $k \in \left( (2j-1)^2, (2j)^2 \right]$ for some $j \in \mathbb{Z}^+$.

\noindent\textbf{Case 5.} 
Now, assume that $n$ odd and $k \in \left[1, \left( \frac{n-3}{2} \right)^2 \right]$. Because $2m < n$ holds for $k$ in this interval, we can utilize the appropriate constructions presented above to once again obtain configurations of $2m-1$ or $2m$ points containing $\Omega(m)$ edges with $k$ crossings. 

For all $n$ and $k$ already addressed above, we now describe how $t = \left\lfloor \frac{n}{2m} \right\rfloor$ copies of this configuration of $2m$ or $2m-1$ points can be arranged together so that no additional edges in the drawing interfere with the interior diameters of the individual configurations. Place points $A_1, B_1, A_2, B_2, \ldots, A_t, B_t$ on a circle $C$ in clockwise order (we remark that none of these $2t$ points will be vertices of ${\mathcal{D}}_n$). For each $\overline{A_iB_i}$, we construct a minor circular arc $c_i$ containing $A_i$ and $B_i$ that is fully contained inside $C$, as seen in Figure~\ref{fig15}, and create a configuration of $2m-1$ or $2m$ points using $c_i$ as the minor circular arc. Finally, place any leftover vertices in a cluster near the center of $C$.

We can make each $c_i$ sufficiently flat so as to ensure that no interior diameter of the configuration based at $c_i$ is intersected by additional edges of the drawing. Thus, each of these interior diameters still has $k$ crossings, and we have that
\begin{align}
e_k(\mathcal D) \ge t \cdot \Omega(m) = \left\lfloor \frac{n}{2m} \right\rfloor \cdot \Omega(m) = \Omega(n) \nonumber.
 \end{align}

\begin{figure}[H]
\begin{center}
\includegraphics[scale = 0.45]{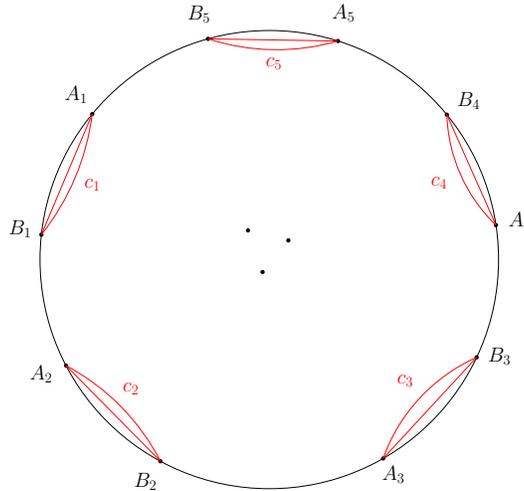}
  \caption{Grouping together $5$ copies of the $2m$-point configuration described above. Some leftover nodes have been placed near the center of the circle.}
  \label{fig15}
\end{center}
\end{figure}

Lastly, we present constructions for $n$ odd and $k \in \left( \left( \frac{n-3}{2} \right)^2, \left\lfloor \left( \frac{n-2}{2} \right)^2 \right\rfloor \right]$, as $2m \le n$ does not hold in this case.

\noindent\textbf{Case 6.} 
If $\frac{n-3}{2}$ is odd, then set $m = \frac{n-1}{2}$ and write $k = (m-2)^2 + 2(m-1) + d$ with $d \in [0, m-2]$.

We start with the previously presented construction from Case 1 consisting of $2m$ points for edges with $(m-2)^2 + (m-1) + d$ crossings, noting that $(m-2)^2 + (m-1) + d \in \left((m-2)^2, (m-1)^2 \right]$. We transform this configuration into one with $2m+1$ points that has $\Omega(m)$ edges with $k = (m-2)^2 + 2(m-1) + d$ crossings. Noting $m$ is even, set $m = 2j$.

\begin{figure}[H]
\begin{center}
\includegraphics[scale = 0.7]{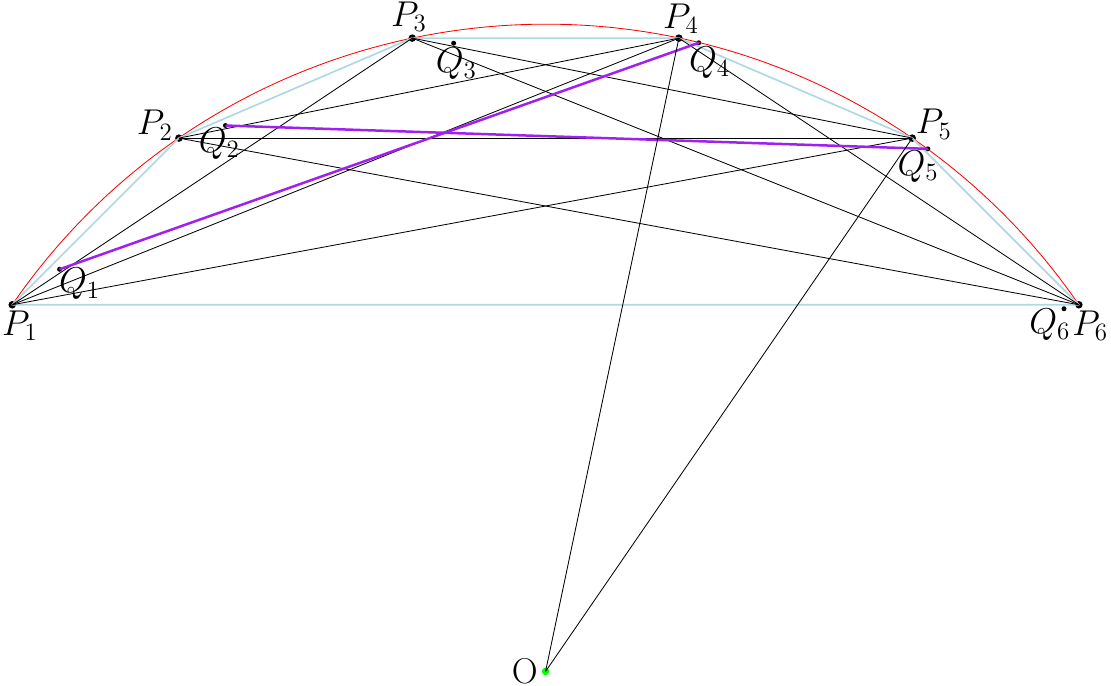}
  \caption{A configuration of $13$ points for $m = 6$ and $k = 28$.}
  \label{fig16}
\end{center}
\end{figure}

We add the center $O$ of the circle containing $P_1,\ldots, P_m$ and consider crossings between edges incident to $O$ and the $j-1$ interior diameters
\begin{align}
\overline{Q_1Q_{j+1}}, \overline{Q_2Q_{j+2}}, \ldots, \overline{Q_{j-1}Q_{2j-1}} \nonumber
\end{align}
from the original construction. For each $i \in [j-1]$, we know that the $m-1$ points on the opposite side of $\overleftrightarrow{Q_iQ_{i+j}}$ with respect to $O$ are $P_{i+1}, Q_{i+1}, P_{i+2}, Q_{i+2}, \ldots, P_{i+j-1}, Q_{i+j-1}, P_{i+j}$. Furthermore, the $m-2$ edges
\begin{align}
\overline{OP_{i+1}}, \overline{OQ_{i+1}}, \overline{OP_{i+2}}, \overline{OQ_{i+2}}, \ldots, \overline{OP_{i+j-1}}, \overline{OQ_{i+j-1}} \nonumber
\end{align}
clearly cross $\overline{Q_iQ_{i+j}}$. Now, we focus on $\overline{OP_{i+j}}$. For any $i \in [j-1]$, it is easy to see that $O$ lies in between $\overrightarrow{P_{i+j}P_i}$ and $\overrightarrow{P_{i+j}P_{i+j+1}}$ since $\angle P_iP_{i+j}P_{i+j+1}$ is obtuse. Now, the characterizations of $Q_i$ and $Q_{i+j}$ given in the previously presented construction imply that $\angle Q_iP_{i+j}Q_{i+j}$ fully contains $\angle P_iP_{i+j}P_{i+j+1}$, whence $O$ lies in between $\overrightarrow{P_{i+j}Q_i}$ and $\overrightarrow{P_{i+j}Q_{i+j}}$. This means $\overline{OP_{i+j}}$ crosses $\overline{Q_iQ_{i+j}}$, so the number of edges crossing each of the $j-1$ interior diameters
\begin{align}
\overline{Q_1Q_{j+1}}, \overline{Q_2Q_{j+2}}, \ldots, \overline{Q_{j-1}Q_{2j-1}} \nonumber
\end{align}
is precisely $\left( (m-2)^2 + (m-1) + d \right) + (m-1) = (m-2)^2 + 2(m-1) + d = k$, as required. See Figure~\ref{fig16}.

\noindent\textbf{Case 7.} 
If $\frac{n-3}{2}$ is even and $k \in \left[ \left( \frac{n-3}{2} \right)^2 + 2, \left\lfloor \left( \frac{n-2}{2} \right)^2 \right\rfloor \right]$, then set $m = \frac{n-1}{2}$ and write $k = (m-2)^2 + (m-1) + m + d$ with $d \in [0, m-3]$. 

We start with the construction from Case 2 consisting of $2m$ points for edges with $(m-2)^2 + (m-1) + d$ crossings, noting that $(m-2)^2 + (m-1) + d \in \left((m-2)^2, (m-1)^2 \right]$, and turn it into a configuration of $2m+1$ points that has $\Omega(m)$ edges with $k = (m-2)^2 + (m-1) + m + d$ crossings. Noting $m$ is odd, set $m = 2j+1$.

Place $m$ equally spaced vertices $P_1, P_2, \ldots, P_m$ on a circular arc in clockwise order and construct $m$ vertices $Q_1, Q_2, \ldots, Q_m$ such that $Q_i$ is sufficiently close to $P_i$ for $i \in [m]$. Henceforth, take all indices modulo $m$. For all $i \in [m]$, we perturb the points so that $Q_i$ lies in between $\overrightarrow{P_iP_{i+j}}$ and $\overrightarrow{P_{i-1}P_i}$. Our goal is to ensure the $j-1$ interior diameters
\begin{align}
\overline{Q_1Q_{j+2}}, \overline{Q_2Q_{j+3}}, \ldots, \overline{Q_{j-1}Q_{2j}} \nonumber
\end{align} 
each have $k$ crossings.

For every $i \in [j+2, 2j]$, we perturb so that $Q_i$ lies in between $\overrightarrow{P_iP_{i+1}}$ and $\overrightarrow{P_{i-1}P_i}$ and $Q_{i+j}$ lies in between the $(d+2)^{\text{th}}$ and $(d+3)^{\text{th}}$ rays of (\ref{equ:rays4}). Using reasoning similar to that from Case 2, we can now deduce that the number of edges from the complete graph on the $2m$ existing vertices that cross each of the $j-1$ interior diameters
\begin{align}
\overline{Q_1Q_{j+2}}, \overline{Q_2Q_{j+3}}, \ldots, \overline{Q_{j-1}Q_{2j}} \nonumber
\end{align}
is precisely
\begin{align}
(m-1)(m-3) + 1 + (m-3) + (d+2) = (m-2)^2 + (m-1) + d. \nonumber
\end{align}

Now, we add the center $O$ of the circle containing $P_1,\ldots,P_n$ and consider crossings between edges incident to $O$ and the $j-1$ interior diameters
\begin{align}
\overline{Q_1Q_{j+2}}, \overline{Q_2Q_{j+3}}, \ldots, \overline{Q_{j-1}Q_{2j}}. \nonumber
\end{align}
For every $i \in [j+2, 2j]$, we know $P_{i+j+1}, Q_{i+j+1}, P_{i+j+2}, Q_{i+j+2}, \ldots, P_{i-1}, Q_{i-1}, P_i$ are the $m$ points on the opposite side of $\overleftrightarrow{Q_iQ_{i+j}}$ with respect to $O$. Furthermore, the $m-1$ edges
\begin{align}
\overline{OP_{i+j+1}}, \overline{OQ_{i+j+1}}, \overline{OP_{i+j+2}}, \overline{OQ_{i+j+2}}, \ldots, \overline{OP_{i-1}}, \overline{OQ_{i-1}} \nonumber
\end{align}
clearly cross $\overline{Q_iQ_{i+j}}$. Next, we focus on $\overline{OP_i}$. For any $i \in [j+2, 2j]$, it is easy to see that $O$ lies in between $\overrightarrow{P_iP_{i+j}}$ and $\overrightarrow{P_iP_{i+1}}$ since $\angle P_{i+j}P_iP_{i+1}$ is obtuse. Now, the characterizations of $Q_i$ and $Q_{i+j}$ presented in this case imply that $\angle Q_{i+j}P_iQ_i$ fully contains $\angle P_{i+j}P_iP_{i+1}$, whence $O$ lies in between $\overrightarrow{P_iQ_{i+j}}$ and $\overrightarrow{P_iQ_i}$. This means $\overline{OP_i}$ does indeed cross $\overline{Q_iQ_{i+j}}$, so the number of edges crossing each of the $j-1$ interior diameters
\begin{align}
\overline{Q_1Q_{j+1}}, \overline{Q_2Q_{j+2}}, \ldots, \overline{Q_{j-1}Q_{2j-1}} \nonumber
\end{align}
is precisely $\left( (m-2)^2 + (m-1) + d \right) + m = k$, as required.

\noindent\textbf{Case 8.} 
If $\frac{n-3}{2}$ is even and $k = \left( \frac{n-3}{2} \right)^2 + 1$, we set $m = \frac{n-1}{2}$, which means $k = (m-1)^2 + 1$. 

We take the previously presented construction from Case 2 consisting of $2m$ points for edges with $(m-2)^2 + m$ crossings and turn it into a configuration of $2m+1$ points that has $\Omega(m)$ edges with $k = (m-1)^2 + 1$ crossings. Noting $m$ is odd, set $m = 2j+1$.

We add the center $O$ of the circle containing $P_1,\ldots,P_n$ and consider crossings between edges incident to $O$ and the $j$ interior diameters
\begin{align}
\overline{Q_1Q_{j+1}}, \overline{Q_2Q_{j+2}}, \ldots, \overline{Q_jQ_{2j}} \nonumber
\end{align}
from the previously presented construction. For each $i \in [j]$, we know the $m-2$ points on the opposite side of $\overleftrightarrow{Q_iQ_{i+j}}$ with respect to $O$ are $P_{i+1}, Q_{i+1}, P_{i+2}, Q_{i+2}, \ldots, P_{i+j-1}, Q_{i+j-1}, P_{i+j}$. Furthermore, the $m-3$ edges
\begin{align}
\overline{OP_{i+1}}, \overline{OQ_{i+1}}, \overline{OP_{i+2}}, \overline{OQ_{i+2}}, \ldots, \overline{OP_{i+j-1}}, \overline{OQ_{i+j-1}} \nonumber
\end{align}
clearly cross $\overline{Q_iQ_{i+j}}$. Now, we focus on $\overline{OP_{i+j}}$. For any $i \in [j]$, it is easy to see that $O$ lies in between $\overrightarrow{P_{i+j}P_i}$ and $\overrightarrow{P_{i+j}P_{i+j+1}}$ since $\angle P_iP_{i+j}P_{i+j+1}$ is obtuse. Now, the characterizations of $Q_i$ and $Q_{i+j}$ given in the previously presented construction imply that $\angle Q_iP_{i+j}Q_{i+j}$ fully contains $\angle P_iP_{i+j}P_{i+j+1}$, whence $O$ lies in between $\overrightarrow{P_{i+j}Q_i}$ and $\overrightarrow{P_{i+j}Q_{i+j}}$. This means that $\overline{OP_{i+j}}$ crosses $\overline{Q_iQ_{i+j}}$, so the number of edges crossing each of the $j$ interior diameters
\begin{align}
\overline{Q_1Q_{j+1}}, \overline{Q_2Q_{j+2}}, \ldots, \overline{Q_jQ_{2j}} \nonumber
\end{align}
is precisely $\left( (m-2)^2 + m \right) + (m-2) = (m-1)^2 + 1 = k$, as desired.

Thus, we have also covered $k \in \left( \left( \frac{n-3}{2} \right)^2, \left\lfloor \left( \frac{n-2}{2} \right)^2 \right\rfloor \right]$ for $n$ odd, exhausting all required cases.
\end{proof}

\end{document}